\DeclareMathOperator{\dif}{\text{\normalfont d}}
\DeclareMathOperator{\leng}{Length}
\DeclareMathOperator{\sing}{Sing}
\DeclareMathOperator{\diff}{Diff}
\DeclareMathOperator{\area}{Area}
\DeclareMathOperator{\FS}{FS}
\newcommand{\CC}{\mathbb{C}}
\newcommand{\A}{\mathbf{A}}
\newcommand{\D}{\mathbb{D}}
\newcommand{\K}{\mathsf{K}}
\newcommand{\PP}{\mathbb{P}^1}
\def\log{\mathrm{log}\,}
\theoremstyle{plain}
\newtheorem{thm}{Theorem}[section]
\newtheorem{obs}[thm]{{Observation}} 
\newtheorem{lem}[thm]{{Lemma}}
\newtheorem{pro}[thm]{Proposition}
\newtheorem{ques}[thm]{{Question}}
\theoremstyle{remark}
\newtheorem{rmk}[thm]{Remark}
\numberwithin{equation}{section}
\theoremstyle{plain}
\newcommand{\thistheoremname}{}
\newtheorem*{genericthm*}{\thistheoremname}
\newenvironment{namedthm*}[1]{\renewcommand{\thistheoremname}{#1}%
	\begin{genericthm*}}
	{\end{genericthm*}}
\newtheoremstyle{named}{}{}{\itshape}{}{\bfseries}{.}{.5em}{\thmnote{#3's }#1}
\theoremstyle{named}
\newcommand\thankssymb[1]{\textsuperscript{\@fnsymbol{#1}}}
\begin{document} 
	\title[On Ahlfors currents]{On Ahlfors currents}

	\subjclass[2010]{32H30, 32Q45, 32U40, 32C30, 30D35}
	\keywords{Nevanlinna theory, entire curves, Ahlfors currents, Weierstrass canonical product, Jensen formula, Siu's decomposition}
	
	\author{Dinh Tuan Huynh}
	
	\address{Hua Loo-Keng center for Mathematical Sciences, Academy of Mathematics and System Science, Chinese Academy of Sciences, Beijing 100190, China \& Department of Mathematics, University of Education, Hue University, 34 Le Loi St., Hue City, Vietnam}
	\email{dinhtuanhuynh@hueuni.edu.vn}

	\author{Song-Yan Xie }
	\address{Academy of Mathematics and System Science \& Hua Loo-Keng Key Laboratory
		of Mathematics, Chinese Academy of Sciences, Beijing 100190, China}
	\email{xiesongyan@amss.ac.cn}

\begin{abstract}
	We answer a basic question in Nevanlinna theory that Ahlfors  currents associated to the same entire curve may be {\em nonunique}. 
Indeed, we will construct  one
 exotic entire curve $f: \mathbb{C}\rightarrow X$ which produces infinitely many cohomologically different Ahlfors currents. Moreover, concerning Siu's decomposition, for 
 an arbitrary $k\in \mathbb{Z}_{+}\cup \{\infty\}$, some of the obtained Ahlfors currents have singular parts supported on $k$ irreducible curves.
 In addition, they can have {\em nonzero} diffuse parts as well.
Lastly, we provide new examples of diffuse Ahlfors currents on the product of two elliptic curves and on $\mathbb{P}^2(\CC)$, and we show  cohomologically elaborate Ahlfors currents on blow-ups of $X$.

\end{abstract}

\maketitle

\section{\bf Introduction}

Let $X$ be a compact complex  manifold equipped with an area form $\omega$. Let $f:\mathbb{C}\longrightarrow X$ be a
nonconstant entire holomorphic curve. An associated {\sl Ahlfors current} of $f$ is a positive closed current of bidimension $(1,1)$ obtained as the weak limit of a certain  sequence of positive currents of bounded masses

\begin{equation*}
\label{radius sequence}
\bigg\{
\dfrac{[f(\mathbb{D}_{r_n})]}{\area_{\omega}f(\mathbb{D}_{r_n})}
\bigg\}_{n\geqslant 1},
\end{equation*}
where $\mathbb{D}_{r_n}$ are discs of increasing radii $r_n\nearrow \infty$ centered at the origin. Here, to ensure that such a limit current is closed, the sequence $\{r_n\}$ is chosen in such a way that the lengths of boundaries of the  discs are asymptotically negligible compared with their areas, namely
\[
\lim_{n\rightarrow\infty}
\dfrac{\leng_{\omega}(f(\partial\mathbb{D}_{r_n}))}{\area_{\omega}(f(\mathbb{D}_{r_n}))}
=
0.
\]
By Ahlfors' lemma (c.f. \cite{brunella1999, Nevanlinna1970}),
for each positive number $\epsilon>0$, the set
\[
\bigg\{
r> 0:\,\dfrac{\leng_{\omega}(f(\partial\mathbb{D}_{r}))}{\area_{\omega}(f(\mathbb{D}_{r}))}
\geqslant \epsilon
\bigg\}
\]
is of finite  measure with respect to $\frac{\dif r}{r}$. Hence the above {\it ``length-area'' condition} is satisfied for most choices of increasing radii. Moreover, given a sequence  of radii $\{r_n\}_{n\geqslant 1}$ with $r_n\nearrow\infty$, 
after some small perturbation by scaling and extracting a subsequence,
one can always obtain an Ahlfors current for $f$.

Ahlfors currents and their analogs obtained by taking the logarithmic average $\int\frac{
\dif t}{t}(\cdot)$, called {\sl Nevanlinna currents}, are fundamental tools in studying  complex hyperbolicity, value distribution theory and complex dynamical systems. Notably, they played  a crucial role in the work McQuillan \cite{Mcquillan1998} on Green-Griffiths'
conjecture for algebraic surfaces of general type having positive Segre class (see also~\cite{brunella1999} for a simplified proof by Brunella). By employing Ahlfors currents,
Duval \cite{Duval2008} gave a quantitative version of the classical Brody's Lemma and obtained a characterization of complex hyperbolicity in terms of linear isoperimetric inequality for holomorphic discs. Using such currents, some geometric refinement of the classical Cartan's Second Main Theorem \cite{Duval-Huynh2018}, as well as the high dimensional Weierstrass-Casorati Theorem \cite{Huynh-Vu2020} were obtained. The reader is also referred to \cite{Dinh-Sibony2018} for recent key applications in complex dynamical systems. 

Since Ahlfors currents and Nevanlinna currents encode geometric information of their original entire curves, several results in value distribution theory can be presented in terms of intersections of corresponding cohomology classes. For example, the First Main Theorem of Nevanlinna theory can be expressed as an inequality between the algebraic intersection and the geometric intersection (c.f.~\cite{Duval-Huynh2018}).

 Note that in certain specific situations, Ahlfors currents (or Nevanlinna currents) from some holomorphic curve are unique \cite{Dinh-Sibony2014, Duval-Huynh2018, Dinh-Sibony2018, Dinh-Vu2020}, which subsequently leads to several interesting results.
Therefore, it is natural and  fundamental to ask generally

\begin{ques}
	\label{question 1}
	Are all Ahlfors currents associated to the same entire curve cohomologically equivalent?
\end{ques}

The study of such currents  is itself of independent interest. By Siu's decomposition Theorem~\cite{Siu1974}, an Ahlfors current $T$ can be written as the sum $T=T_{\sing}+T_{\diff}$, where the singular part $T_{\sing}=\sum_{\ell\in I}c_{\ell}\cdot[C_\ell]$ is some positive linear combination ($c_\ell> 0$; $I\subset \mathbb{Z}_+$, could be $\varnothing$) of
currents of integration on irreducible algebraic curves $C_\ell$, and where the diffuse part $T_{\diff}$ is a positive closed $(1,1)$--current having zero Lelong number along any algebraic curve. If the singular part $T_{\sing}$ is nontrivial, Duval~\cite{Duval2006} showed that  any irreducible curve $C_\ell$ above must be rational or elliptic (see also~\cite{Duval2017} for a local version). In \cite{dacosta2013}, da Costa gave an example of entire curve in the projective plane whose associated Ahlfors current is supported in some line. This construction can be modified to produce Ahlfors currents  supported on a rational or an elliptic curve \cite[Theorem 2.6.1]{Huynh2016}. On the other hand, we would like to mention the following unsolved question, which had been considered  by
Brunella~\cite[page~200]{brunella1999}. 
\begin{ques}
Is there any Ahlfors current from an entire curve such that both of its singular part and diffuse part are nontrivial?
\end{ques}

In this paper, we answer the above two questions by constructing explicit examples.

	\begin{thm}
		\label{thm 1}
		There exists an entire curve producing 
		cohomologically different
		Ahlfors currents.
	\end{thm}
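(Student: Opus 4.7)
The plan is to work with $X = \mathbb{P}^1 \times \mathbb{P}^1$, whose cohomology $H^{1,1}(X,\mathbb{R})$ is freely generated by the two fibre classes $[F_1]=[\mathbb{P}^1 \times \{0\}]$ and $[F_2]=[\{0\} \times \mathbb{P}^1]$, and to construct an entire curve $f(z) = (\phi_1(z), \phi_2(z))\colon \mathbb{C} \to X$ from two Weierstrass canonical products $\phi_1, \phi_2$ whose zero sets are interleaved along concentric annuli, so that the image of $f$ oscillates between tubular neighbourhoods of $F_2$ (where $\phi_1 \approx 0$) and of $F_1$ (where $\phi_2 \approx 0$). One then isolates two families of good radii along which the Ahlfors construction produces limit currents proportional to $[F_2]$ and $[F_1]$ respectively, whence cohomologically distinct.

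\textbf{Construction of $\phi_1, \phi_2$.} Choose a tower-exponentially growing sequence $1 \ll \rho_1 < \rho_2 < \cdots$, and split the radial axis into alternating \emph{red} annuli $A_n = \{\rho_{2n} \leq |z| \leq \rho_{2n+1}\}$ and \emph{blue} annuli $B_n = \{\rho_{2n+1} \leq |z| \leq \rho_{2n+2}\}$. I would define $\phi_1$ as the canonical product having $k_n$ zeros equidistributed in argument on some circle inside each $A_n$, and $\phi_2$ symmetrically with zeros in the $B_n$, choosing $k_n$ large enough that the minimum modulus of $\phi_1$ is forced to be tiny throughout a subset of dominant planar area in $A_n$ (and symmetrically for $\phi_2$ on $B_n$). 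The averaged Jensen formula
\[
\frac{1}{2\pi}\int_0^{2\pi}\log|\phi_i(re^{i\theta})|\,d\theta \;=\; \log|\phi_i(0)| + N(r, 0, \phi_i),
\]
combined with pointwise estimates of the Weierstrass factors, controls the minimum modulus and yields the exact accounting of the Nevanlinna characteristic $T(r, \phi_i)$; it shows that on $A_n$ the image $f(A_n)$ sits inside a shrinking tubular neighbourhood of $F_2$ on a subset of large $f^*\omega$-area, and symmetrically on $B_n$ near $F_1$.

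\textbf{Extraction of two Ahlfors currents.} By the Ahlfors lemma recalled in the excerpt, the set of radii failing the length-area condition has finite $\tfrac{\dif r}{r}$-measure; since $\rho_{2n+1}/\rho_{2n}$ can be taken arbitrarily large, one can pick a good radius $r_n \in A_n$ and similarly $s_n \in B_n$. Extracting weakly convergent subsequences gives Ahlfors currents $T_A$ and $T_B$. To separate $[T_A]$ from $[T_B]$ I would pair both against the test classes $[F_1]$ and $[F_2]$ and invoke the First Main Theorem: the pairing $[T_A] \cdot [F_2]$ is the limit of $n(r_n, 0, \phi_1)/\area_\omega f(\mathbb{D}_{r_n})$, which is bounded below since virtually all $\phi_1$-zeros within $\mathbb{D}_{r_n}$ lie in $A_1 \cup \cdots \cup A_n$; whereas $[T_A] \cdot [F_1]$ counts $\phi_2$-zeros, which are absent in $A_n$ and whose predecessors in $B_1, \ldots, B_{n-1}$ become negligible after normalization thanks to the super-fast growth of $\rho_{2n}$. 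The roles of $F_1$ and $F_2$ are swapped for $T_B$, so $[T_A]$ and $[T_B]$ lie along the two independent rays $\mathbb{R}_{>0}[F_2]$ and $\mathbb{R}_{>0}[F_1]$.

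\textbf{Main obstacle.} The principal difficulty is the joint calibration of the three parameters $\rho_n$, $k_n$ and the exponents of the Weierstrass canonical factors, so that simultaneously: (i) $f|_{A_n}$ is pushed into an appropriately small neighbourhood of $F_2$ on a subset of $A_n$ of proportionally large $f^*\omega$-area, ensuring that the mass of $T_A$ is captured by $[F_2]$ rather than lost to a diffuse part or the competing class $[F_1]$; (ii) each $A_n$ retains enough $\tfrac{\dif r}{r}$-measure for Ahlfors' lemma to actually supply a good $r_n$ inside it; and (iii) the accumulated intersection with $F_1$ inherited from the previous blue annuli is dominated by the current contribution of $A_n$ after the area normalization, which forces $\rho_{n+1}$ to grow tower-exponentially in $\rho_n$ and $k_{n+1}$ to lie in a suitable window. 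Once this inductive calibration is in place, the claimed cohomological separation of $[T_A]$ and $[T_B]$ follows from a direct Jensen-formula bookkeeping, while the very existence of the Ahlfors limits is a formal consequence of Ahlfors' lemma plus weak compactness of positive currents of bounded mass.
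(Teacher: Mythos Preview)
Your approach on $\mathbb{P}^1\times\mathbb{P}^1$ is quite different from the paper's (which builds a ruled surface over an elliptic curve and uses a section of exponential growth so that the curve \emph{clusters by default} to one horizontal section $\mathcal{C}_\infty$, with zeros of a Weierstrass product producing controlled excursions toward prescribed vertical fibres). Your idea of separating $[T_A]$ from $[T_B]$ by pairing against the two fibre classes is sound in principle, but the mechanism you rely on is not.

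The central claim ``placing $k_n$ zeros of $\phi_1$ on a circle in $A_n$ forces $|\phi_1|$ to be tiny on a subset of $A_n$ of dominant area'' is false. With $k$ zeros equidistributed at radius $\rho$ the local factor is exactly $1-(z/\rho)^k$; on $|z|=\rho$ its modulus is $2|\sin(k\theta/2)|$, and already on $|z|=\rho(1+\delta)$ it has size $\approx(1+\delta)^k$. The set $\{|\phi_1|<\epsilon\}$ consists of $k$ tiny disks about the zeros, of total Lebesgue area $O(\epsilon^2)$ and total $\phi_1^*\omega_{\FS}$--area $\approx k\cdot\epsilon^2$, a negligible fraction of the full spherical area $\approx k$ that $\phi_1$ sweeps out on $A_n$. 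So $f(A_n)$ is not concentrated near $F_2$. Relatedly, the pairing $[T_A]\cdot[F_2]$ is not the limit of $n(r_n,0,\phi_1)/\mathrm{Area}$: it equals $\lim A_1(r_n)/(A_1(r_n)+A_2(r_n))$ where $A_i(r)=\int_{\mathbb{D}_r}\phi_i^*\omega_{\FS}=\int_{\mathbb{P}^1} n(r,w,\phi_i)\,d\omega_{\FS}(w)$ is the \emph{unintegrated} Ahlfors--Shimizu characteristic. The First Main Theorem relates the integrated quantities $T$ and $N$, not $A$ and $n$, so your bookkeeping does not transfer. What you actually have to show is that $A_1(r)/A_2(r)$ oscillates between $0$ and $\infty$ along two sequences of radii; for this the decisive (and unaddressed) point is to prove that on the blue annuli $B_n$, where $\phi_1$ has no zeros, $A_1(r)$ increases only negligibly---equivalently, that $|\phi_1|$ is uniformly \emph{large} on $B_n$ so that $\phi_1(B_n)$ sits in a small spherical cap near $\infty$. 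That lower bound on $|\phi_1|$ away from its zeros is exactly the kind of delicate minimum--modulus estimate the paper devotes Proposition~3.4 to in its own setting, and it is the missing ingredient here.
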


 By Siu's decomposition, Ahlfors currents with nontrivial singular parts can be distinguished as different types by the data
 $(|I|\in \mathbb{Z}_+\cup \{\infty\}, T_{\diff} \text{ is trivial / nontrivial})$.
 
\begin{thm}
	\label{thm 2}
	 There exists an entire curve producing all types of Ahlfors currents with nontrivial singular parts. 
	\end{thm}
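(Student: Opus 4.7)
The plan is to upgrade the construction underlying Theorem~\ref{thm 1} so that a \emph{single} entire curve $f : \mathbb{C} \to X$ simultaneously encodes all the prescribed types. Fix an infinite family $\{C_\ell\}_{\ell \geq 1}$ of pairwise disjoint rational or elliptic curves in $X$ (passing to a suitable blow-up if necessary), together with shrinking disjoint tubular neighborhoods $U_\ell \supset C_\ell$. The curve $f$ will be built from Weierstrass canonical products whose zeros are distributed on concentric annuli, so that on each annulus we can dictate whether $f$ concentrates near some $C_\ell$ or performs a diffuse sweep of $X$.

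Choose a rapidly increasing sequence $0 < \rho_0 < \rho_1 < \cdots \nearrow \infty$ and alternately label each $A_\ell = \{\rho_{2\ell-1} \leq |z| \leq \rho_{2\ell}\}$ as a \emph{singular} annulus and each $B_\ell = \{\rho_{2\ell} \leq |z| \leq \rho_{2\ell+1}\}$ as a \emph{diffuse} annulus. On $A_\ell$, mimic the da Costa--Huynh construction locally: arrange $f$ to land in $U_\ell$ and wrap around $C_\ell$ with a huge prescribed degree $d_\ell$, so that $\area_\omega f(A_\ell)$ overwhelms the area accumulated up to $\rho_{2\ell-1}$ by a preassigned multiplicative factor, and so that the normalized current carried by $f(A_\ell)$ is close in mass to $[C_\ell]$. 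On $B_\ell$, arrange $f$ to execute a rescaled Brody-type sweep of $X$ equidistributed with respect to $\omega$, also contributing a preassigned share of the total area. The Jensen formula applied to the underlying Weierstrass products monitors the growth of $T_f(r) = \int_0^r \frac{dt}{t}\,\area_\omega f(\mathbb{D}_t)$, and Ahlfors' lemma supplies, within each annulus, plenty of radii fulfilling the length--area condition.

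Each desired type of Ahlfors current is then realized by a judicious choice of radii $r_n$. For a purely singular current with $k \in \mathbb{Z}_+$ components, take $r_n \in A_{\ell(n)}$ with $\ell(n)\to\infty$ so that exactly $k$ prior singular annuli $A_{\ell_1},\ldots,A_{\ell_k}$ contribute comparable shares while every other annulus contributes negligibly; the weak limit is then $\sum_{j=1}^{k} c_j [C_{\ell_j}]$ with $c_j > 0$. For $k=\infty$, tune $r_n$ so that infinitely many $A_\ell$ contribute summable positive shares. To add a nontrivial diffuse piece, place $r_n$ inside some $B_{\ell(n)}$ and balance the preceding singular annuli against the current diffuse annulus. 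The principal obstacle is twofold: first, enforcing the length--area condition at each prescribed $r_n$, which is handled by a small perturbation within its designated annulus via Ahlfors' lemma; second, certifying that the putative diffuse piece has zero Lelong number along \emph{every} algebraic curve of $X$, not merely along the chosen $C_\ell$. The latter follows from the equidistribution of the sweeping phases on $B_\ell$: the corresponding normalized currents approximate a smooth positive multiple of $\omega$, which cannot concentrate mass on any proper analytic subset in the limit.
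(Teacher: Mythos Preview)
Your high-level strategy---partition $\mathbb{C}$ into annuli, assign each annulus a ``job'' (concentrate near some $C_\ell$ or spread out diffusely), then pick radii to mix the contributions---matches the paper's architecture. But the proposal stops precisely where the real work begins, and as written it contains two genuine gaps.

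\textbf{Gap 1: there is no construction of the entire curve.} You write ``arrange $f$ to land in $U_\ell$ and wrap around $C_\ell$'' on one annulus and ``execute a rescaled Brody-type sweep equidistributed with respect to $\omega$'' on the next, but you never say what $f$ \emph{is}. Holomorphic functions cannot be patched annulus by annulus; you need a single global formula whose behavior you can control. The paper solves this by taking $X=\mathbb{P}(\mathcal{L}_m\oplus\mathbb{C})$ over an elliptic curve $\mathcal{C}$ and defining $f$ via a \emph{single} section $\psi\cdot s_m$, where $\psi$ is one Weierstrass product with zero set $\Lambda$. All the ``different jobs'' are encoded by where the zeros of $\psi$ sit inside each annulus: if many $\lambda\in\Lambda\cap\mathbf{A}_{r_i}$ project to the same point $[y_\ell]\in\mathcal{C}$, the current concentrates on the fiber $\mathbb{P}^1_{[y_\ell]}$; if the projections are spread sparsely, one gets a diffuse contribution. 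The infinitely many disjoint rational curves are the fibers $\mathbb{P}^1_{[y_\ell]}$---no blow-up is needed. Without a comparable global formula, phrases like ``Jensen formula monitors the growth'' have nothing to attach to.

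\textbf{Gap 2: the diffuse part is not verified.} Your claim that the sweeping phases ``approximate a smooth positive multiple of $\omega$'' is not an argument; it is an assertion that the hardest step is already done. In the paper, showing that $T_{\diff}$ has zero Lelong number along every irreducible curve $D\subset X$ requires a case split (horizontal $D\neq\mathcal{C}_\infty$, vertical $D$) and, for horizontal $D$, a nontrivial application of the Argument Principle (Subsection~\ref{estimate area near horizontal curves}) to bound the area of $f(\mathbb{D}_r)$ in a tube around $D$ by $o(r^2)$. The sparse-distribution condition~\eqref{distribute sparsely} on the zeros is what makes the vertical case work. None of this follows from a vague equidistribution principle, and your ``Brody-type sweep'' is not a defined object that one could check these properties for.

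In short: the outline is right, but a proof must specify the entire curve globally and then carry out the area estimates (upper \emph{and} lower, near $\mathcal{C}_\infty$, near each $\mathbb{P}^1_{[y_\ell]}$, and away from all algebraic curves) that the paper performs in Sections~\ref{section prep}--\ref{section: proof}.
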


\begin{rmk}
	The above two results also hold true for Nevanlinna currents, see Subsection~\ref{Singular Nevanlinna currents on X}.
\end{rmk}

Lastly,
it is natural to seek Ahlfors (Nevanlinna) currents with trivial singular part. Examples of such currents
are known to exist on $\mathbb{P}^2(\mathbb{C})$,
 by looking at the Levi-flat real hypersurface
 in $(\mathbb{C}^*)^2$ defined by the equation $|x|=|y|^{\alpha}$, where $\alpha$ is an irrational real number (c.f.~\cite[page~262]{Duval-Berteloot2001}). Indeed, this real hypersurface
 is foliated by entire curves, while its closure in $\mathbb{P}^2(\mathbb{C})$ contains no algebraic curve.
 In \cite{Dinh-Sibony2014}
 there are more examples of  holomorphic curves 
  whose associated Ahlfors  (Nevanlinna) currents are  diffuse and unique. In Section~\ref{section: examples}, we show new examples of diffuse Ahlfors currents on the product of two elliptic curves and on $\mathbb{P}^2(\mathbb{C})$, see Propositions~\ref{diffuse in Abelian surface}, \ref{diffuse Ahlfors currents in Cp2}.

\bigskip

We now outline the ideas and the structure of this paper. As a matter of fact, our source of inspiration is an example of da Costa  \cite{dacosta2013} about a nondegenerate entire curve clustering to a line in $\mathbb{P}^2(\mathbb{C})$ (see also~\cite{Duval-Huynh2018} for more discussions). In Section \ref{construction}, we start with an elliptic curve $\mathcal{C}=\mathbb{C}/\Gamma$ equipped with a negative line bundle $\mathcal{L}$. For some large integer $m\gg 1$, we   construct a section $s_m$ of $\pi_0^*\mathcal{L}^m$ having large exponential growth of order $2$, where $\pi_0:\mathbb{C}\rightarrow\mathcal{C}$ is the canonical projection. The surface $X$ is obtained by taking the geometric projectivization  $\mathbb{P}(\mathcal{L}^m\oplus\mathbb{C})=:X$ of the vector bundle $\mathcal{L}^m\oplus\mathbb{C}$ on $\mathcal{C}$. Thus the section $s_m$ induces a holomorphic map $f_0: \mathbb{C}\rightarrow X$ clustering to the curve $\mathcal{C}_{\infty}$ corresponding to the ``infinity section'' of $\pi_0^*\mathcal{L}^m$.  To generate Ahlfors currents with larger singular supports, we hence modify the original section $s_m$ by multiplying it with a  Weierstrass canonical product
	$
\psi(z)=\prod_{\lambda\in\Lambda}\Big(1-\frac{z}{\lambda}\Big)e^{\frac{z}{\lambda}+\frac{z^2}{2\lambda^2}}
$, whose zero locus $\Lambda$ is distributed in a delicate pattern,
to make sure that the new section $\psi\cdot s_m$   induces an entire curve $f:\mathbb{C}\longrightarrow X$ producing  Ahlfors currents with more singularities. Indeed, for every $\lambda\in\Lambda$, since $\psi\cdot s_m(\lambda)=0$,
$f(\lambda)$ touches the curve $\mathcal{C}_0:=\mathcal{C}\times [0 \oplus 1]$, defined by the zero section of  $\pi_0^*\mathcal{L}^m$, at $([\lambda], [0 \oplus 1])$.

 The idea is that, the image of a small neighborhood of $\lambda\in \Lambda\subset \mathbb{C}$ by $f$ shall contribute moderate area $O(1)$ near the fiber $\mathbb{P}^1_{[\lambda]}\subset X$ over $[\lambda]\in \mathcal{C}$, and once  there are sufficiently many $\lambda'\in \Lambda$ mapping to the same class $[\lambda]$ by $\pi_0$,
the area of the image of $f$ should spend a positive portion about $\mathbb{P}^1_{[\lambda]}$, hence the Ahlfors currents should charge positive mass there. See the picture below for illustration.

\begin{center}
	\scalebox{.85}{\begin{picture}(0,0)%
\includegraphics{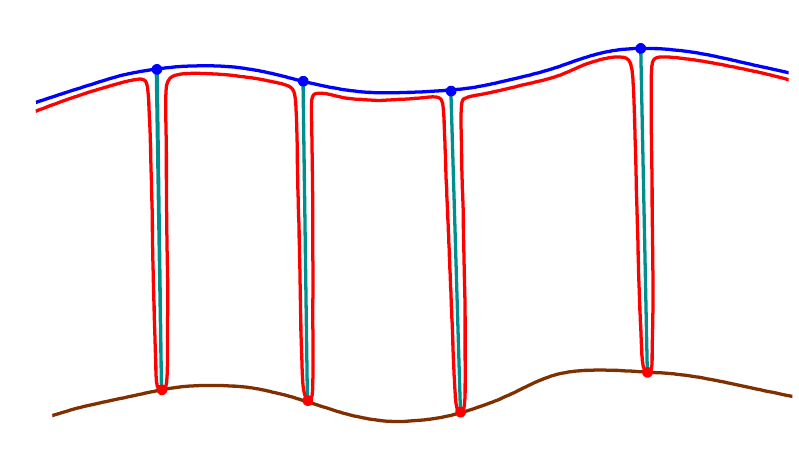}%
\end{picture}%
\setlength{\unitlength}{4144sp}%
\begingroup\makeatletter\ifx\SetFigFont\undefined%
\gdef\SetFigFont#1#2#3#4#5{%
  \reset@font\fontsize{#1}{#2pt}%
  \fontfamily{#3}\fontseries{#4}\fontshape{#5}%
  \selectfont}%
\fi\endgroup%
\begin{picture}(3644,2081)(1205,-2501)
\put(1283,-1059){\makebox(0,0)[lb]{\smash{{\SetFigFont{7}{8.4}{\rmdefault}{\mddefault}{\updefault}{\color[rgb]{1,0,0}$f(\mathbb{C})$}%
}}}}
\put(4225,-1463){\makebox(0,0)[lb]{\smash{{\SetFigFont{7}{8.4}{\familydefault}{\mddefault}{\updefault}{\color[rgb]{0,.56,.56}$\cdots$}%
}}}}
\put(1220,-783){\makebox(0,0)[lb]{\smash{{\SetFigFont{7}{8.4}{\rmdefault}{\mddefault}{\updefault}{\color[rgb]{0,0,1}$\mathcal{C}_{\infty}$}%
}}}}
\put(4096,-2247){\makebox(0,0)[lb]{\smash{{\SetFigFont{7}{8.4}{\rmdefault}{\mddefault}{\updefault}{\color[rgb]{1,0,0}$\cdots$}%
}}}}
\put(4070,-543){\makebox(0,0)[lb]{\smash{{\SetFigFont{7}{8.4}{\rmdefault}{\mddefault}{\updefault}{\color[rgb]{0,0,1}$\cdots$}%
}}}}
\put(1788,-2331){\makebox(0,0)[lb]{\smash{{\SetFigFont{7}{8.4}{\rmdefault}{\mddefault}{\updefault}{\color[rgb]{1,0,0}$([\lambda_1],0)$}%
}}}}
\put(3179,-2432){\makebox(0,0)[lb]{\smash{{\SetFigFont{7}{8.4}{\rmdefault}{\mddefault}{\updefault}{\color[rgb]{1,0,0}$([\lambda_3],0)$}%
}}}}
\put(1726,-651){\makebox(0,0)[lb]{\smash{{\SetFigFont{7}{8.4}{\rmdefault}{\mddefault}{\updefault}{\color[rgb]{0,0,1}$([\lambda_1],\infty)$}%
}}}}
\put(2408,-676){\makebox(0,0)[lb]{\smash{{\SetFigFont{7}{8.4}{\familydefault}{\mddefault}{\updefault}{\color[rgb]{0,0,1}$([\lambda_2],\infty)$}%
}}}}
\put(3043,-718){\makebox(0,0)[lb]{\smash{{\SetFigFont{7}{8.4}{\rmdefault}{\mddefault}{\updefault}{\color[rgb]{0,0,1}$([\lambda_3],\infty)$}%
}}}}
\put(1409,-2413){\makebox(0,0)[lb]{\smash{{\SetFigFont{7}{8.4}{\rmdefault}{\mddefault}{\updefault}{\color[rgb]{.5,.17,0}$\mathcal{C}_{0}$}%
}}}}
\put(2392,-2437){\makebox(0,0)[lb]{\smash{{\SetFigFont{7}{8.4}{\familydefault}{\mddefault}{\updefault}{\color[rgb]{1,0,0}$([\lambda_2],0)$}%
}}}}
\put(2000,-1502){\makebox(0,0)[lb]{\smash{{\SetFigFont{7}{8.4}{\familydefault}{\mddefault}{\updefault}{\color[rgb]{0,.56,.56}$\mathbb{P}^1_{[\lambda_1]}$}%
}}}}
\put(2667,-1515){\makebox(0,0)[lb]{\smash{{\SetFigFont{7}{8.4}{\familydefault}{\mddefault}{\updefault}{\color[rgb]{0,.56,.56}$\mathbb{P}^1_{[\lambda_2]}$}%
}}}}
\put(3362,-1498){\makebox(0,0)[lb]{\smash{{\SetFigFont{7}{8.4}{\familydefault}{\mddefault}{\updefault}{\color[rgb]{0,.56,.56}$\mathbb{P}^1_{[\lambda_3]}$}%
}}}}
\end{picture}%
}
\end{center}

Nevertheless, to make sense of this idea, we need to show, first of all, that  the growth of $\psi$ is neither too rapid nor too slow, which will be accomplished  in Section \ref{section prep}, 
by means of the Stirling formula as well as the symmetry of  the lattice $\Gamma$. Consequently, in Section~\ref{section: estimate area of discs}, we can manipulate Jensen's formula 
to evaluate various  areas, which distinguish the singularities of  the  Ahlfors currents.
  In Section~\ref{section: algorithm}, we present an algorithm for constructing the zero locus $\Lambda$, which is designed for the proofs of the main theorems in Section~\ref{section: proof}.
In Section~\ref{section: examples}, we provide new examples of diffuse Ahlfors currents. Moreover, we show
cohomologically elaborate  Ahlfors currents
on surfaces obtained by blowing-up $X$.

\bigskip\noindent
{\sl Convention:} Throughout this paper,  $\K$  denotes positive numbers which are uniformly bounded from both sides 
$0<K_1<\K<K_2<\infty$.
Further, notation $\K_{\star_1, \star_2, \star_3}$
indicates dependence on parameters $\star_1, \star_2, \star_3$. The notation
$\D(a, r)$ means the disc centered at $a$, in $ \CC$ or in the elliptic curve $ \CC/\Gamma$,
 with the radius $r$. When $a=0\in \mathbb{C}$, we write
  $\D_{r}$ instead of $\D(0, r)$. The differential operator $\dif^c$ is short for $\frac{\sqrt{-1}}{4\pi}(\overline{\partial}-\partial)$.

\vspace*{0.5cm}
\noindent{\bf Acknowledgements.} We would like to address our profound gratitude to Professor Julien Duval for introducing us to the subject and for many fruitful discussions from which we learned rich ideas.
We are grateful to Professor Sibony for sharing the reference~\cite{Dinh-Sibony2014} and for suggesting related problems. We  thank Professor Yusaku Tiba for his question about cohomology classes of Ahlfors currents. We 
thank  Professor Jo\"el Merker, Dr. Duc-Viet Vu and Dr. Ruiran Sun for valuable comments and suggestions. 
S.-Y. Xie is partially supported by  the NSFC Grant No.~11688101.
Part of this paper was written during his visit to Tianyuan Mathematical Center in Southeast China / Xiamen University invited by Professor Chunhui Qiu. He is  grateful to the colleagues there for hospitality and excellent working conditions. D. T. Huynh is
grateful to Academy of Mathematics and Systems Science in Beijing for enhanced scientific ambience. He also wants to acknowledge partial support from the Core Research Program of Hue University, Grant No. NCM.DHH.2020.15.

\section{\bf Construction}
\label{construction}
Fix 
a smooth elliptic curve $\mathcal{C}=\mathbb{C}/\Gamma$, where  the lattice
 $\Gamma:=\mathbb{Z}
\oplus \mathbb{Z}\sqrt{-1}$
is chosen  to affiliate the arguments later.
 We can find a negative line bundle $\mathcal{L}$ on $\mathcal{C}$ equipped with some  hermitian metric $h'$ having strictly negative curvature. Now comparing with the K\"ahler form $\dif\dif^c\big(|z|^2\big)$ on $\mathcal{C}$ descending from the canonical projection $\pi_0:\mathbb{C}\longrightarrow \mathbb{C}/\Gamma$, 
 the curvature of $h'$ is cohomologous to $-2\alpha \dif\dif^c\big(|z|^2\big)$
for some positive constant $\alpha$, namely their difference is of the form $\dif\dif^c\varphi$ for some smooth real function $\varphi$ on $\mathbb{C}/\Gamma$. Therefore, replacing the initial metric $h'$ by $h'e^{\varphi}=:h$, the curvature becomes
$
\Theta_h
=
-2\alpha\, \dif\dif^c\big(|z|^2\big)$.
Noting that the line bundle
$\pi_0^*\mathcal{L}$ on $\mathbb{C} $ is holomorphically trivial,
it has a nowhere vanishing holomorphic section $k$, which by Lelong-Poincar\'e equation satisfies that
$
\dif\dif^c\big(\log\|k\|^2_{\pi_0^*h}\big)
=
2\alpha\,\dif\dif^c\big(|z|^2\big)
$.
Hence $\log\|k\|^2_{\pi_0^*h}-2\alpha\,|z|^2$ is a harmonic function on $\mathbb{C}$, hence can be written as the real part of some holomorphic function $g$. Therefore, the  modified  section
	 $s:=e^{-g/2}\, k$
	 of $\pi_0^*\mathcal{L}$ has exponential growth of order two
	 $\|s\|_{\pi_0^*h}
	 \,
	 =\,
	 \|e^{-g/2}\, k\|_{\pi_0^*h}
	 \,
	 =
	 \,
	 e^{\alpha |z|^2}$. 
	 The above construction is based on an idea of~\cite{dacosta2013}.
	 
	 We now amplify the negativity of $\mathcal{L}$, by
	 introducing
	 $\mathcal{L}_m:=\mathcal{L}^{\otimes m}$ for some big multiplicity $m\geqslant 1$ to be determined. For the metric  $h_m:=h^{\otimes m}$ of $\mathcal{L}_m$, the section $s_m:=s^{\otimes m}$  has large exponential growth
	\begin{equation}
	\label{exponential growth of s}
	\|s_m\|_{h_m}
	\,
	=
	\,
	e^{m\alpha |z|^2}.
	\end{equation}

	Now we introduce the complex surface $X:=\mathbb{P}(\mathcal{L}_m\oplus \mathbb{C})$ obtained by the geometric projectivization of the rank $2$ vector bundle $\mathcal{L}_m\oplus \mathbb{C}$
	over $\mathcal{C}$. Denote by $\pi_1: X\longrightarrow \mathcal{C}$ the canonical projection.
	By 
	 the fiberwised identification $\mathcal{L}_m\cong \mathcal{L}_m\oplus 1\subset \mathbb{P}(\mathcal{L}_m\oplus \mathbb{C})$,
	 the tautological space of $\mathcal{L}_m$ can be embedded into $X
	$ as an open subset, whose complement is the elliptic curve $\mathcal{C}_{\infty}:=\mathcal{C}\times [1 \oplus 0]\subset \mathbb{P}(\mathcal{L}_m\oplus \mathbb{C})$ at ``infinity''.

Next, we introduce an auxiliary holomorphic function 
	\[
	\psi(z):=\prod_{\lambda\in\Lambda}\Big(1-\frac{z}{\lambda}\Big)e^{\frac{z}{\lambda}+\frac{z^2}{2\lambda^2}}
	\]
	obtained by Weierstrass canonical product,
	 where the zero locus $\Lambda$ will be chosen carefully by the following sophisticated reasoning, to make sure that the global section $\psi\cdot s_m$ 
	 of $\pi_0^*\mathcal{L}_m$ together with the inclusion $\iota: \mathcal{L}_m\hookrightarrow X$ 
	 induce an entire curve $f: \mathbb{C}\longrightarrow X$ producing complicated Ahlfors currents.

 First of all, we  would like to have the estimate $\log |\psi(z)|\leqslant O(|z|^2)$,
	 at least for $|z|$ around $r_i$ for some specific radii $r_i\nearrow \infty$, in order to bound the area of ${f}(\mathbb{D}_{r_i})$
	by $O(r_i^2)$. 
	 
	 Secondly, we require that the cardinality $ |\Lambda\cap \mathbb{D}_{r_i}|=O(r_i^2)$, so that the image  ${f}(\mathbb{D}_{r_i})$ intersects the curve $\mathcal{C}_0:=\mathcal{C}\times [0 \oplus 1]\subset X$ defined by the zero section of $\mathcal{L}_m$ frequently enough. 
	 
	 Lastly, we require that each time when the image of the entire curve $f$ intersects $\mathcal{C}_0$ for  $\lambda\in \Lambda$ with $|\lambda|\gg1$, it  contributes  $O(1)$ area near the fiber $\mathbb{P}^1_{[\lambda]}:=\pi_1^{-1}([\lambda])$.

	\smallskip  
Thus we declare that
\begin{enumerate}
\item[(i)]
near each  annulus $\mathbf{A}_{r_i}:=\{z\in\mathbb{C}: \frac{r_2}{2}\leqslant |z|\leqslant r_i\}$, the zero locus $\Lambda$ is a mild perturbation of $ \mathbf{A}_{r_i}\cap c\,\Gamma$, 	where $c\geqslant 5$ is some positive integer to be determined. More precisely
\[
\Lambda=\cup_{i
\geqslant  1}
	\,
B_{r_i}
\qquad
\text{where}\qquad
{B}_{r_i}
:=
\cup_{\mu\in \mathbf{A}_{r_i}\cap c\Gamma}\,
\{\mu+x_{\mu}\}.
\]
Here at the moment we  only tell that all $x_{\mu}$'s	take values in the fundamental domain 
\[
 \mathcal{D}:=\{x+y\sqrt{-1}\,:\,
0\leqslant x, y<1
\},
\]
and later in Section~\ref{section: algorithm},	we will elaborate on the choices of $x_{\mu}$'s for delicate reasons.
\smallskip
	 	
\item[(ii)] 
$\{r_i\}_{i\geqslant 1}$ grow very rapidly, say 
\begin{equation}
	\label{r_i grow rapid}
r_1\geqslant 2020\cdot c,
\qquad
 {r_{i+1}}\geqslant {r_i}^4 
 \quad
 {\scriptstyle
 	(\forall\, i\,\geqslant\,1)
 }.
\end{equation}
\end{enumerate}

\section{\bf Preparations}	 
\label{section prep}
\begin{lem}
	\label{lemma 1}
	One has a uniform estimate
	$
	\vert
	\sum_{\lambda\in B_{r_i}}\,
	\frac{1}{\lambda}
	\vert 
	\leqslant
	\K/{c^2}
	$
	for all $i=1, 2, \dots$.
	\end{lem}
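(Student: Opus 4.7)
The plan is to Taylor-expand $\frac{1}{\mu+x_\mu}$ about $\frac{1}{\mu}$, exploit the reflection symmetry $z\mapsto -z$ shared by the lattice $c\Gamma$ and the annulus $\mathbf{A}_{r_i}$ to eliminate the leading term, and then absorb the remaining tail into a standard annular lattice sum of size $\K/c^2$.

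First, since $c\Gamma$ and $\mathbf{A}_{r_i}$ are both preserved by the antipodal map $\mu\mapsto-\mu$, pairing each lattice point with its opposite yields
\[
\sum_{\mu\in c\Gamma\cap\mathbf{A}_{r_i}}\frac{1}{\mu}\;=\;0.
\]
Second, the hypothesis $r_1\geqslant 2020\,c$ together with $|x_\mu|\leqslant\sqrt{2}$ (since $x_\mu\in\mathcal{D}$) ensures $|x_\mu/\mu|<1/2$ throughout the annulus, so the geometric expansion
\[
\frac{1}{\mu+x_\mu}\;=\;\frac{1}{\mu}\,-\,\frac{x_\mu}{\mu^{2}}\,+\,\frac{x_\mu^{2}}{\mu^{3}}\,-\,\cdots
\]
converges absolutely. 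Summing over $\mu\in c\Gamma\cap\mathbf{A}_{r_i}$ and discarding the vanishing $k=0$ contribution gives
\[
\Bigl|\sum_{\lambda\in B_{r_i}}\frac{1}{\lambda}\Bigr|\;\leqslant\;\sum_{k\geqslant 1}(\sqrt{2})^{k}\sum_{\mu\in c\Gamma\cap\mathbf{A}_{r_i}}\frac{1}{|\mu|^{k+1}}.
\]

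Third, comparing the annular lattice sum with the corresponding area integral (using that $c\Gamma$ has density $1/c^{2}$ in $\CC$) produces
\[
\sum_{\mu\in c\Gamma\cap\mathbf{A}_{r_i}}\frac{1}{|\mu|^{k+1}}\;\leqslant\;\frac{\K}{c^{2}}\int_{r_i/2}^{r_i}\frac{dr}{r^{k}}\;\leqslant\;\frac{\K_{k}}{c^{2}},
\]
with $\K_1$ a fixed logarithmic constant and $\K_k$ decaying in $r_i$ for $k\geqslant 2$. Plugging back into the Taylor sum and summing the (rapidly convergent) geometric series in $k$ yields the advertised uniform bound $\K/c^{2}$.

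The main obstacle is that, because the perturbations $x_\mu$ may be arbitrary elements of $\mathcal{D}$, no further orbit-cancellation is available for the correction term $\sum x_\mu/\mu^{2}$; the argument therefore hinges entirely on the annular structure of $\mathbf{A}_{r_i}$ forcing $\sum 1/|\mu|^{2}$ to stay bounded by $\K/c^{2}$ uniformly in $i$, and on the smallness of $1/\mu$ guaranteed by $|\mu|\geqslant r_1/2\gg c$.
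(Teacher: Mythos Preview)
Your proof is correct and follows essentially the same approach as the paper: kill the leading term via the antipodal symmetry $\mu\mapsto-\mu$ of $c\Gamma\cap\mathbf{A}_{r_i}$, then bound the perturbation. The paper does the second step more bluntly, writing $\bigl|\frac{1}{\mu+x_\mu}-\frac{1}{\mu}\bigr|\leqslant\K/r_i^{2}$ directly and multiplying by the cardinality $|B_{r_i}|\leqslant\K\,r_i^{2}/c^{2}$; your full Taylor expansion plus integral comparison reaches the same bound with more machinery than is strictly needed.
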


\begin{proof}
	In the special case that all $x_{\mu}=0$, by the
	 symmetry of $\Gamma$ that $(-1)\cdot \Gamma=\Gamma$
	 and that of $\mathbf{A}_r$,
	 the sum 
	 $\sum_{\mu\in \mathbf{A}_{r_i}\cap c\Gamma}\,\frac{1}{\mu+0}$
	 is always $0$. 

	 In general,
	  for every $\mu\in \mathbf{A}_{r_i}
	 \cap c\Gamma$, one has the estimate $|\frac{1}{\mu+x_{\mu}}-\frac{1}{\mu+0}|\leqslant \K/r_i^2$.
	 Noting that the cardinality $ |B_{r_i}|\leqslant
	 \K\cdot (r_i/c)^2$, we conclude that
	 $\vert
	 \sum_{\lambda\in B_{r_i}}\,
	 \frac{1}{\lambda}
	 \vert
	 \leqslant
	 \K\cdot (r_i/c)^2
	 \cdot
	 \K/r_i^2
	 =
	 \K
	 /
	 c^2
	 $.
	 \end{proof}

\begin{lem}
	\label{lemma 2}
	One has a uniform estimate
	$
	\vert
	\sum_{\lambda\in B_{r_i}}\,
	\frac{1}{\lambda^2}
	\vert 
	\leqslant
	\K/{(c^2\,r_i)}
	$
	 for all $i=1, 2, \dots$.
\end{lem}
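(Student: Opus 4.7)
The plan is to mimic the proof of Lemma \ref{lemma 1} but exploit a stronger symmetry of $\Gamma$ and a sharper size estimate for the perturbation. I first handle the unperturbed case where all $x_{\mu}=0$. Since the lattice $\Gamma=\mathbb{Z}\oplus\mathbb{Z}\sqrt{-1}$ is invariant not only under $\mu\mapsto -\mu$ but also under the rotation $\mu\mapsto \sqrt{-1}\,\mu$, and since the annulus $\mathbf{A}_{r_i}$ is rotation-invariant as well, every nonzero point $\mu\in\mathbf{A}_{r_i}\cap c\Gamma$ sits in a four-element orbit $\{\mu,\sqrt{-1}\mu,-\mu,-\sqrt{-1}\mu\}\subset \mathbf{A}_{r_i}\cap c\Gamma$. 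On such an orbit the sum telescopes:
\[
\frac{1}{\mu^{2}}+\frac{1}{(\sqrt{-1}\mu)^{2}}+\frac{1}{(-\mu)^{2}}+\frac{1}{(-\sqrt{-1}\mu)^{2}}=\frac{1}{\mu^{2}}\bigl(1-1+1-1\bigr)=0,
\]
so $\sum_{\mu\in\mathbf{A}_{r_i}\cap c\Gamma}\frac{1}{\mu^{2}}=0$. This is the analogue of the vanishing that drives Lemma \ref{lemma 1}, but now it rests on the full $\mathbb{Z}[\sqrt{-1}]$-symmetry of $\Gamma$; the simple $\pm$-symmetry used before is inadequate because $\frac{1}{\mu^{2}}+\frac{1}{(-\mu)^{2}}=\frac{2}{\mu^{2}}\neq 0$.

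Next I estimate the perturbation. For each $\mu\in\mathbf{A}_{r_i}\cap c\Gamma$ with $x_\mu\in\mathcal{D}$ one has $|x_\mu|\leqslant\sqrt{2}$ and, since $|\mu|\geqslant r_i/2\gg 1$ (using \eqref{r_i grow rapid}), also $|\mu+x_\mu|\geqslant |\mu|/2$. A direct expansion gives
\[
\frac{1}{(\mu+x_\mu)^{2}}-\frac{1}{\mu^{2}}=\frac{-2\mu x_\mu-x_\mu^{2}}{\mu^{2}(\mu+x_\mu)^{2}},
\]
whose modulus is bounded by $\mathsf{K}/|\mu|^{3}\leqslant \mathsf{K}/r_i^{3}$. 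Summing over the $|B_{r_i}|\leqslant \mathsf{K}\cdot(r_i/c)^{2}$ lattice points yields a total perturbation of size at most $\mathsf{K}(r_i/c)^{2}\cdot \mathsf{K}/r_i^{3}=\mathsf{K}/(c^{2}r_i)$. Combined with the vanishing of the unperturbed sum, this gives the announced bound.

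The only mild obstacle is verifying the cancellation cleanly when $\mathbf{A}_{r_i}$ is a closed annulus: one should ensure the $\sqrt{-1}$-action permutes $\mathbf{A}_{r_i}\cap c\Gamma$ bijectively, which is immediate since both the radius condition and the sublattice $c\Gamma$ are preserved by multiplication by $\sqrt{-1}$. Everything else is the same ``vanishing principal term plus small perturbation'' mechanism as in Lemma \ref{lemma 1}, with the improved factor $1/r_i$ coming from the fact that the derivative of $1/\lambda^{2}$ is of order $1/|\lambda|^{3}$ rather than $1/|\lambda|^{2}$.
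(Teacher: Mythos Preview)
Your argument is correct and follows exactly the paper's approach: use the rotational symmetry $\sqrt{-1}\cdot\Gamma=\Gamma$ to make the unperturbed sum vanish, then bound each perturbation term by $\mathsf{K}/r_i^{3}$ and multiply by the cardinality $|B_{r_i}|\leqslant \mathsf{K}\,(r_i/c)^2$. Your write-up is in fact more detailed than the paper's own proof, which only sketches these steps.
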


\begin{proof}
The argument goes much the same way as the preceding one, by using the rotational symmetry of $\Gamma$ that
	$\sqrt{-1}\cdot\Gamma=\Gamma$.
	Indeed, we  have the identity 
	$\sum_{\mu\in \mathbf{A}_{r_i}\cap c\Gamma}\,\frac{1}{
		(\mu+0)^2}=0$. Moreover, for every $\mu\in \mathbf{A}_{r_i}
	\cap c\Gamma$, 
	we have 
	$|\frac{1}{
		(\mu+x_{\mu})^2}-\frac{1}{
		(\mu+0)^2}|
\leqslant 
\K/r_i^3
	$.
	The remaining argument is clear.
\end{proof}

We make a convention that $\log 0=-\infty$.

\begin{pro}
	For every $i\geqslant 2$
	and for $r_i/3\leqslant |z|\leqslant 3r_i$, one has
	\begin{equation}
		\label{psi<}
	\log |\psi(z)|\leqslant \K
 \cdot r_i^2/c^2.
 \end{equation}
	\end{pro}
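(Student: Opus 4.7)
The plan is to decompose $\log|\psi(z)|=\sum_{\lambda\in\Lambda}\log|E_2(z/\lambda)|$ with $E_2(w):=(1-w)\,e^{w+w^2/2}$, and to split the index set into
\[
\Lambda_{<}\,:=\,\bigcup_{j<i}B_{r_j},
\qquad
\Lambda_{=}\,:=\,B_{r_i},
\qquad
\Lambda_{>}\,:=\,\bigcup_{j>i}B_{r_j},
\]
which, under the rapid growth $r_{j+1}\geqslant r_j^4$, are sharply separated from the shell $|z|\sim r_i$. First I would dispose of $\Lambda_{>}$: here $|z/\lambda|\leqslant 6/r_i^3\leqslant 1/2$, so the Taylor expansion of $\log E_2$ yields the standard bound $\log|E_2(w)|\leqslant|w|^3$; combined with $|B_{r_j}|\leqslant \K\,r_j^2/c^2$ and the geometric-type telescoping $\sum_{j>i}1/r_j\leqslant 2/r_{i+1}$, this piece is $O(1/(c^2 r_i))$, hence negligible.

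On the two remaining regimes I would exploit the pointwise identity
\[
\log|E_2(w)|\,=\,\log|1-w|\,+\,\re(w)\,+\,\tfrac{1}{2}\re(w^2)
\]
and treat the three summands separately, invoking Lemmas~\ref{lemma 1} and~\ref{lemma 2} shell-by-shell to harness the cancellations in the regular parts. For the same-shell contribution from $\Lambda_{=}=B_{r_i}$, the term $\log|1-z/\lambda|$ is bounded by the harmless constant $\log 7$ since $|z/\lambda|\leqslant 6$, so its sum is $\leqslant |B_{r_i}|\log 7\leqslant \K\,r_i^2/c^2$; meanwhile the two regular pieces, after shell-sum, are controlled by $|z|\cdot\K/c^2$ and $|z|^2\cdot\K/(c^2 r_i)$ respectively, both easily absorbed. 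For the small-shell contribution from $\Lambda_{<}$, each $|\lambda|\leqslant r_{i-1}\leqslant r_i^{1/4}$ is tiny compared with $|z|$, so $\log|1-z/\lambda|=\log|z/\lambda|+O(|\lambda/z|)$ and $\sum_{\Lambda_{<}}\log|z/\lambda|\leqslant |\Lambda_{<}|\cdot\log|z|=O(r_i^{1/2}\log r_i/c^2)$, again absorbed.

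The actual obstacle lies in the quadratic regular piece over $\Lambda_{<}$, namely $\re\bigl(\tfrac12 z^2\sum_{\lambda\in\Lambda_{<}}1/\lambda^2\bigr)$: a naive estimate $\sum|z/\lambda|^2$ delivers only $O(i\cdot r_i^2/c^2)$, and since the rapid growth hypothesis merely forces $i=O(\log\log r_i)$, this prefactor is not a bounded constant. Here Lemma~\ref{lemma 2}'s sharper per-shell bound $\bigl|\sum_{B_{r_j}}1/\lambda^2\bigr|\leqslant \K/(c^2 r_j)$ is decisive: the telescoping $\sum_{j<i}1/r_j\leqslant 2/r_1$ together with the fixed lower bound $r_1\geqslant 2020\,c$ pins the piece down to $\leqslant \K\,r_i^2/c^2$. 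Assembling the three regimes then yields~\eqref{psi<}.
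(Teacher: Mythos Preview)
Your proposal is correct and follows essentially the same route as the paper: the same three-regime decomposition $\Lambda_{<}\cup\Lambda_{=}\cup\Lambda_{>}$, the Taylor bound $\log|E_2(w)|=-\sum_{n\geqslant 3}w^n/n$ for $\Lambda_{>}$, the crude bound on $\log|1-z/\lambda|$ together with Lemmas~\ref{lemma 1}--\ref{lemma 2} applied shell-by-shell for $\Lambda_{=}$ and $\Lambda_{<}$. Your discussion of the quadratic piece over $\Lambda_{<}$ as the ``obstacle'' requiring the extra $1/r_j$ decay of Lemma~\ref{lemma 2} (and the telescoping $\sum_{j<i}1/r_j\leqslant 2/r_1$) makes explicit what the paper leaves implicit; the only term you skip over is the linear piece $\re\bigl(z\sum_{\Lambda_{<}}1/\lambda\bigr)$, but that is harmless since by Lemma~\ref{lemma 1} it is $O(i\cdot r_i/c^2)$ with $i=O(\log\log r_i)$.
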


To bound the area  of $f(\D_{r_i})$ by $\K\cdot r_i^2$, it is crucial to have the above estimate. In fact, by classical complex analysis (c.f.~\cite[Chapter 4]{Levin1996}), we can check that $\psi$
is well-defined and that the infinite product is uniformly convergent in bounded domains, and that the exponential growth order of $\psi$ is, by applying  Borel's formula~\cite[page~30, Theorem 3]{Levin1996}, exactly $2$, {\em i.e.} 
$\log |\psi(z)|\leqslant \K_{\epsilon}\cdot |z|^{2+\epsilon}$ for  any $\epsilon>0$ and for large $|z|$.
Nevertheless, for the critical case that $\epsilon=0$ we need more effort.

\begin{proof}
	We first study $I:=\prod_{\ell=1}^{i-1}\prod_{\lambda\in B_{r_\ell}}(1-\frac{z}{\lambda})e^{\frac{z}{\lambda}+\frac{z^2}{2\lambda^2}}$ concerning  smaller annuli compared with  $\A_{r_i}$. Note that
	$\prod_{\ell=1}^{i-1}\prod_{\lambda\in B_{r_\ell}}
	|1-\frac{z}{\lambda}|\leqslant (1+3r_i)^{ |B_{r_1}|+\cdots +| B_{r_{i-1}}|}\leqslant
	(1+3r_i)^{\K\cdot r_i/c^2}
	$. Hence by Lemmas~\ref{lemma 1},~\ref
	{lemma 2}, we receive that
	$\log |I|\leqslant \log (1+3r_i)^{\K\cdot r_i/c^2}
	+ \sum_{\ell=1}^{i-1}
	(|\sum_{\lambda\in B_{r_\ell}} \frac{1}{\lambda}||z|
	+
	|\sum_{\lambda\in B_{r_\ell}} \frac{1}{\lambda^2}||\frac{z^2}{2}|)
	\leqslant
	\K\cdot r_i^2/c^2$.
	
	Secondly, we observe $II:=\prod_{\lambda\in B_{r_i}}(1-\frac{z}{\lambda})e^{\frac{z}{\lambda}+\frac{z^2}{3\lambda^2}}$ concerning the annulus  $\A_{r_i}$. Note that each term $|1-\frac{z}{\lambda}|\leqslant \K$ by our construction, and that $| B_{r_i}|\leqslant \K\cdot (r_i/c)^2$. Now using Lemmas~\ref{lemma 1},~\ref
	{lemma 2}, we receive that
	$\log |II|\leqslant \K\cdot r_i^2/c^2$.
	
	Lastly, we analyze $III:=\prod_{\ell\geqslant i+1}\prod_{\lambda\in B_{r_\ell}}(1-\frac{z}{\lambda})e^{\frac{z}{\lambda}+\frac{z^2}{2\lambda^2}}$ concerning  larger annuli compared with  $\A_{r_i}$.
	Now the key point is that, for each
	$\lambda\in B_{r_\ell}$, one has $|\frac{z}{\lambda}|\leqslant \frac{3r_i}{r_{\ell}/3}\ll 1$. Therefore we can apply the Taylor expansion of
	$\log (1-\frac{z}{\lambda})$ to
	achieve desired estimates. Indeed, noting that
	$
	\log\big( (1-\frac{z}{\lambda})e^{\frac{z}{\lambda}+\frac{z^2}{2\lambda^2}}\big)
	=
	-\sum_{n\geqslant 3}
	\frac{1}{n}
	(\frac{z}{\lambda})^n$,
	 hence
	 \[
	 	 \Big|
	 \log
	 \prod_{\lambda\in B_{r_\ell}}(1-\frac{z}{\lambda})e^{\frac{z}{\lambda}+\frac{z^2}{2\lambda^2}}
	 \Big|
	 \leqslant
	 \sum_{\lambda\in B_{r_\ell}}
	 \sum_{n\geqslant 3}
	 \frac{1}{n}
	 \Big|
	 \frac{z}{\lambda}
	 \Big|^n
	 \leqslant
	 \K\cdot r_\ell^2/c^2
	 \cdot
	 \sum_{n\geqslant 3}
	 \frac{1}{n}
	 \Big|
	 \frac{3r_i}{r_{\ell}/3}
	 \Big|^n
	 \leqslant
	 \K/c^2
	 \cdot
	 \frac{r_i^3}{r_\ell}
	 \cdot \K.
	 \]
	 Since the sequence $\{r_{\ell}\}_{\ell>i}$ grows very rapid by our construction~\eqref{r_i grow rapid}, there holds
	 $\sum_{\ell>i} \frac{r_i^3}{r_\ell}
	 <\K
	 $. Thus the above estimate yields $|\log III|\leqslant \K/c^2$.
	
Combining all the above  estimates about $I, II, III$, we receive the desired inequality~\eqref{psi<}.
\end{proof}

By our construction of $\Lambda$, it intersects each  disc $\D(z, 1)$ at most once.
Therefore we introduce
	\begin{equation}
		\label{psi_1}
\psi_1(z):=\prod_{\lambda\in\Lambda\setminus \D(z, 1)}\Big(1-\frac{z}{\lambda}\Big)e^{\frac{z}{\lambda}+\frac{z^2}{2\lambda^2}}
\end{equation}
to capture the asymptotic behavior of $\psi$ away from its zero locus $\Lambda$.

\begin{pro}
	\label{the most difficult estimate of psi}
	For every  $z\in \CC$ with  large $|z|$, 
	one has
	\begin{equation}
		\label{psi>}
	\log |\psi_1(z)
	|
	\geqslant -\K\cdot |z|^2/c^2.
\end{equation}
\end{pro}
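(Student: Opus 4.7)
The plan is to mirror the tripartite decomposition $\psi=I\cdot II\cdot III$ used in the proof of~\eqref{psi<}. Choosing $i$ so that $r_i/3\le|z|\le 3r_i$ (the main case), write $\psi_1=I\cdot II_1\cdot III$, where $I$ and $III$ are the sub-products over $\bigcup_{\ell<i}B_{r_\ell}$ and $\bigcup_{\ell>i}B_{r_\ell}$ exactly as before, and
\[
II_1:=\prod_{\lambda\in B_{r_i}\setminus\D(z,1)}\Bigl(1-\tfrac{z}{\lambda}\Bigr)e^{\frac{z}{\lambda}+\frac{z^2}{2\lambda^2}}.
\]
I aim to show $\log|I|,\log|II_1|,\log|III|\ge -\K|z|^2/c^2$ separately, then sum.

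For the outer factors $I$ and $III$ the arguments in the upper-bound proof already deliver two-sided estimates, since those proofs bound absolute values. The Taylor expansion $\log\bigl((1-z/\lambda)e^{z/\lambda+z^2/(2\lambda^2)}\bigr)=-\sum_{n\ge3}(z/\lambda)^n/n$ applies on $III$ (where $|z/\lambda|\ll1$) and gives $|\log|III||\le\K/c^2$. For $I$, the factors $\log|1-z/\lambda|$ are already \emph{positive} since $|z/\lambda|\gg1$, so only the exponential contribution $\re\sum_\ell\sum_{\lambda\in B_{r_\ell}}(z/\lambda+z^2/(2\lambda^2))$ can be negative, and it is bounded in absolute value by $\K|z|^2/c^2$ via Lemmas~\ref{lemma 1}--\ref{lemma 2} and the rapid growth~\eqref{r_i grow rapid}.

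The substantive new step is the lower bound on $\log|II_1|$. Its exponential part is again tamed by Lemmas~\ref{lemma 1}--\ref{lemma 2} (with an $O(1)$ correction from possibly removing the single zero $\lambda_0\in\D(z,1)$), yielding contribution of magnitude $\le\K|z|^2/c^2$. The delicate piece is
\[
S:=\sum_{\lambda\in B_{r_i}\setminus\D(z,1)}\log\bigl|1-\tfrac{z}{\lambda}\bigr|,
\]
whose individual terms can be very negative when $\lambda$ is close to $z$. I would partition $B_{r_i}\setminus\D(z,1)$ into dyadic annuli $A_k:=\{\lambda:2^{k-1}\le|\lambda-z|<2^k\}$ for $k=1,\ldots,\lceil\log_2(2r_i)\rceil$. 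Because $\Lambda$ is a bounded perturbation of the lattice $c\Gamma$ with $c\ge5$, one has $|A_k|\le\K\cdot 4^k/c^2$, and on $A_k$ the pointwise bound $\log|1-z/\lambda|\ge(k-1)\log 2-\log(3r_i)$ combines with a geometric-arithmetic resummation (set $m:=\lceil\log_2(3r_i)\rceil-k$) to yield
\[
S\ge-\frac{\K\,r_i^2}{c^2}\sum_{m\ge-1}4^{-m}(m+1)\ge-\frac{\K\,r_i^2}{c^2},
\]
the convergence of $\sum 4^{-m}(m+1)$ absorbing what would otherwise be a destructive factor $\log r_i$.

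The principal obstacle is thus the control of $S$: a crude term-by-term estimate $|B_{r_i}|\cdot\log(3r_i)$ overshoots the target by a factor $\log r_i$, so the dyadic shell decomposition is essential, exploiting that only $O(1)$ zeros lie in each $A_k$ with $2^k<c$ while the many distant zeros contribute only bounded logs. The residual case $|z|\in(3r_i,r_{i+1}/3)$ (a gap between consecutive annuli $\A_{r_i}$) is handled by an analogous splitting in which $II_1$ is vacuous and the factor over $B_{r_{i+1}}$ is estimated via the Taylor tail: since $|z/\lambda|\le 2/3$ there, one obtains $|\log|\prod_{B_{r_{i+1}}}||\le\K|z|^3/(c^2r_{i+1})\le\K|z|^2/c^2$, while the remaining $B_{r_\ell}$ ($\ell\le i$ or $\ell\ge i+2$) fall under the $I$- or $III$-type analysis.
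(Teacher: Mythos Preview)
Your proof is correct, and your tripartite scaffolding (factors from annuli below, at, and above the scale of $|z|$) matches the paper's. Where you genuinely diverge is in the core estimate
\[
S=\sum_{\lambda\in B_{r_i}\setminus\D(z,1)}\log\Bigl|1-\tfrac{z}{\lambda}\Bigr|\ \geqslant\ -\K\,r_i^2/c^2.
\]
The paper first replaces the perturbed zeros by their nearest lattice points $\mu\in c\Gamma$, picks the lattice point $\mu_0$ closest to $z$, and reduces $S$ to bounding $\prod_{\mu\neq\mu_0}|\mu-\mu_0|/|\mu|$. It then splits off a small square $\mu_0+\Gamma_{\leqslant\eta r_j}$ around $\mu_0$ and computes the product over that square essentially explicitly via the \emph{Stirling formula} (organising the lattice into horizontal and vertical lines so the product becomes a power of a factorial). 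A final geometric trick relocates the part of the square that falls outside the annulus $\A_{r_j}$ to a translate inside it. Your dyadic shell decomposition bypasses all of this: the count $|A_k|\leqslant \K\,4^k/c^2$ together with the pointwise bound on $\log|1-z/\lambda|$ turns $S$ into a geometric-type series $\sum 4^{-m}(m+1)$ whose convergence eats the dangerous $\log r_i$ factor. Your approach is more robust (it does not use the specific structure of $\Gamma$ beyond uniform separation and area counting), while the paper's yields a very concrete identity and avoids the minor separate treatment you need for the innermost shells $2^k<c$, where $|A_k|\leqslant 1$ rather than $\leqslant \K\,4^k/c^2$ and the residual contribution is $O(\log c\cdot\log r_i)\ll r_i^2/c^2$.
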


\begin{proof}
	Fix a  positive small number $\eta=\frac{1}{100}$. For every $i\geqslant 1$,
	we introduce the slightly larger annulus 
	$\widetilde{\A}_{r_i}:=\{x\in \CC : (1-\eta )r_i/2\leqslant|x|
	\leqslant (1+\eta)r_i\}\supseteq \A_{r_i}$, to make sure that $\widetilde{\A}_{r_i}\supseteq B_i$.
	
	Case $(i)$: $|z|$ large with $z\notin \cup_{i
\geqslant} \widetilde{\A}_{r_i}$. Then  $z$ lies between some two consequent annuli $\widetilde{\A}_{r_j}$
and $\widetilde{\A}_{r_{j+1}}$, i.e.,
$(1+\eta)r_j< |z|< (1-\eta)r_{j+1}/2$, and it is clear that $\psi(z)=\psi_1(z)$. Firstly, 
for each $\lambda\in \cup_{\ell=1}^j B_{r_\ell}$,
we have 
$|1-\frac{z}{\lambda}|\geqslant |\frac{z}{\lambda}|-1\geqslant \eta':=\eta/2$.
Thus
$\prod_{\ell=1}^{j}\prod_{\lambda\in B_{r_\ell}}
|1-\frac{z}{\lambda}|\geqslant \eta'^{\sum_{\ell=1}^j |B_{r_{\ell}}|}\geqslant
\eta'^{\K\cdot r_j^2/c^2}
\geqslant \eta'^{\K\cdot |z|^2/c^2}$. Next, thanks to
Lemmas~\ref{lemma 1},~\ref
{lemma 2}, we have
$\prod_{\ell=1}^{j}\prod_{\lambda\in B_{r_\ell}}
|e^{\frac{z}{\lambda}+\frac{z^2}{2\lambda^2}}|
\geqslant
e^{-|z|\sum_{\ell=1}^j\K/c^2-|z|^2\sum_{\ell=1}^j\K\cdot /(c^2 r_{\ell})}\geqslant
e^{-\K\cdot |z|^2/c^2}$.
Lastly, by mimicking the estimate of $III$ in the preceding proof, we receive that
\[
\log
|
\prod_{\ell\geqslant j+1}\prod_{\lambda\in B_{r_\ell}}(1-\frac{z}{\lambda})e^{\frac{z}{\lambda}+\frac{z^2}{2\lambda^2}}
|
\geqslant 
-\sum_{\ell\geqslant j+1}\sum_{\lambda\in B_{r_\ell}}
	\sum_{n\geqslant 3}
\frac{1}{n}
|\frac{z}{\lambda}|^n
\geqslant
-\K\cdot
|z|^2/c^2.
\]
Summarizing the above estimates, 
we conclude that
$\log |\psi_1(z)
|
=
\log |\psi(z)|
\geqslant -\K\cdot |z|^2/c^2$.
	
	Case $(ii)$: $|z|$ large with $ z\in\widetilde{\A}_{r_j}$ for some $j$.
	By repeating the same arguments as above, we can show that, first of all,
	$
	\log
	|
	\prod_{\ell=1}^{j-1}\prod_{\lambda\in B_{r_\ell}}(1-\frac{z}{\lambda})e^{\frac{z}{\lambda}+\frac{z^2}{2\lambda^2}}
	|
	\geqslant
	-\K\cdot |z|^2/c^2$,
	and secondly, 
	$
	\log
	|
	\prod_{\ell\geqslant j+1}\prod_{\lambda\in B_{r_\ell}}(1-\frac{z}{\lambda})e^{\frac{z}{\lambda}+\frac{z^2}{2\lambda^2}}
	|
	\geqslant
	 -\K\cdot |z|^2/c^2$.
	 By
	 Lemmas~\ref{lemma 1},~\ref
	 {lemma 2}, we  receive
	 $
	 \log
	 |
	 \prod_{\lambda\in B_{r_j}\setminus \D(z, 1)}e^{\frac{z}{\lambda}+\frac{z^2}{2\lambda^2}}
	 |
	 \geqslant
	 -\K\cdot |z|^2/c^2$.
	 Thus the remaining problem   is to show that
	$
	\log
	|
	\prod_{\lambda\in B_{r_j}\setminus \D(z, 1)}(1-\frac{z}{\lambda})
	|
	\geqslant
	-\K\cdot |z|^2/c^2$. 
	
	To start with, we find a point $\mu_0$ in $c\Gamma$ having the least Euclidean distance to $z$.
	Then for every $\lambda=\mu+x_{\mu}\in B_{r_j}$, we have 
	$
	|\lambda-z|
	\geqslant
	|\mu-z|
	-|x_{\mu}|
	\geqslant
	\frac{1}{2}
	(|\mu-z|+|\mu_0-z|)
	-\sqrt{2}
	\geqslant
	\frac{1}{2}
|\mu-\mu_0|-\sqrt{2}
	$. If moreover assume that $\mu\neq \mu_0$,
	then we can continue to estimate
	$|\lambda-z|
	\geqslant
	\frac{1}{2}|\mu-\mu_0|-\sqrt{2}
	\geqslant
	\frac{1}{4}
	|\mu-\mu_0|$, whence
	$|1-\frac{z}{\lambda}|=\frac{|\lambda-z|}{|\lambda|}\geqslant \frac{1}{8}\frac{|\mu-\mu_0|}{|\mu|}$. Since $(\frac{1}{8})^{|B_{r_j}|}\geqslant \exp(-\K\cdot |r_j|^2/c^2)$, 
	we only need to show that 
	\begin{equation}
		\label{core estimate}
	\log
	\prod_{\mu_0\neq\mu\in \A_{r_j}\cap c\Gamma}\frac{|\mu-\mu_0|}{|\mu|}\geqslant 
	-\K \cdot |r_j|^2/c^2
	\geqslant 
	-\K \cdot |z|^2/c^2.
	\end{equation}

	 For any positive number $r'$, denote by $\Gamma_{\leqslant r'}\subset \Gamma$ the subset of points   whose real and imaginary parts have absolute value $\leqslant r'$. Note that, for every $\mu\in \A_{r_j}\cap c\Gamma\setminus(
	 \mu_0+\Gamma_{\leqslant \eta r_j})$ far away from $\mu_0$, we have 
	 \begin{equation}
	 	\label{easy estimate}
	 \frac{|\mu-\mu_0|}{|\mu|}
	>
	\frac{\eta r_j}{r_j}
	= \eta.
	\end{equation}
Thus these $\mu$'s, having cardinality
$|\A_{r_j}\cap c\Gamma\setminus(
\mu_0+\Gamma_{\leqslant \eta r_j})|\leqslant \K\cdot |r_j|^2/c^2$,
 cause no trouble for~\eqref{core estimate}.
	
Lastly, we handle  $\mu\in \A_{r_j}\cap c\Gamma\cap (
\mu_0+\Gamma_{\leqslant \eta r_j})$ simultaneously.
Note that 
$c\Gamma\cap 
\Gamma_{\leqslant \eta r_j}\setminus \{0\}$  can be decomposed 
	into $2$ horizontal parts consisting of $\pm\{\ell\cdot c
	+0\cdot \sqrt{-1}\}_{\ell=1}^{[\frac{\eta}{c}r_j]}$, plus the remaining $4[\frac{\eta}{c} r_j]+2$  vertical parts consisting of
	 $\pm\{i\cdot c
	 +\ell\cdot c\sqrt{-1}\}_{\ell=1}^{[\frac{\eta}{c}r_j]}$
	 for $i=0, \pm 1, \pm 2, \dots, \pm [\frac{\eta}{c} r_j]$.
	 Each part
contains
	consequential $[\frac{\eta}{c} r_j]$ points having absolute values $\geqslant 1\cdot c, 2\cdot c, \dots, [\frac{\eta}{c} r_j]\cdot c$ respectively.
	 Hence 
	 \begin{equation}
	 	\label{square matrix}
	 	\prod_{0\neq \mu'\in c\Gamma\cap 
		\Gamma_{\leqslant \eta r_j}}\,|\mu'|\geqslant
	([\frac{\eta}{c} r_j]!\cdot c^{[\frac{\eta}{c} r_j]})^{4[\frac{\eta}{c} r_j]+2+2}.
\end{equation}
	 Now it is time to apply the Stirling formula that
	for every positive integer $n$, one has
	\[
	n!=n^n e^{-n} \sqrt{2\pi n}\,e^{\rho_n/12n}
	\]
	 for some $|\rho_n|\leqslant 1$.
	 A straightforward computation yields
	 {\footnotesize
	\begin{align}
		\label{Stirling}
	\log
	\prod_{\mu_0\neq\mu
		\in c\Gamma\cap (
		\mu_0+\Gamma_{\leqslant \eta r_j})}
	\frac{|\mu-\mu_0|}{|\mu|}
	&
	\geqslant 
	\log
	\prod_{0\neq \mu'\in c\Gamma\cap 
		\Gamma_{\leqslant \eta r_j}}\,
	\frac{|\mu'|}{2r_j}
	\notag
	\\
	\text{[by~\eqref{square matrix}]}\qquad
	&
	\geqslant 
	\log
	\Big(
	\big(
	[\frac{\eta}{c} r_j]!
	\cdot c^{[\frac{\eta}{c} r_j]}
	\big)^{4[\frac{\eta}{c} r_j]+4}
		\Big)
	-
	\log
	\Big(
	(2r_j)^{4[\frac{\eta}{c} r_j]^2+4[\frac{\eta}{c} r_j]}
	\Big)
	\notag
	\\
	&
	=
	\bigg[
	\log
	\Big(
	\big(
	[\frac{\eta}{c} r_j]!
	\big)^{4[\frac{\eta}{c} r_j]+4}
	\Big)
	-
	\log
	\Big(
	\big(\frac{\eta}{c} r_j
	\big)^{4[\frac{\eta}{c} r_j]^2+4[\frac{\eta}{c} r_j]}
	\Big)
	\bigg]
	+
	\log
	\Big(
	\big(\frac{\eta}{2}
	\big)^{4[\frac{\eta}{c} r_j]^2+4[\frac{\eta}{c} r_j]}
	\Big)
	\
	\notag
	\\
	\text{[by the Stirling formula]}\qquad
	&
	\geqslant
	-\K \cdot |r_j|^2/c^2.
	\end{align}
	
}
	Now the remaining problem is that $c\Gamma\cap (
	\mu_0+\Gamma_{\leqslant \eta r_j})$ might exceed $\mathbf{A}_{r_j}$. 
	Let us decompose $c\Gamma\cap (
	\mu_0+\Gamma_{\leqslant \eta r_j})$ with respect to $\mathbf{A}_{r_j}$ into two parts $(\mu_0+ P_{in})\cup (\mu_0+P_{out})$,
	where the first (resp. second) part lies entirely in (resp. outside) $\mathbf{A}_{r_j}$.
	Since $\eta$ is small,
	we can find some point $y\in \A_{r_j}\cap c\Gamma$ such that $y+P_{out}\subset \mathbf{A}_{r_j}$
	stays  away from $\mu_0+\Gamma_{\leqslant 8\eta r_j}$. See the picture below for illustration.
	
	\begin{center}
		\scalebox{.70}{\begin{picture}(0,0)%
\includegraphics{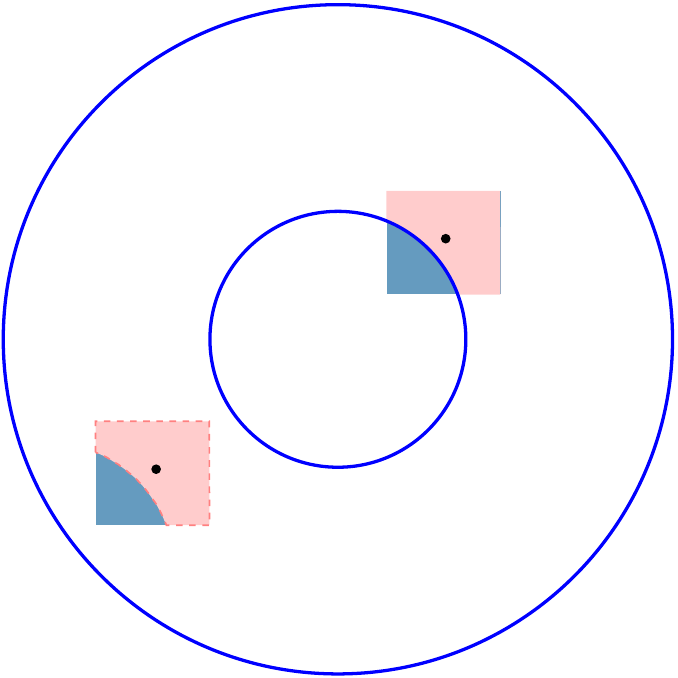}%
\end{picture}%
\setlength{\unitlength}{4144sp}%
\begingroup\makeatletter\ifx\SetFigFont\undefined%
\gdef\SetFigFont#1#2#3#4#5{%
  \reset@font\fontsize{#1}{#2pt}%
  \fontfamily{#3}\fontseries{#4}\fontshape{#5}%
  \selectfont}%
\fi\endgroup%
\begin{picture}(3090,3088)(1336,-2550)
\put(1813,-1977){\makebox(0,0)[lb]{\smash{{\SetFigFont{9}{10.8}{\familydefault}{\mddefault}{\updefault}{\color[rgb]{0,0,0}$y+P_{out}$}%
}}}}
\put(2786,-929){\makebox(0,0)[lb]{\smash{{\SetFigFont{9}{10.8}{\familydefault}{\mddefault}{\updefault}{\color[rgb]{0,0,0}$\mu_0+P_{out}$}%
}}}}
\put(2121,-1602){\makebox(0,0)[lb]{\smash{{\SetFigFont{9}{10.8}{\familydefault}{\mddefault}{\updefault}{\color[rgb]{0,0,0}$y$}%
}}}}
\put(3365,-247){\makebox(0,0)[lb]{\smash{{\SetFigFont{9}{10.8}{\familydefault}{\mddefault}{\updefault}{\color[rgb]{0,0,0}$\mu_0+P_{in}$}%
}}}}
\put(4124,-2076){\makebox(0,0)[lb]{\smash{{\SetFigFont{9}{10.8}{\familydefault}{\mddefault}{\updefault}{\color[rgb]{0,0,0}$r_j$}%
}}}}
\put(3276,-1621){\makebox(0,0)[lb]{\smash{{\SetFigFont{9}{10.8}{\familydefault}{\mddefault}{\updefault}{\color[rgb]{0,0,0}$\frac{r_j}{2}$}%
}}}}
\put(3426,-556){\makebox(0,0)[lb]{\smash{{\SetFigFont{9}{10.8}{\familydefault}{\mddefault}{\updefault}{\color[rgb]{0,0,0}$\mu_0$}%
}}}}
\end{picture}%
}
	\end{center}

 Lastly, we decompose 
	$\A_{r_j}\cap c\Gamma$ into $3$ disjoint parts,
	$\mu_0+ P_{in}$,
	$y+ P_{out}$ and $R$ the remaining.	Note that $\prod_{\mu\in y+P_{out}}\frac{|\mu-\mu_0|}{|\mu|}
	\geqslant
	\prod_{\mu\in \mu_0+P_{out}}\frac{|\mu-\mu_0|}{|\mu|}
	$, because each factor on the left-hand-side
	$\geqslant \frac{8\eta r_j}{r_j}=8\eta$, while each factor on the right-hand-side
	$\leqslant \frac{\sqrt{2}\eta r_j}{r_i/4}=4\sqrt{2}\eta$. Thus
	$
	\prod_{\mu_0\neq\mu\in \A_{r_j}\cap c\Gamma}\frac{|\mu-\mu_0|}{|\mu|}
	\geqslant
	\prod_{\mu_0\neq\mu
		\in c\Gamma\cap (
		\mu_0+\Gamma_{\leqslant \eta r_j})}
	\frac{|\mu-\mu_0|}{|\mu|}
	\cdot
	\prod_{\mu\in R}
	\frac{|\mu-\mu_0|}{|\mu|}
	$.
	By the estimates~\eqref{easy estimate},~\eqref{Stirling} and that the cardinality $| R|\leqslant  \K\cdot  r_j^2/c^2$,
	we conclude the proof.
\end{proof}
	
\section{\bf Estimates}
\label{section: estimate area of discs}	

\subsection{$f(z)$ is close to $\mathcal{C}_{\infty}$ unless $z$ is near $\Lambda$}
\label{f(z) near infinity curve}
Recalling~\eqref{psi_1},
we first rewrite
\[
||\psi\cdot s_m(z)||_{h_m}
=
||\psi_1\cdot s_m(z)||_{h_m}
\cdot 
|\diamondsuit|
\]
to concentrate positivity to the first factor,
where
if $z\in \D(\lambda, 1)$ for certain $\lambda\in \Lambda$ then $\diamondsuit=(1-\frac{z}{\lambda})e^{\frac{z}{\lambda}+\frac{z^2}{2\lambda^2}}$, otherwise $\diamondsuit=1$.
Now thanks to~\eqref{exponential growth of s},~\eqref{psi>},
the left part satisfies that
\[
||\psi_1\cdot s_m(z)||_{h_m}
\geqslant
\exp
\big(
(m\cdot\alpha-\K/c^2)
\cdot
|z|^2
\big).
\]
A key trick in this paper is that we choose $m$ and $c$ such that
\begin{equation}
\label{key choices of m, k}
m\cdot\alpha-\K/c^2
>0.
\end{equation}
Thus
for any small $\epsilon>0$,
for all sufficiently large $|z|\gg 1$ with $\text{dist}(z, \Lambda)\geqslant \epsilon$,
there holds
\begin{equation}
\label{psi.s large}
||\psi\cdot s_m(z)||_{h_m}
\gg
1,
\end{equation} 
i.e., $f(z)$ is very close to $\mathcal{C}_{\infty}$.
Indeed, if $\text{dist}(d, \Lambda)\geqslant 1$, then $\diamondsuit=1$, and there is nothing to proof; otherwise $\epsilon\leqslant |z-\lambda|< 1$ for some $\lambda\in \Lambda$,
hence
$
|\diamondsuit|
\geqslant
\frac{|z-\lambda|}{|\lambda|}
\cdot
\exp(-\frac{|z|}{|\lambda|}-\frac{|z|^2}{2|\lambda|^2})
\geqslant
\frac{\epsilon}{|z|+1}
\cdot
\exp(-\K)
$, thus
$||\psi\cdot s(z)||_{h_m}=||\psi_1\cdot s(z)||_{h_m}
\cdot 
|\diamondsuit|$ is very large when  $|z|\gg 1$.

\subsection{Bound the area $\int_{\D_{2r_i}}{f}^*\omega_{X}$ from above}

Fix a  K\"ahler form $\omega_{\mathcal{C}}=\dif\dif^c\big(|z|^2\big)$ on $\mathcal{C}$.  
The metric $h_m=h^{\otimes m}$ of $\mathcal{L}_m=\mathcal{L}^{\otimes m}$ together with the standard Euclidean metric
$|dz|$ on $\mathbb{C}$ provide a metric for the vector bundle $\mathcal{E}:=\mathcal{L}_m\oplus \mathbb{C}$, and therefore it induces a metric  on the tautological line bundle $\mathcal{O}_{\mathbb{P}(\mathcal{E})}(-1)$ on $\mathbb{P}(\mathcal{E})=X$.
Restricting to any fiber of 
$\pi_1: X\longrightarrow \mathcal{C}$,
the curvature form $\Theta_{\mathcal{O}_{\mathbb{P}(\mathcal{E})}(-1)}$ is strictly negative due to the property of the Fubini-Study metric for the tautological line bundle $\mathcal{O}_{\mathbb{P}^1}(-1)$. Therefore, by standard compactness argument, for sufficiently small $\epsilon_1>0$, we receive a 
K\"ahler form on $X$ of the shape
\begin{equation}
	\label{kahler form on X}
	\omega_X
	:=
	\pi_1^*\omega_{\mathcal{C}}-\epsilon_1\, \Theta_{{O}_{\mathbb{P}(\mathcal{E})}(-1)}.
\end{equation}

We can identify the tautological space $\mathcal{L}_m=\{(z, \xi): \xi\in \mathcal{L}_m|_{z}\}$ with an open set of $ \mathbb{P}(\mathcal{L}_m\oplus \mathbb{C})$, by
mapping $(z, \xi)\mapsto (z, [\xi\oplus 1])$. Thus in the local coordinates $(z, \xi)$, 
 the curvature
\[
\Theta_{\mathcal{O}_{\mathbb{P}(\mathcal{E})}(-1)}=
-\dif\dif^c\big(
\log(\|\xi\|_{h_m}^2+1)
\big)
\]
 is of Fubini-Study shape.
For $r$ lies in $[\frac{1}{3}r_i, 3r_i]$ for some $i\geqslant 1$, by Jensen's formula
we receive
{\footnotesize
\begin{align}
	\label{Jenson formula for Funiby-Study}
\int_{1}^r\frac{\dif t}{t}\int_{\D_t}
f^*
\Theta_{\mathcal{O}_{\mathbb{P}(\mathcal{E})}(-1)}
&
=
-
\frac{1}{4\pi}\,
\int_{0}^{2\pi}
\log(\|\psi\cdot s_m\|^2_{h_m}+1)(re^{i\theta})\dif \theta
+
\frac{1}{4\pi}\,
\int_{0}^{2\pi}
\log(\|\psi\cdot s_m\|^2_{h_m}+1)(e^{i\theta})\dif \theta
\notag
\\
\text{[use~\eqref{exponential growth of s},~\eqref{psi<}]}\qquad
&
\geqslant
-\K\cdot r_i^2
\notag,
\end{align}
}
 whence the Nevanlinna order function
 satisfies the estimate
\begin{equation}
	\label{order function estimate <=}
T_{f, {r}}(\omega_X)
=\int_{1}^{r}\frac{\dif t}{t}\int_{\D_t}f^*\omega_X
=
\int_{1}^{r}\frac{\dif t}{t}\int_{\D_t}f^*\pi_1^*\,\omega_{\mathcal{C}}
-
\epsilon_1\,
\int_{1}^{r}\frac{\dif t}{t}\int_{\D_t}f^*\Theta_{\mathcal{O}_{\mathbb{P}(\mathcal{E})}(-1)}
\leqslant 
\K\cdot r_i^2.
\end{equation}
Here is a useful observation
\begin{equation}
	\label{easy trick}
T_{{f},{3r_i}}(\omega_{X})
=\int_{1}^{{3r_i}}\frac{\dif t}{t}\int_{\D_t}{f}^*\omega_{X}
\geqslant \int_{{2r_i}}^{{3r_i}}\frac{\dif t}{t}\int_{\D_{2r_i}}{f}^*\omega_{X}
=
\ln (3/2) \cdot
\int_{\D_{2r_i}}
{f}^*\omega_{X}.
\end{equation}
Combing the two estimates above, we conclude that
\begin{equation}
	\label{disc image area is bounded}
\int_{\D_{2r_i}}{f}^*\omega_{X}\leqslant\K\cdot r_i^2.
\end{equation}

\subsection{Bound the area $\int_{\D_{r_i/3}}{f}^*\omega_{X}$ from below}
By Jensen's formula,
we have
{\footnotesize
	\begin{align}
		\label{tricky Jensen}
		\int_{r_i/4}^{r_i/3}\frac{\dif t}{t}\int_{\D_t}
		f^*
		\Theta_{\mathcal{O}_{\mathbb{P}(\mathcal{E})}(-1)}
		&
		=
		-
		\frac{1}{4\pi}\,
		\int_{0}^{2\pi}
		\log(\|\psi\cdot s_m\|^2_{h_m}+1)(\frac{r_i}{3}e^{i\theta})\dif \theta
		+
		\frac{1}{4\pi}\,
		\int_{0}^{2\pi}
		\log(\|\psi\cdot s_m\|^2_{h_m}+1)(\frac{r_i}{4}e^{i\theta})\dif \theta
		\notag
		\\
		\text{[for $r_i\gg 1$]}
		\qquad
		&
		=
		-
		\frac{1}{4\pi}\,
		\int_{0}^{2\pi}
		\log(\|\psi\cdot s_m\|^2_{h_m})(\frac{r_i}{3}e^{i\theta})\dif \theta
		+
		\frac{1}{4\pi}\,
		\int_{0}^{2\pi}
		\log(\|\psi\cdot s_m\|^2_{h_m})(\frac{r_i}{4}e^{i\theta})\dif \theta
		+
		o(1).
	\end{align}
}

Since the holomorphic function $\psi$ is nowhere vanishing on $\overline{\D}_{r_i/3}\setminus \D_{r_i/4}$ by our construction,
 $\log |\psi|^2$
is harmonic on $\overline{\D}_{r_i/3}\setminus \D_{r_i/4}$. Thus
\[
-
\frac{1}{4\pi}\,
\int_{0}^{2\pi}
\log |\psi|^2(\frac{r_i}{3}e^{i\theta})\dif \theta
+
\frac{1}{4\pi}\,
\int_{0}^{2\pi}
\log |\psi|^2(\frac{r_i}{4}e^{i\theta})\dif \theta
=
0.
\]
Hence we can simplify~\eqref{tricky Jensen} as
{\footnotesize
	\begin{align*}
		\label{tricky Jensen 2}
		\int_{r_i/4}^{r_i/3}\frac{\dif t}{t}\int_{\D_t}
		f^*
		\Theta_{\mathcal{O}_{\mathbb{P}(\mathcal{E})}(-1)}
		&
		=
		-
		\frac{1}{4\pi}\,
		\int_{0}^{2\pi}
		\log(\| s_m\|^2_{h_m})(\frac{r_i}{3}e^{i\theta})\dif \theta
		+
		\frac{1}{4\pi}\,
		\int_{0}^{2\pi}
		\log(\| s_m\|^2_{h_m})(\frac{r_i}{4}e^{i\theta})\dif \theta
		+
		o(1)
		\notag
		\\
		\text{[by~\eqref{exponential growth of s}]}\qquad
		&
		=
		-\frac{1}{4\pi}\,
		\int_{0}^{2\pi}
		2m\alpha\bigg|\frac{r_i}{3}\bigg|^2
		+\frac{1}{4\pi}\,
		\int_{0}^{2\pi}
		2m\alpha\bigg|\frac{r_i}{4}\bigg|^2
		+o(1)
		\notag\\
		&
		=
		-
		\frac{7m\alpha}{144}r_i^2
		+
		o(1).
	\end{align*}
}
Hence
{\footnotesize
\begin{equation}
	\label{key estimates about big dics}
\int_{r_i/4}^{r_i/3}\frac{\dif t}{t}\int_{\D_t}f^*\omega_X
=
\int_{r_i/4}^{r_i/3}\frac{\dif t}{t}\int_{\D_t}f^*\pi_1^*\omega_{\mathcal{C}}
-
\epsilon_1
\int_{r_i/4}^{r_i/3}\frac{\dif t}{t}\int_{\D_t}
f^*
\Theta_{\mathcal{O}_{\mathbb{P}(\mathcal{E})}(-1)}
=
\frac{7}{144}
(\frac{\pi}{2}+
\epsilon_1 m\alpha)r_i^2
+
o(1).
\end{equation}
}
Noting that
\[
\int_{r_i/4}^{r_i/3}\frac{\dif t}{t}\int_{\D_t}f^*\omega_X
\leqslant
\int_{r_i/4}^{r_i/3}\frac{\dif t}{t}\int_{\D_{r_i/3}}f^*\omega_X
=
\log(4/3)\cdot
\int_{\D_{r_i/3}}f^*\omega_X,
\]
we conclude that for $i\gg 1$ there holds
\begin{equation}
	\label{disc area is large enough}
	\int_{\D_{r_i/3}}f^*\omega_X
	\geqslant
	\K\cdot r_i^2.
\end{equation}

\subsection{Estimates of $
	\int_{\mathbb{D}(\lambda,\epsilon)}f^*\omega_X
	$}  

Mark the curve $\mathcal{C}_0\subset X$ induced by the zero section of $\mathcal{L}_m$. Note that $\mathcal{C}_0$ and $\mathcal{C}_{\infty}$ are disjoint. Contrasting to the observation in Subsection~\ref{f(z) near infinity curve},
for every $\lambda\in \Lambda$, since $\psi(\lambda)=0$, $f(\lambda)$ must lies in $\mathcal{C}_0$, which keeps certain positive distance to $\mathcal{C}_{\infty}$.

 Let $\epsilon$
be a small positive radius.
Denote by 
$\D([\lambda], \epsilon)\subset \mathcal{C}$ the disc centered at $[\lambda]$ with the radius $\epsilon$. Then the image of ${f}\big(\D({\lambda, \epsilon})\big)$ is contained in the neighborhood 
$U_{[\lambda], \epsilon}:=
\pi_1^{-1}
(\D([\lambda], \epsilon))
$
of $\PP_{[\lambda]}:=\pi_1^{-1}([\lambda])$.
Recall that the metric $\omega_X$ on $X$ is given in~\eqref{kahler form on X}. Firstly, since $\pi_1 \circ f=\pi_0$, where  $\pi_0:\mathbb{C}\rightarrow\mathbb{C}/\Gamma$ is the canonical projection, we have
$
\int_{\mathbb{D}(\lambda,\epsilon)}f^*\pi_1^*\omega_{\mathcal{C}}
=
\pi\epsilon^2$.

Next, we estimate $
\int_{\mathbb{D}(\lambda,\epsilon)}f^*\Theta_{{O}_{\mathbb{P}(\mathcal{E})}(-1)}
$.
Recalling~\eqref{psi_1}, 
we can rewrite
$
\psi(z)=(
		\psi_1\cdot e^{\frac{z}{\lambda}+\frac{z^2}{2\lambda}}
		)
\cdot
(
		1
		-
		\frac{z}{\lambda}
		)
		$,
		where the  factor $\psi_2:=	\psi_1\cdot e^{\frac{z}{\lambda}+\frac{z^2}{2\lambda}}$
		is nonvanishing for $z\in \D(\lambda, \epsilon)$. Thus $\log|\psi_2|$ is a harmonic function on $\D(\lambda, \epsilon)$. Now, using Jensen's formula, for $|\lambda|\gg 1$ large, we can estimate 
		{\footnotesize
\begin{align}
\label{area of disc D(lambda,t), O P()}
\int_{\epsilon}^{2\epsilon}
\frac{\dif t}{t}\int_{\mathbb{D}(\lambda,t)}
f^*
\Theta_{\mathcal{O}_{\mathbb{P}(\mathcal{E})}(-1)}
&=
-
\frac{1}{4\pi}\,
\int_{0}^{2\pi}
\log(\|\psi\cdot s_m\|^2_{h_m}+1)(\lambda+2\epsilon e^{i\theta})\dif \theta
+
\frac{1}{4\pi}\,
\int_{0}^{2\pi}
\log(\|\psi^2\cdot s_m\|^2_{h_m}+1)(\lambda+\epsilon e^{i\theta})\dif \theta\notag\\
\text{[by~\eqref{psi.s large}, as $|\lambda|\gg 1$]}
\qquad
&
=
-
\frac{1}{4\pi}\,
\int_{0}^{2\pi}
\log(\|\psi\cdot s_m\|^2_{h_m})(\lambda+2\epsilon e^{i\theta})\dif \theta
+
\frac{1}{4\pi}\,
\int_{0}^{2\pi}
\log(\|\psi^2\cdot s_m\|^2_{h_m})(\lambda+\epsilon e^{i\theta})\dif \theta +o(1)
\notag\\
&=
\bigg[-
\frac{1}{4\pi}\,
\int_{0}^{2\pi}
\log(\| s_m\|^2_{h_m})(\lambda+2\epsilon e^{i\theta})\dif \theta+
\frac{1}{4\pi}\,
\int_{0}^{2\pi}
\log(\| s_m\|^2_{h_m})(\lambda+\epsilon e^{i\theta})\dif \theta\bigg]
\\
&
\qquad
+
\bigg[-
\frac{1}{4\pi}\,
\int_{0}^{2\pi}
\log| \psi|^2(\lambda+2\epsilon e^{i\theta})\dif \theta+
\frac{1}{4\pi}\,
\int_{0}^{2\pi}
\log| \psi|^2(\lambda+\epsilon e^{i\theta})\dif \theta\bigg]
+o(1).
\notag
\end{align}
}

By \eqref{exponential growth of s}, the first bracket $[\cdots]$ in \eqref{area of disc D(lambda,t), O P()} can be computed as 
\begin{align}
\label{first term, norm s}
-\frac{1}{4\pi}\,
\int_{0}^{2\pi}
2m
\alpha|\lambda+2\epsilon e^{i\theta}|^2\dif \theta
+
\frac{1}{4\pi}\,
\int_{0}^{2\pi}
2m
\alpha|\lambda+\epsilon e^{i\theta}|^2\dif \theta
=
\dfrac{-3 m
	\alpha\epsilon^2}{2\pi}.
\end{align}

Now we separate $\log|\psi|(z)=\log|\psi_2|\cdot \log|1-\frac{z}{\lambda}|$. Thanks to the harmonicity of
$\log|\psi_2|$, we have 
\[
-
\frac{1}{4\pi}\,
\int_{0}^{2\pi}
\log(| \psi_2|^2)(\lambda+2\epsilon e^{i\theta})\dif \theta+
\frac{1}{4\pi}\,
\int_{0}^{2\pi}
\log(| \psi_2|^2)(\lambda+\epsilon e^{i\theta})\dif \theta
=
0.
\]
Thus we can calculate the second  bracket $[\cdots]$ of \eqref{area of disc D(lambda,t), O P()} as
\begin{align}
\label{second term, norm psi}
-
\frac{1}{4\pi}\,
\int_{0}^{2\pi}
\log
\bigg|
1
-
\frac{\lambda+2\epsilon e^{i\theta}}{\lambda}
\bigg|^2
\dif \theta
+
\frac{1}{4\pi}\,
\int_{0}^{2\pi}
\log
\bigg|
1
-
\frac{\lambda+\epsilon e^{i\theta}}{\lambda}
\bigg|^2
\dif \theta
=
-\log 2.
\end{align}

Hence it follows from \eqref{area of disc D(lambda,t), O P()}, \eqref{first term, norm s}, \eqref{second term, norm psi} that
\[
\int_{\epsilon}^{2\epsilon}
\frac{\dif t}{t}\int_{\mathbb{D}(\lambda,t)}
f^*
\Theta_{\mathcal{O}_{\mathbb{P}(\mathcal{E})}(-1)}
=
-\dfrac{3m\alpha\epsilon^2}{2\pi}
-\log 2
+
o(1)
\qquad
{\scriptstyle(\text{for }|\lambda|\gg 1)}.
\]
Therefore
\begin{equation}
\label{estimate are of disc lambda epsilon, epsilon--2epsilon}
\int_{\epsilon}^{2\epsilon}
\frac{\dif t}{t}\int_{\mathbb{D}(\lambda,t)}
f^*
\omega_X
=
\pi\epsilon^2
+\epsilon_1
\bigg(
\dfrac{3m\alpha\epsilon^2}{2\pi}
+
\log 2
+
o(1)
\bigg).
\end{equation}
By the same trick as~\eqref{easy trick}, we have
\[
\int_{\epsilon}^{2\epsilon}
\frac{\dif t}{t}\int_{\mathbb{D}(\lambda,t)}
f^*
\omega_X
\geqslant
\int_{\epsilon}^{2\epsilon}
\frac{\dif t}{t}\int_{\mathbb{D}(\lambda,\epsilon)}
f^*
\omega_X
\geqslant
\log 2\cdot \int_{\mathbb{D}(\lambda,\epsilon)}
f^*
\omega_X.
\]
Combining the above two estimates, we conclude
\begin{equation}
	\label{small disc high bound}
\int_{\mathbb{D}(\lambda,\epsilon)}
f^*
\omega_X
\leqslant
\dfrac{1}{\log 2}
\bigg(
\pi\epsilon^2
+\epsilon_1
\Big(
\dfrac{3m\alpha\epsilon^2}{2\pi}
+
\log 2
+
o(1)
\Big)
\bigg)
\qquad
{\scriptstyle(\text{for }|\lambda|\gg 1)}.
\end{equation}

Next, we provide an lower bound for $\int_{\mathbb{D}(\lambda,\epsilon)}
f^*\omega_X$. Substituting $\epsilon$ by $\frac{\epsilon}{2}$ in \eqref{estimate are of disc lambda epsilon, epsilon--2epsilon}, we receive that
\begin{equation}
\label{estimate are of disc lambda epsilon, 1/2epsilon--epsilon}
\int_{\frac{\epsilon}{2}}^{\epsilon}
\frac{\dif t}{t}\int_{\mathbb{D}(\lambda,t)}
f^*
\Theta_{\mathcal{O}_{\mathbb{P}(\mathcal{E})}(-1)}
=
\frac{\pi\epsilon^2}{4}
+\epsilon_1
\bigg(
\dfrac{3m\alpha\epsilon^2}{8\pi}
+
\log 2
+
o(1)
\bigg).
\end{equation}
Note that
\[
\int_{\frac{\epsilon}{2}}^{\epsilon}
\frac{\dif t}{t}\int_{\mathbb{D}(\lambda,t)}
f^*
\omega_X
\leqslant
\int_{\frac{\epsilon}{2}}^{\epsilon}
\frac{\dif t}{t}\int_{\mathbb{D}(\lambda,\epsilon)}
f^*
\omega_X
=
\log 2\cdot \int_{\mathbb{D}(\lambda,\epsilon)}
f^*
\omega_X.
\]
Hence it follows from the above two estimates that
\begin{equation}
	\label{small disc low bound}
\int_{\mathbb{D}(\lambda,\epsilon)}
f^*
\omega_X
\geqslant
\dfrac{1}{\log 2}
\bigg(
\frac{\pi\epsilon^2}{4}
+\epsilon_1
\Big(
\dfrac{3m\alpha\epsilon^2}{8\pi}
+
\log 2
+
o(1)
\Big)
\bigg)
\qquad
{\scriptstyle(\text{for }|\lambda|\gg 1)}.
\end{equation}

\subsection{Area estimates of $f(\CC)$ near horizontal curves}
\label{estimate area near horizontal curves}
An irreducible algebraic curve $D\subset X$
is said to be vertical if
$\pi_1(D)$ is a point; otherwise it is called horizontal, in the sense that $\pi_1(D)=\mathcal{C}$.

Firstly, for a vertical curve $\mathbb{P}^1_{[y]}=\pi_1^{-1}([y])$, by the estimates~\eqref{small disc high bound} and~\eqref{small disc low bound}, the area of $f(\D_{r})$  near $\mathbb{P}^1_{[y]}$, 
as $r\rightarrow \infty$,
is mostly
determined by 
asymptotic
growth of $|\D_r\cap \Lambda|$.

Next, for the horizontal curve $D=\mathcal{C}_{\infty}$,
by Subsection~\ref{f(z) near infinity curve}, 
$f(\D_{r})$ shall concentrate a large portion of area near $\mathcal{C}_{\infty}$
as $r\rightarrow \infty$.

Lastly, for any other irreducible horizontal curve $D\not= \mathcal{C}_{\infty}$,
we devote this subsection 
to prove that,
roughly speaking,
every time when
$f(\D_r)$ intersects with $D$, it contributes negligible area about there.

\smallskip

To start with, we take a general point $d_0\in D\setminus \mathcal{C}_{\infty}$
such that $\pi_1|_{D}$
is regular at $d_0$, i.e., some small open neighborhood  $U\subset D $ of $d_0$ is a graph over $\pi_1(U)$ containing $ \pi_1(d_0)=:c_0$.
By shrinking $U$ we may assume that $U$ stays away from $\mathcal{C}_{\infty}$, that  $\pi_1(U)$ is a small disc $\D(c_0, 3\delta)$ for some $\delta>0$, and that
the line bundle $\mathcal{L}_m$ locally has a trivialization
$\mathcal{L}_m|_{\D(c_0, 3\delta)}
\cong
\D(c_0, 3\delta)\times \CC$, which extends to an identification 
\begin{equation}
	\label{local chart of X}
	\vartheta:
	\pi_1^{-1}(\D(c_0, 3\delta))
	\xrightarrow[]{\cong}
	\D(c_0, 3\delta)\times \mathbb{P}^1(\mathbb{C})
\end{equation}
by fiberwised 
compactification
$\mathbb{C}\hookrightarrow \mathbb{P}^1(\mathbb{C})$  sending $z \mapsto [z: 1]$.
Hence we can read the coordinates of $U$ in the chart  $ \D(c_0, 3\delta)\times \CC$
as the graph of
some holomorphic map
$u: \D(c_0, 3\delta)\rightarrow \CC$.

Let $p_1, p_2$ be the  projections of 
$\D(c_0, 3\delta)\times \mathbb{P}^1(\mathbb{C})$ to the two factors.
Let $\omega_{\FS}$ be the Fubini-Study form on $\mathbb{P}^1(\mathbb{C})$.
By compactness argument,
the metric $p_1^*\omega_{\mathcal{C}}+p_2^*\omega_{\FS}$ is comparable with $(\vartheta^{-1})^*\omega_X$ on $\D(c_0, \frac{5\delta}{2})\times \mathbb{P}^1(\mathbb{C})$, namely
\begin{equation}
	\label{comparable metrics}
\K_{c_0, \delta, \vartheta}^{-1}\cdot(p_1^*\omega_{\mathcal{C}}+p_2^*\omega_{\FS})\leqslant
(\vartheta^{-1})^*\omega_X
\leqslant \K_{c_0, \delta, \vartheta}\cdot(p_1^*\omega_{\mathcal{C}}+p_2^*\omega_{\FS}).
\end{equation}

Fix a positive number
$\epsilon\ll \delta$.
Then the neighborhood $\pi_1^{-1}(\D(c_0, \delta))\cap D$ of $d_0$ in the coordinates reads as
\[
U_1
:=
\{
(z, w) :
z\in \D(c_0, \delta),
w=u(z)
\},
\] 
whose $\epsilon$--open neighborhood is
\[
U_{1}^{\epsilon}
:=
\{(z, w): z\in\D(c_0, \delta+\epsilon), |w-u(z)|<\epsilon\}.
\]

Fix a positive small number
$\delta'<\delta/2$.
By  Subsection~\ref{f(z) near infinity curve},
for $|z|\gg 1$ large with
dist$(z, \Lambda)>\delta'$, one sees that
$f(z)$ is very close to $\mathcal{C}_{\infty}$,
hence it is outside $U_1^{\epsilon}$.
Thus for bounding the area of 
$f(\D_r)\cap U_1^{\epsilon}$ from above by $o(1)\cdot r^2$,
we only need to show that,
as $\lambda\in \Lambda$ with $|\lambda|\gg 1$, the area 
$f(\D(\lambda, \delta'))\cap U_1^{\epsilon}$ is negligible $o(1)$.

\begin{obs}
	\label{shrink radius}
	Put $f_2:=p_2\circ \vartheta\circ f$.
	For $\lambda\in\Lambda$  with $|\lambda|\gg 1$ and 
	$[\lambda]\in \D(c_0, \delta+\delta')$,
	one has
	\[
	(\vartheta\circ f)^{-1}(U_1^{\epsilon})
	\cap
	\D(\lambda, \delta')
	\subset
	{f_2}^{-1}
	\Big(\mathbb{D}
	\big(
	u([\lambda]), 2\epsilon\big)
	\Big)
	\cap
	\mathbb{D}(\lambda, \delta').
	\]
	
\end{obs}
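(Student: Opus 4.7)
The plan is to prove this by unpacking the definitions of $\vartheta\circ f$ and $U_1^\epsilon$, and then applying a continuity estimate to the holomorphic graph function $u$.

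First, I would pick an arbitrary $z \in (\vartheta\circ f)^{-1}(U_1^\epsilon)\cap \D(\lambda, \delta')$. Since $\pi_1\circ f = \pi_0$ and $\vartheta$ preserves the base by construction (cf.~\eqref{local chart of X}), one has $\vartheta(f(z)) = ([z], f_2(z))$, where $[z] := \pi_0(z)\in \mathcal{C}$. The condition $\vartheta(f(z))\in U_1^\epsilon$ then yields both $[z]\in \D(c_0, \delta+\epsilon)$ and $|f_2(z) - u([z])|<\epsilon$.

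Second, I would verify that $[z]$ and $[\lambda]$ both lie in the domain $\D(c_0, 3\delta)$ of the holomorphic graph function $u$. Since $\delta'\ll 1$ is much smaller than the injectivity radius of $\pi_0: \CC \to \mathcal{C}$, the projection sends $\D(\lambda, \delta')$ isometrically onto $\D([\lambda], \delta')\subset \mathcal{C}$. Combining $|z-\lambda|<\delta'$ with $[\lambda]\in \D(c_0, \delta+\delta')$ and $\delta'<\delta/2$, one obtains $[z]\in \D(c_0, \delta+2\delta')\subset \D(c_0, 2\delta)$, comfortably inside $\D(c_0, 3\delta)$.

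Third, since $u$ is holomorphic, hence uniformly continuous on the compact set $\overline{\D(c_0, 2\delta)}$, by possibly shrinking $\delta'$ in advance one may arrange $|u([z])-u([\lambda])|<\epsilon$ whenever $|[z]-[\lambda]|<\delta'$. The triangle inequality then gives
\[
|f_2(z) - u([\lambda])| \leq |f_2(z) - u([z])| + |u([z]) - u([\lambda])| < \epsilon+\epsilon = 2\epsilon,
\]
so $z\in f_2^{-1}\bigl(\D(u([\lambda]), 2\epsilon)\bigr)$, which is exactly the desired inclusion. No serious obstacle is anticipated; the only mild care needed is to match the scales so that $\delta'$ is chosen small enough simultaneously to guarantee the geometric inclusion into the chart's domain and the modulus-of-continuity estimate for $u$. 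The hypothesis $|\lambda|\gg 1$ plays no active role in this particular argument; it is carried along for compatibility with the asymptotic regime exploited by the area estimates in which this observation will subsequently be used.
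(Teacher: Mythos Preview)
Your argument is correct, but it differs from the paper's in one substantive way. The paper does \emph{not} shrink $\delta'$; instead it keeps $\delta'<\delta/2$ as fixed and invokes Subsection~\ref{f(z) near infinity curve} together with the hypothesis $|\lambda|\gg 1$: since $U_1^{\epsilon}$ stays away from $\mathcal{C}_{\infty}$, for large $|\lambda|$ the set $(\vartheta\circ f)^{-1}(U_1^{\epsilon})\cap\D(\lambda,\delta')$ is already contained in a much smaller disc $\D(\lambda,\delta_\epsilon)$, where $\delta_\epsilon$ is chosen from the modulus of continuity of $u$. After that shrinking the triangle-inequality step is identical to yours.

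So your remark that ``$|\lambda|\gg 1$ plays no active role'' is true for your proof but false for the paper's: there, that hypothesis is precisely what replaces your additional smallness constraint on $\delta'$. Your route is legitimate because in the surrounding text $\delta'$ is declared \emph{after} $\epsilon$, so you are free to couple them; the paper's route has the mild advantage of proving the inclusion for every $\delta'<\delta/2$, decoupled from $\epsilon$, which makes the subsequent $\epsilon\to 0$ and $\epsilon'\to 0$ arguments (e.g.~\eqref{very subtle estimate}) a bit cleaner to read since only the threshold on $|\lambda|$ moves.
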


\begin{proof}
	By continuity of $u$ and by compactness of $\overline\D(c_0, \frac{5}{2}\delta)$, there exists 
	some positive number $\delta_{\epsilon}<\delta'$
	such that, for any two points $x_1, x_2\in 
	\overline\D(c_0, \frac{5}{2}\delta)$ with $|x_1-x_2|< \delta_{\epsilon}$,
	there holds
	$|u(x_1)-u(x_2)|<\epsilon$. By  Subsection~\ref{f(z) near infinity curve},
	for all $\lambda
	\in \Lambda$ with $|\lambda|\gg1$, the image of 
	$\D(\lambda, \delta')\setminus 
	{\D}(\lambda, \delta_{\epsilon})$
	under $\vartheta\circ f$ is outside $U_1^{\epsilon}$, therefore
	\[
	(\vartheta\circ f)^{-1}(U_1^{\epsilon})
	\cap
	\D(\lambda, \delta')
	\subset 
	(\vartheta\circ f)^{-1}(U_1^{\epsilon})
	\cap
	\D(\lambda, \delta_{\epsilon}).
	\]
	By definition, every element
	$z$ in the right-hand-side satisfies that
	$|f_2(z)-u([z])|<\epsilon$ and
	$|z-\lambda|< \delta_{\epsilon}$.
	Thus $|f_2(z)-u([\lambda])|
	\leqslant
	|f_2(z)-u([z])|+ |u([z])-u([\lambda])|
	< \epsilon+\epsilon=2\epsilon$,
	which concludes the proof.
\end{proof}

Now for every $v\in \mathbb{C}$ having absolute value $|c|\leqslant R:=\max\{|u(z)|+\epsilon:
z\in\overline\D(c_0, \frac{5}{2}\delta)\}<\infty$, for  $\lambda\in \Lambda$ with $|\lambda|\gg1$ and
$[\lambda]\in \D(c_0, \delta+\delta')$, consider the restricted holomorphic function
\[
f_2: \mathbb{D}(\lambda, \delta')\longrightarrow \mathbb{C}.
\]
Since $|f_2|\gg 1$  (in particular $|f_2|>R$) on $\partial \mathbb{D}(\lambda, \delta')$ by  Subsection~\ref{f(z) near infinity curve}, by the Argument Principle, the number of solutions of the equation
$f_2(y)=v$ on the disc $\mathbb{D}(\lambda, \delta')$, counting multiplicities, equals to
\[
\frac{1}{2\pi \sqrt{-1}}
\int_{z\in \partial \mathbb{D}(\lambda, \delta')}
\frac{(f_2-v)'}{f_2-v}(z)
\dif z.
\]
Noting that the above quantity takes integer value,
and that it varies continuously with respect to $v\in \D_R$,
it must be a constant for every $v\in \D_R$. Now checking the special value $v=0$, we see that
the number of solution is just $1$. Thus Observation~\ref{shrink radius} implies that,  for
$\lambda\in\Lambda$
with $|\lambda|$ sufficiently large and $[\lambda]\in \D(c_0, \delta+\delta')$,
we have
\begin{equation}
\label{inequality 1}
\area\Big(
(\vartheta\circ f)^{-1}(U_1^{\epsilon})
\cap
\D(\lambda, \delta')
\Big)_{f_2^*\omega_{\FS}}
\leqslant
\area
\big(
u([\lambda]), 2\epsilon\big)_{\omega_{\FS}}
\leqslant
\K\cdot \epsilon^2.
\end{equation}
Also, by Subsection~\ref{f(z) near infinity curve},
for every positive number $\epsilon'>0$,
for  $\lambda\in \Lambda$ with sufficiently large $|\lambda|$,
we have
\[
(\vartheta\circ f)^{-1}(U_1^{\epsilon})
\cap
\D(\lambda, \delta')
\subset
\D(\lambda, \epsilon').
\]
Therefore
\begin{equation}
\label{inequality 2}
\area\Big(
(\vartheta\circ f)^{-1}(U_1^{\epsilon})
\cap
\D(\lambda, \delta')
\Big)_{\pi_1^*\omega_{\mathcal{C}}}
\leqslant
\area
\Big(
\D(\lambda, \epsilon')
\Big)_{\pi_1^*\omega_{\mathcal{C}}}
=
\pi\cdot\epsilon'^2.
\end{equation}
Summarizing~\eqref{comparable metrics},~\eqref{inequality 1},~\eqref{inequality 2}, for 
 $\lambda \in \Lambda$ with $|\lambda|\gg1$ and $[\lambda]\in \D(c_0, \delta+\delta')$, we have
\begin{equation}
\label{very subtle estimate}
\area\Big(
(\vartheta\circ f)^{-1}(U_1^{\epsilon})
\cap
\D(\lambda, \delta')
\Big)_{f^*\omega_{X}}
\leqslant
\K_{c_0, \delta,  \vartheta}\cdot
(\K\,
\epsilon^2
+
\pi\,\epsilon'^2
).
\end{equation}

\subsection{Area of $f$ near $\lambda\in \Lambda$ revisit}

An alternative way to interpret~\eqref{small disc low bound} is the following
 
 \begin{obs}
 	\label{obs 4.2}
 	Let $\delta'>0$ be a small positive number. Let
 	 $U$ be an open neighborhood of $\mathcal{C}_{\infty}$ such that its closure $\overline{U}$ stays away from $\mathcal{C}_0$. 
 	 Then one has the estimate
	\begin{equation}
		\label{very subtle estimate again}
		\area\Big(
		(X\setminus U)
		\cap
		f\big(
		\D(\lambda, \delta')
		\big)
		\Big)_{\omega_{X}}
		\geqslant
		\K_U
		\qquad
		{
			 (\forall \lambda\in \Lambda\text{ with } |\lambda|
			 \gg 1)}.
	\end{equation}
 \end{obs}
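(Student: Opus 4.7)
The plan is to exploit that $f(\lambda)\in\mathcal{C}_{0}$ whenever $\lambda\in\Lambda$ (because $\psi(\lambda)=0$), and then convert the rapid boundary growth of $\psi\cdot s_m$ recorded in~\eqref{psi.s large} into a definite piece of image sitting inside a uniform tube about $\mathcal{C}_0$, hence disjoint from $\overline{U}$. The Argument Principle (already used in Subsection~\ref{estimate area near horizontal curves}) will be the key tool.

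First, I would arrange uniform local charts. Since $\mathcal{C}_{0}$ and $\overline{U}$ are disjoint compact subsets of the compact surface $X$, I cover $\mathcal{C}$ by finitely many small discs $\mathbb{D}(c_j,\delta)$, each equipped with a local trivialization of $\mathcal{L}_m$ that extends to an identification $\vartheta_j:\pi_1^{-1}(\mathbb{D}(c_j,\delta))\xrightarrow{\cong}\mathbb{D}(c_j,\delta)\times\mathbb{P}^1(\CC)$ as in~\eqref{local chart of X}. By compactness I can pick $\rho_U>0$ so that on each chart the vertical tube $\{(z,w):|w|<\rho_U\}$ lies entirely in $X\setminus\overline{U}$, and at the same time the metric comparison~\eqref{comparable metrics} holds with a single uniform constant, giving $(\vartheta_j^{-1})^{*}\omega_X\geqslant \K_U^{-1}\bigl(p_1^{*}\omega_{\mathcal{C}}+p_2^{*}\omega_{\FS}\bigr)$ on every chart.

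Next, I fix $\lambda\in\Lambda$ with $|\lambda|\gg 1$ and choose the chart $j$ with $[\lambda]\in\mathbb{D}(c_j,\delta)$. In the trivialization, $f|_{\mathbb{D}(\lambda,\delta')}$ reads as $z\mapsto\bigl([z],\,f_2(z)\bigr)$, where $f_2(z)=\psi(z)\,\sigma(z)$ and $\sigma$ is the nowhere-vanishing local representative of $s_m$. Thus $f_2$ has a simple zero at $\lambda$, and by Subsection~\ref{f(z) near infinity curve} together with~\eqref{psi.s large}, $|f_2(z)|\gg\rho_U$ on $\partial\mathbb{D}(\lambda,\delta')$ for $|\lambda|$ sufficiently large. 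The Argument Principle then shows that for every $v\in\mathbb{D}_{\rho_U}$ the equation $f_2(z)=v$ admits exactly one simple solution in $\mathbb{D}(\lambda,\delta')$; hence $V:=f_2^{-1}(\mathbb{D}_{\rho_U})\cap\mathbb{D}(\lambda,\delta')$ is an open set on which $f_2|_V:V\to\mathbb{D}_{\rho_U}$ is a biholomorphism.

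Finally, by the choice of $\rho_U$ we have $f(V)\subset X\setminus\overline{U}$, and $f|_V$ is injective since $\pi_1\circ f(z)=[z]$ and $f_2|_V$ is injective. Therefore
\[
\area\bigl((X\setminus U)\cap f(\mathbb{D}(\lambda,\delta'))\bigr)_{\omega_X}
\;\geqslant\;
\int_{V}f^{*}\omega_X
\;\geqslant\;
\K_U^{-1}\int_{V}f_2^{*}\omega_{\FS}
\;=\;
\K_U^{-1}\cdot\area(\mathbb{D}_{\rho_U})_{\omega_{\FS}},
\]
which is a positive constant depending only on $U$, as claimed. The main delicate point I anticipate is the uniformity of $\rho_U$ and of the metric comparison across the finite cover of $\mathcal{C}$, which is a routine but necessary compactness argument. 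The virtue of the Argument Principle step is that it sidesteps any direct analysis of the size of $V$ (which can become extremely small as $|\lambda|\to\infty$, owing to the explosive growth of $\psi\cdot s_m$) by replacing it with the clean boundary estimate~\eqref{psi.s large}.
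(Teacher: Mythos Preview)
Your argument is correct, but it follows a genuinely different route from the paper's.

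The paper covers $\mathcal{C}_0$ by finitely many charts $\vartheta_i:U_i\to V_i\subset\mathbb{C}^2$ disjoint from $\overline{U}$, arranges that each $V_i$ contains a Euclidean ball of uniform radius $r$ about $\vartheta_i(c)$ for every $c\in\mathcal{C}_0\cap U_i$, and then observes that for $|\lambda|\gg1$ the boundary $f(\partial\mathbb{D}(\lambda,\delta'))$ is pushed close to $\mathcal{C}_\infty$ (Subsection~\ref{f(z) near infinity curve}), so that in the chart around $f(\lambda)$ the image $\vartheta_i\bigl(f(\mathbb{D}(\lambda,\delta'))\cap U_i\bigr)$ is a \emph{proper} holomorphic curve through the center of $B(\vartheta_i(f(\lambda)),r)$. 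The lower area bound then comes directly from Wirtinger's inequality (Proposition~\ref{curve in ball area}), giving $\pi r^2$ in the Euclidean metric and hence $\K_U$ in $\omega_X$ by comparability.

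Your approach instead exploits the fibered structure of $X$ over $\mathcal{C}$: in a trivialization the vertical coordinate is $f_2=\psi\cdot\sigma$, which has a simple zero at $\lambda$ and is large on $\partial\mathbb{D}(\lambda,\delta')$, so by the Argument Principle it biholomorphically covers a fixed disc $\mathbb{D}_{\rho_U}$ in the fiber; the Fubini--Study area of this disc yields the bound. This recycles the exact mechanism already used in Subsection~\ref{estimate area near horizontal curves} and makes no appeal to Wirtinger, at the modest cost of the bookkeeping that ensures $\mathbb{D}(\lambda,\delta')$ fits into a single fibered chart (handled, as you note, by a Lebesgue-number argument for the finite cover of $\mathcal{C}$). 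The paper's proof is shorter and more geometric; yours gives slightly more---it shows the image actually covers a definite vertical tube, not merely that it has enough area---but for the Observation either suffices.
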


This strengthens~\eqref{small disc low bound}
in a qualitative sense,
and will be helpful for discussing diffuse parts later. Before going to the proof of the above result, recall the following special case of Wirtinger's inequality.
	
	\begin{pro}
		\label{curve in ball area}
		\text{(c.f.~ \cite[page~7]{Duval2017-2})} 
	Let $C$ be a proper holomorphic curve in the ball
	$B(0, \epsilon)\subset \mathbb{C}^n$
passing through $0$.
Then with the standard Euclidean metric, one has
$
\area(C)\geqslant \pi \epsilon^2.$
\qed
\end{pro}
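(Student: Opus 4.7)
The assertion is Lelong's monotonicity inequality for positive closed currents of bidimension $(1,1)$ applied to the integration current $[C]$, equivalently the Euclidean monotonicity formula specialized to the fact that complex curves are minimal submanifolds of $\mathbb{C}^n$. The plan is to combine three classical ingredients: Wirtinger's equality, a monotonicity formula for the area-growth function, and a Lelong-number computation at the origin. By Wirtinger, the standard K\"ahler form $\omega=\tfrac{\sqrt{-1}}{2}\sum dz_j\wedge d\bar z_j$ restricts on any complex curve to the induced Riemannian area form, so setting $A(r):=\area(C\cap B(0,r))=\int_{C\cap B(0,r)}\omega$ for $r\in(0,\epsilon]$, the claim reduces to $A(\epsilon)\geqslant\pi\epsilon^{2}$.

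The core step is to prove that $r\mapsto A(r)/r^{2}$ is non-decreasing on $(0,\epsilon]$. Because $C$ is holomorphic it is a minimal submanifold of $\mathbb{C}^n$, whence $\Delta_C|z|^{2}\equiv 4$ on the regular locus of $C$. Applying the divergence theorem and the coarea formula on $C\cap B(0,r)$ with the function $|z|^{2}$ yields the identities $2A(r)=\int_{C\cap\partial B(0,r)}|z^{T}|\,d\sigma$ and $rA'(r)=\int_{C\cap\partial B(0,r)}r^{2}/|z^{T}|\,d\sigma$, where $z^{T}$ denotes the component of $z$ tangent to $C$. Subtracting produces the explicit monotonicity identity
\[
\frac{d}{dr}\!\left(\frac{A(r)}{r^{2}}\right)=\frac{1}{r^{3}}\int_{C\cap\partial B(0,r)}\frac{r^{2}-|z^{T}|^{2}}{|z^{T}|}\,d\sigma\;\geqslant\;0,
\]
valid for radii avoiding the (discrete) singular locus $\sing(C)$.

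Finally, I would analyze the limit $\lim_{s\to 0^{+}}A(s)/s^{2}$. Locally at $0$, $C$ decomposes into finitely many irreducible branches, each parametrized by a holomorphic Puiseux-type map $\phi_{j}\colon \D_{\delta}\to\mathbb{C}^n$ with $\phi_{j}(0)=0$ of vanishing order $k_{j}\geqslant 1$; a direct computation gives $\area(\phi_{j}(\D_{\delta})\cap B(0,s))=k_{j}\pi s^{2}(1+o(1))$ as $s\to 0^{+}$, so summing over branches, $\lim_{s\to 0^{+}}A(s)/s^{2}=\pi\,\nu(C,0)\geqslant\pi$, identifying the Lelong number with $C$'s algebraic multiplicity at $0$ (Thie's theorem). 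Combining with the monotonicity above, $A(\epsilon)/\epsilon^{2}\geqslant\pi$; properness of $C$ in $B(0,\epsilon)$ then forces $C=C\cap B(0,\epsilon)$ and hence $\area(C)\geqslant\pi\epsilon^{2}$. The main technical obstacle is carrying out the divergence theorem rigorously on the possibly singular curve $C$: one excises small neighborhoods of $\sing(C)$, applies Stokes on the regular locus, and verifies that the excision boundary contributions vanish in the limit, which is routine given that $\sing(C)\cap\overline{B(0,\epsilon)}$ is a finite set for any complex curve in $\mathbb{C}^n$.
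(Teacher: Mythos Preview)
Your proof is correct and is the standard argument for Lelong's inequality via the monotonicity of $r\mapsto A(r)/r^{2}$ together with the identification of the density at $0$ with $\pi$ times the multiplicity. There is nothing to compare against: the paper does not prove this proposition at all---it records it as a known special case of Wirtinger's inequality, cites an external reference, and closes with \qed. Your write-up supplies exactly the proof one would find in the cited literature.

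One minor comment: the identity $\Delta_C|z|^2\equiv 4$ and the subsequent monotonicity computation are correct but require $C$ to be a minimal surface, which you deduce from holomorphicity via Wirtinger; this is fine, though strictly speaking the monotonicity can be obtained more directly for the integration current $[C]$ using only that $\omega=\dif\dif^c|z|^2$ and Stokes, without invoking minimality. Either route is acceptable here.
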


\begin{proof}[Proof of Observation~\ref{obs 4.2}]
By compactness,  $\mathcal{C}_0$ can be covered by finitely many open neighborhoods $U_i$, being disjoint with $U$, with charts
$\vartheta_i: U_i\rightarrow V_i\subset \mathbb{C}^2$.
By shrinking $U_i$'s if necessary, we can assume that every pull-back by $\vartheta_i$ of the standard Euclidean metric on $\mathbb{C}^2$ is comparable with $\omega_X$. Again by the compactness of $\mathcal{C}_0$,
for every point $c\in \mathcal{C}_0$, certain chart $V_i$ of
$U_i\owns c$ contains a sufficiently large ball centered at
$\vartheta_i(c)$
with a uniform radius $r>0$.
Now, by Subsection~\ref{f(z) near infinity curve}, for $\lambda\in \Lambda$ with $|\lambda|\gg1$, 
for $c=f(\lambda)\in \mathcal{C}_0$, in the chart $V_i$ we see that 
$\vartheta_i\big(	f(
\D(\lambda, \delta')
)\cap U_i\big)$ 
contains a proper holomorphic curve in the ball
$B(\vartheta_i(c), r)$,
having positive area $\geqslant \pi r^2$
 by Proposition~\ref{curve in ball area}. The desired conclusion then  follows from the comparability of metrics.
\end{proof}

\section{\bf Algorithm}
\label{section: algorithm}

First of all, we require that $m, c$ satisfy the condition~\eqref{key choices of m, k}.

Next, we choose  distinct points in
a strip of $\mathcal{D}$
\begin{equation}
	\label{choose y_i}
\{y_i\}_{i\in \mathbb{Z}_+}
\subset
\{x+y\sqrt{-1}:
1/6\leqslant x<1/3, 0\leqslant y<1\}.
\end{equation}

A collection of $N\geqslant 1$ points 
\[
b_1,\dots, b_N\in\mathcal{D}_{\mathsf{R}}
	:=
	\{x+y\sqrt{-1}:
	1/2\leqslant x<1, 0\leqslant y<1\},
\]
located in the right-half of $\mathcal{D}$,
 is said to be {\sl distributed sparsely}, if for any disc $\mathbb{D}(a,r)$, the following cardinality estimate holds
\begin{equation}
	\label{distribute sparsely}
	\big|\mathbb{D}(a,r)
	\cap \{b_1,\dots,b_N\}\big|
	\leqslant\max\{1, \K\cdot r^2 N\}.
\end{equation}
For  instance,  this can be reached by choosing distinct points
\[
b_1, \dots, b_N
\in
\Big\{
\frac{[\sqrt{N}]+1+\ell_1}{2[\sqrt{N}]+2}
+
\frac{\ell_2}{[\sqrt{N}]+1}\sqrt{-1}
\Big\}_{0\leqslant\ell_1,\ell_2\leqslant[\sqrt{N}]
}.
\]

Let $\mathcal{S}=\{I\subset \mathbb{Z}_{+}: I\text{ is a finite nonempty set, or } I=\varnothing, \text{ or } I=\mathbb{Z}_{+}\}$. Then $\mathcal{S}$ is  countable, i.e., there exists some  bijection  $\sigma:\mathcal{S}\rightarrow\mathbb{Z}_{+}$. On the other hand, we can decompose $\mathbb{Z}_{+}$ into some infinite disjoint union $\cup_{i\in\mathbb{Z}_{+}}\mathcal{Z}_i$, where each component $\mathcal{Z}_i$ contains infinitely many integers. For every $I\in\mathcal{S}$, write all the elements of $\mathcal{Z}_{\sigma(I)}$ in the increasing order as
$
Z_1^{I}<{Z}_2^{I}<{Z}_3^{I}<\cdots
$.
Thus we can rearrange
\begin{equation*}
	\label{decompostion of Z+}
	\mathbb{Z}_{+}
	=
	\cup_{I\in \mathcal{S}}\mathcal{Z}_{\sigma(I)}
	=\cup_{I\in\mathcal{S}}
	\cup_{j\geqslant 1}
	\{{Z}_j^{I}\}.
\end{equation*}

For every positive integer $i=Z^I_j$,
we now choose all the $x_{\mu}\in \mathcal{D}$ for  $\mu\in
 \A_{r_i}\cap c\Gamma$ as follows. 

\begin{itemize}
	\item{Case I}: $I=\varnothing$.
	
	We require that all the $x_{\mu}$'s 
	are distributed sparsely in $\mathcal{D}_{\mathsf{R}}$.
	
	\smallskip
	
	\item{Case II}: $I=\{i_1,\dots,i_k\}$ is some finite set of  $k\geqslant 1$ elements, and $j\geqslant 1$ is an odd integer. 
	
	We choose all
	$x_{\mu}$  from $\{y_{i_1}, \dots, y_{i_k}\}$, such that, for every
	$\ell=1,\dots, k$, $x_{\mu}=y_{i_\ell}$ for at least
	$[\frac{|\A_{r_i}\cap c\Gamma|}{k}]$ times.
	
	\smallskip
	
	\item{Case II'}: $I=\{i_1,\dots,i_k\}$ is some finite set of  $k\geqslant 1$ elements, and $j\geqslant 1$ is an even integer.

	We choose some $x_{\mu}=y_{i_\ell}$ for 
	$[\frac{|\A_{r_i}\cap c\Gamma|}{2k}]$ times, where $\ell=1,\dots, k$,
	and we require the  remaining $x_{\mu}$'s  to be distributed sparsely  in $\mathcal{D}_{\mathsf{R}}$. 
	
	\smallskip
	
	\item{Case III}: $I=\mathbb{Z}_{+}$, and $j\geqslant 1$ is odd. 
	
	Fix a sequence of positive numbers $\{\alpha_j\}_{j=1}^{\infty}$ with $\sum_{j=1}^{\infty}\alpha_j=1$. 
	We choose all
	$x_{\mu}$  from $\{y_{\ell}\}_{\ell=1}^{\infty}$, such that, for every
 $\ell\geqslant 1$,
 $x_{\mu}=y_{\ell}$ for at least  
	$[\alpha_{\ell}\cdot |\A_{r_i}\cap c\Gamma|]$ times.
	
	\smallskip
	
	\item{Case III'}   $I=\mathbb{Z}_{+}$, and $j\geqslant 1$ is even. 
	
	For every $\ell\geqslant 1$,
	we choose some $x_{\mu}=y_{\ell}$ for 
	$[\frac{\alpha_{\ell}}{2}\cdot|\A_{r_i}\cap c\Gamma|]$ times;
	and we choose the remaining $x_{\mu}$'s
	to be distributed sparsely  in $\mathcal{D}_{\mathsf{R}}$.
\end{itemize}

\section{\bf Proofs}\label{section: proof}
We are now in position to prove the main results. Recall that from a given sequence of discs of increasing radii $r_i\nearrow \infty$, after a perturbation and passing to some subsequence, we can always receive an Ahlfors current for $f$.

\begin{obs}
	\label{obs 6.1}
	From the sequence of radii $\{\frac{r_i}{3}\}_{i\geqslant 1}$, one receives a singular Ahlfors current  $T$ of the shape
	\[
	T=c_{\infty}\cdot[\mathcal{C}_{\infty}].
	\]
\end{obs}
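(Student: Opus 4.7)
The plan is to realize an Ahlfors current from the sequence $\{r_i/3\}_{i\geqslant 1}$ and then show that its entire mass concentrates on $\mathcal{C}_\infty$. First, after a small perturbation enforcing the length-area condition via Ahlfors' lemma and passing to a subsequence, I would obtain a weak limit $T$ of the normalized currents $[f(\D_{r_i/3})]/\area_\omega f(\D_{r_i/3})$. The two-sided estimate $\K_1 r_i^2\leqslant \area_\omega f(\D_{r_i/3})\leqslant \K_2 r_i^2$, combining the lower bound~\eqref{disc area is large enough} with the upper bound inherited from~\eqref{disc image area is bounded} via $\D_{r_i/3}\subset \D_{2r_i}$, guarantees that $T$ is a well-defined, nontrivial, positive closed $(1,1)$--current.

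Next I would prove $\supp(T)\subseteq\mathcal{C}_\infty$. The key observation is that $\D_{r_i/3}$ stays strictly inside the inner boundary of $\A_{r_i}$, since $r_i/3<r_i/2$; therefore
\[
\Lambda\cap\D_{r_i/3}\;\subseteq\;\bigcup_{\ell=1}^{i-1}B_{r_\ell}.
\]
The rapid growth $r_i\geqslant r_{i-1}^4$ of~\eqref{r_i grow rapid} together with $|B_{r_\ell}|\leqslant\K\,(r_\ell/c)^2$ yields $|\Lambda\cap\D_{r_i/3}|\leqslant \K\, r_{i-1}^2/c^2\leqslant \K\, r_i^{1/2}/c^2=o(r_i^2)$. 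Feeding this into the uniform upper bound~\eqref{small disc high bound}, the total area contribution of $f$ on any fixed $\epsilon$--neighborhood of $\Lambda\cap\D_{r_i/3}$ is $o(r_i^2)$, thus negligible compared with $\area_\omega f(\D_{r_i/3})\geqslant \K\, r_i^2$. On the complementary region, Subsection~\ref{f(z) near infinity curve} confines $f$ into an arbitrarily small tubular neighborhood of $\mathcal{C}_\infty$ for $|z|\gg 1$. Hence, testing $T$ against any smooth $(1,1)$--form supported in $X\setminus\mathcal{C}_\infty$ yields zero, giving $\supp(T)\subseteq\mathcal{C}_\infty$.

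Finally, since $T$ is a positive closed $(1,1)$--current on the surface $X$ supported on the smooth irreducible curve $\mathcal{C}_\infty$, the support theorem for positive closed currents (equivalently, Siu's decomposition combined with the vanishing-Lelong-number characterization of the diffuse part) forces $T=c_\infty\cdot[\mathcal{C}_\infty]$ for some $c_\infty\geqslant 0$, and $c_\infty>0$ follows from the nontriviality of $T$. The main obstacle, or rather critical input, is the delicate accounting showing that the area of $f$ on a neighborhood of $\Lambda\cap\D_{r_i/3}$ is indeed $o(r_i^2)$; this rests crucially on both the super-polynomial growth condition $r_i\geqslant r_{i-1}^4$, which keeps the count of near-$\mathcal{C}_0$ contact points sparse, and the uniform small-disc bound~\eqref{small disc high bound}.
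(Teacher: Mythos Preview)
Your proof is correct and follows the same overall strategy as the paper: show that all but an $o(r_i^2)$ portion of the area of $f(\D_{r_i/3})$ lies in any prescribed neighborhood of $\mathcal{C}_\infty$, then invoke the support theorem. The execution differs slightly. The paper discards the entire inner disc $\D_{r_{i-1}+2}$ in one stroke, bounding its total $f^*\omega_X$--area by $\K\,r_{i-1}^2=o(r_i^2)$ via~\eqref{disc image area is bounded}, and then observes that the remaining annulus $\D_{r_i/3}\setminus\D_{r_{i-1}+2}$ stays at uniform distance $\geqslant 2-\sqrt{2}$ from $\Lambda$, so Subsection~\ref{f(z) near infinity curve} applies directly with a fixed $\epsilon$. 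You instead count $|\Lambda\cap\D_{r_i/3}|=O(r_{i-1}^2)$ and apply the per--$\lambda$ small-disc estimate~\eqref{small disc high bound} to bound the area near each zero individually. The paper's shortcut is cleaner (it never needs~\eqref{small disc high bound} here), while your version makes the underlying mechanism---sparsity of contacts with $\mathcal{C}_0$---more explicit; both buy the same conclusion with the same inputs~\eqref{r_i grow rapid}, \eqref{disc image area is bounded}, \eqref{disc area is large enough} and Subsection~\ref{f(z) near infinity curve}.
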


\begin{proof}
	Note that
	all points in 
	$\D_{r_i/3}\setminus \D_{r_{i-1}+2}$
	keep positive distance
	$\geqslant 2-\sqrt{2}$ to $\Lambda$. Thus for any small open neighborhood $U$ of $\mathcal{C}_{\infty}$,
	by Subsection~\ref{f(z) near infinity curve}, for
	$i\gg 1$, for every
	$z\in\D_{r_i/3}\setminus \D_{r_{i-1}+2}$, we have $f(z)\in U$.
	Note that for $i\gg 1$, by~\eqref{disc image area is bounded} and~\eqref{disc area is large enough}, one see that the area of $f(\mathbb{D}_{r_{i-1}+2})$ is
	 negligible comparing with that of $f(\mathbb{D}_{r_{i}/3})$, namely
	\[
	\int_{\mathbb{D}_{r_{i-1}+2}}f^*\omega_X
	\leqslant
	\K\cdot r_{i-1}^2
	=
	o(1)
	\cdot
	r_i^2
	\leqslant
	o(1)
	\cdot
	\int_{\mathbb{D}_{r_{i}/3}}f^*\omega_X.
	\]
	Thus $T$  charges zero mass outside $U$. Since this holds true for any open neighborhood $U\supset \mathcal{C}_{\infty}$, we conclude that $T$ must be supported on $\mathcal{C}_{\infty}$.
\end{proof}

\begin{obs}
	\label{obs 6.2}
	From the sequence of radii $\{r_{
		Z^{\varnothing}_{j}}\}_{j\geqslant 1}$, one gets an Ahlfors current $T$ having the shape
	\[
	T=a_{\infty}\cdot[\mathcal{C}_{\infty}]+ T_{\diff},
	\]
	where $a_{\infty}$ is some positive number and $T_{\diff}$ is a nontrivial diffuse part.
\end{obs}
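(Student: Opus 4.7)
The plan is as follows. For radii $r_{Z^\varnothing_j}$ (Case I of Section~\ref{section: algorithm}), the perturbations $\{x_\mu\}_{\mu\in \A_{r_i}\cap c\Gamma}$ lie sparsely in $\mathcal{D}_{\mathsf{R}}$. Passing to a subsequence if necessary, I may assume this sequence produces a well-defined Ahlfors current $T$; apply Siu's decomposition $T=T_{\sing}+T_{\diff}$. I will show (i) $T_{\sing}=a_\infty\cdot[\mathcal{C}_\infty]$ with $a_\infty>0$, and (ii) $T_{\diff}$ is nontrivial.

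For (i), the positivity $a_\infty>0$ follows the pattern of Observation~\ref{obs 6.1}. By Subsection~\ref{f(z) near infinity curve}, for any open tubular neighborhood $U_\eta$ of $\mathcal{C}_\infty$ one finds $\epsilon=\epsilon(\eta)>0$ such that $f(z)\in U_\eta$ whenever $|z|\gg 1$ and $\text{dist}(z,\Lambda)\geqslant\epsilon$. Combined with the upper bound~\eqref{small disc high bound} and the count $|\Lambda\cap\D_{r_i}|\leqslant \K\cdot r_i^2/c^2$, the unnormalized mass of $f(\D_{r_i})$ outside $U_\eta$ is $\leqslant\K_\eta\cdot r_i^2/c^2$, whereas the total area $\int_{\D_{r_i}}f^*\omega_X\geqslant \K\cdot r_i^2$ by~\eqref{disc area is large enough}; hence by choosing $c$ large enough (compatibly with~\eqref{key choices of m, k}) the normalized mass of $T$ on $\overline{U_\eta}$ is at least $1-\K_\eta/c^2>0$. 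Letting $\eta\to 0$ gives $T(\mathcal{C}_\infty)>0$, so $a_\infty>0$. To rule out any other irreducible curve $D\subset X$ from $T_{\sing}$, I distinguish three cases. For a vertical fiber $\mathbb{P}^1_{[y]}$, the sparse distribution~\eqref{distribute sparsely} applied to $\{[x_\mu]\}$ gives $|\{\lambda\in\Lambda\cap\A_{r_i}:[\lambda]\in\D([y],\delta)\}|\leqslant \K\cdot\delta^2\cdot r_i^2/c^2$, so invoking~\eqref{small disc high bound} for each such $\lambda$ yields normalized mass $\leqslant \K\cdot\delta^2$ in a $\delta$-tube around $\mathbb{P}^1_{[y]}$, tending to $0$ as $\delta\to 0$ and killing the generic Lelong number of $T$ along $\mathbb{P}^1_{[y]}$. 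For a horizontal $D\neq\mathcal{C}_\infty$, estimate~\eqref{very subtle estimate} of Section~\ref{estimate area near horizontal curves} applied to every $\lambda$ with $[\lambda]$ near $\pi_1(D)$ gives normalized mass of order $\epsilon^2$ in an $\epsilon$-tube, which vanishes as $\epsilon\to 0$. Finally, $f(\CC)$ meets $\mathcal{C}_0$ only at the discrete points $f(\lambda)=([\lambda],[0\oplus 1])$ for $\lambda\in\Lambda$, and for generic $q\in\mathcal{C}_0$ the same sparse-distribution counting shows $\nu(T,q)=0$, hence $\mathcal{C}_0\notin T_{\sing}$.

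For (ii), Observation~\ref{obs 4.2} provides, for any open $U\supset\mathcal{C}_\infty$ with $\overline{U}\cap\mathcal{C}_0=\varnothing$, the uniform lower bound $\area(f(\D(\lambda,\delta'))\cap(X\setminus U))\geqslant\K_U$ for every $\lambda\in\Lambda$ with $|\lambda|\gg 1$. Summing over $\lambda\in\Lambda\cap\D_{r_i}$ and normalizing, the mass satisfies $T(X\setminus U)\geqslant\K_U\cdot(r_i^2/c^2)/(\K r_i^2)=\K_U/(c^2\K)>0$, a lower bound independent of $U$. Since by (i) the singular part of $T$ is carried by $\mathcal{C}_\infty\subset U$, this positive residual mass must belong to $T_{\diff}$, so $T_{\diff}\neq 0$. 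The main obstacle in this scheme is carrying out the vertical-fiber estimate cleanly: one must couple~\eqref{distribute sparsely} with~\eqref{small disc high bound} precisely for the annulus $\A_{r_i}$ while absorbing the contribution of the earlier annuli $\A_{r_\ell}$ for $\ell<Z^\varnothing_j$, whose total area is $o(r_i^2)$ by the rapid growth~\eqref{r_i grow rapid}, so that they do not spoil the vanishing as $\delta\to 0$.
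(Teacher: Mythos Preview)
Your three-step plan matches the paper's proof, and Steps~2 and~3 are essentially identical to what the paper does. The one substantive difference is in Step~1 (positivity of $a_\infty$). You bound the mass of $f(\D_{r_i})$ \emph{outside} a tube $U_\eta$ via the sum $\sum_{\lambda}\int_{\D(\lambda,\epsilon)}f^*\omega_X\leqslant \K\cdot r_i^2/c^2$ and then need $c$ large to conclude $1-\K/c^2>0$; this forces an extra constraint on $c$ beyond~\eqref{key choices of m, k}. The paper avoids this by observing that the annular region $\D_{r_i/3}\setminus\D_{r_{i-1}+2}$ is at distance $\geqslant 2-\sqrt{2}$ from $\Lambda$, so its image lies entirely in $U$ by Subsection~\ref{f(z) near infinity curve}, and by~\eqref{disc area is large enough} together with $\int_{\D_{r_{i-1}+2}}f^*\omega_X=o(r_i^2)$ this region alone carries area $\geqslant \K\cdot r_i^2$---no condition on $c$ needed. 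Your route still works (the construction allows $c$ to be taken large), but the paper's is cleaner.

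Two minor corrections: your separate treatment of $\mathcal{C}_0$ is unnecessary since $\mathcal{C}_0$ is a horizontal curve $\neq\mathcal{C}_\infty$ and is already handled by~\eqref{very subtle estimate}; and in Step~(ii) the lower bound $\K_U/(c^2\K)$ is \emph{not} independent of $U$ (the constant $\K_U$ from Observation~\ref{obs 4.2} depends on the charts covering $\mathcal{C}_0$ disjoint from $U$), but this is harmless since a single fixed $U$ suffices to show $T_{\diff}\neq 0$.
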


\begin{proof}
	{\sl Step 1}: $T$ charges positive mass along $\mathcal{C}_{\infty}$. 
	
	Indeed,
	for any open neighborhood $U$ of $\mathcal{C}_{\infty}$,
	it follows from the preceding proof and the estimate~\eqref{disc area is large enough} that, for $j\gg 1$ and for $i=Z^{\varnothing}_{j}$, we have
	$f(\D_{r_i/3}\setminus \D_{r_{i-1}+2})\subset U$
	 and
$
	\int_{
\D_{r_i/3}\setminus \D_{r_{i-1}+2}	
}f^*\omega_X
	\geqslant
	\K
	\cdot
	r_i^2$.
	On the other hand, by~\eqref{disc image area is bounded} we know that
	$\int_{
		\D_{r_i}	
	}f^*\omega_X
	\leqslant
	\K\cdot r_i^2$.
	Thus $T$ charges $U$ by positive mass $\geqslant \K>0$. Since this holds true for any $U$, we conclude that
	$T$ charges positive mass along $\mathcal{C}_{\infty}$.
	
	\smallskip
	
	{\sl Step 2}: $T$ does not charge any other algebraic curve.
	
	If $D\neq \mathcal{C}_{\infty}$ is an irreducible horizontal curve, using the same notations as that of Subsection~\ref{estimate area near horizontal curves},
	 by the
	 estimate~\eqref{very subtle estimate},
	 and by choosing $\epsilon'\leqslant \epsilon$, we know that
	 $T$ charges the neighborhood
	 $\vartheta^{-1}(U_1^{\epsilon})$ by a small mass $\leqslant \K_{c_0, \delta, \vartheta}\cdot {\epsilon}^2$.
	 Letting $\epsilon\rightarrow 0$, we receive that $T$ charges no mass on $U_1\subset D$. Thus $T$ cannot charge positive mass on $D$.
	 
	 If $D=\mathbb{P}^1_{a}$ is an irreducible vertical curve, for an open neighborhood $U$ of $\mathcal{C}_{\infty}$,
	 we claim that
	 $T$ charges no mass on $D\setminus U$. Indeed,
	 for any small $\epsilon>0$,
	 by Subsection~\ref{f(z) near infinity curve},
	 for $|z|\gg 1$ with
	 $f(z)\in \pi_{1}^{-1}(\D(a, \epsilon))\setminus U$, there must be some $\lambda\in\Lambda$ such that
	 $z\in \D(\lambda, \epsilon)$. Note also that $\pi_1(z)=[z]$, we get
	 $[\lambda]\in \D(a, 2\epsilon)$.
	  By~\eqref{distribute sparsely}, we have
	 \[
	 |
	 \pi_0^{-1}(\D(a, 2\epsilon))\cap \Lambda\cap \D_{r_{Z^{\varnothing}_j}}
	 |
	 \leqslant
	 \K
	  \epsilon^2
	 \cdot
	 r_{Z^{\varnothing}_j}^2.
	 \]
	 Hence by the estimates~\eqref{small disc high bound} and~\eqref{disc area is large enough},
	 such points $z\in \D_{r_{Z^{\varnothing}_j}}$  with
	 $f(z)\in \pi_{1}^{-1}(\D(a, \epsilon))\setminus U$ constitute 
	 only small portion of area measured by $f^*\omega_X$, thus
	  $T$ charges mass $\leqslant \K\cdot \epsilon^2$
	 over $\pi_{1}^{-1}(\D(a, \epsilon))\setminus U$. Letting $\epsilon\rightarrow 0$, we conclude the claim. Since this holds for any open neighborhood $U$, we receive that $T$  does not charge $\mathbb{P}_a^1$, which finishes this step.

		\smallskip
	
	{\sl Step 3}: $T$ has positive mass outside $\mathcal{C}_{\infty}$.
	
	Take any small open neighborhood $U$ of
	$\mathcal{C}_\infty$ such that $\overline{U}\cap \mathcal{C}_0$ is empty.
	Note that for every $\lambda\in \Lambda\cap \A_{r_{Z^{\varnothing}_j}}$ where $j\gg 1$, by~\eqref{very subtle estimate again}, the image of $f$ about $\lambda$
	contributes $\geqslant \K$  area outside $U$. 
	Moreover, by our construction
	$|\Lambda\cap \A_{r_{Z^{\varnothing}_j}}|\geqslant\K\cdot r_{Z^{\varnothing}_j}^2$,
	thus the total area of 
	$f(\D_{r_{Z^{\varnothing}_j}})\setminus U$ is
	$\geqslant \K\cdot  r_{Z^{\varnothing}_j}^2$. Lastly, by~\eqref{disc image area is bounded}, we conclude that
	$T$ charges positive mass outside $U$.
\end{proof}

\begin{obs}\label{obs 6.3}
	Fix any finite subset $I=\{i_1,\dots,i_k\}\subset \mathbb{Z}_+$ having cardinality  $k\geqslant 1$.
	Then from the sequence of radii $\{r_{Z^{I}_{2j-1}}\}_{
		j\geqslant 1}$, one receives an Ahlfors current $T$
	having the shape
	\[
	T=a_{\infty}\cdot[\mathcal{C}_{\infty}]+
	\sum_{\ell=1}^k
	a_{i_{\ell}}\cdot
	[\mathbb{P}^1_{[y_{i_{\ell}}]}],
	\]
	where $a_{\infty}, a_{i_1}, \dots, a_{i_k}$ are some positive numbers.
\end{obs}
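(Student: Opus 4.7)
The plan is to follow the three-step scheme of the proof of Observation~\ref{obs 6.2}, now exploiting Case II of the algorithm: at every index $i = Z^I_{2j-1}$, all perturbations $x_\mu$ for $\mu \in \A_{r_i} \cap c\Gamma$ are drawn from the finite set $\{y_{i_1}, \ldots, y_{i_k}\}$, with each $y_{i_\ell}$ used at least $[|\A_{r_i} \cap c\Gamma|/k] \geqslant \K \cdot r_i^2/(kc^2)$ times.

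First, the positive weight $a_\infty > 0$ on $\mathcal{C}_\infty$ is produced by the argument of Step~1 of Observation~\ref{obs 6.2} verbatim: the region $\D_{r_i/3} \setminus \D_{r_{i-1}+2}$ avoids $\Lambda$, so by Subsection~\ref{f(z) near infinity curve} its $f$-image lies in any prescribed neighborhood $U \supset \mathcal{C}_\infty$, while~\eqref{disc area is large enough} and~\eqref{disc image area is bounded} give matching lower and upper area bounds of order $r_i^2$, so $T$ charges $U$ with positive mass.

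Next, for each $\ell \in \{1, \ldots, k\}$, I establish the positive weight $a_{i_\ell} > 0$ on the vertical curve $\mathbb{P}^1_{[y_{i_\ell}]}$. Fix a small $\epsilon > 0$ and set $V_\ell := \pi_1^{-1}(\D([y_{i_\ell}], \epsilon))$. The algorithm provides at least $\K \cdot r_i^2/k$ many $\lambda \in B_{r_i}$ with $x_\mu = y_{i_\ell}$, hence $[\lambda] = [y_{i_\ell}]$ in $\mathcal{C}$; since $\pi_1 \circ f = \pi_0$, one has $f(\D(\lambda, \epsilon)) \subset V_\ell$, and~\eqref{small disc low bound} gives $\int_{\D(\lambda, \epsilon)} f^*\omega_X \geqslant \K$. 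Summing, the mass of $V_\ell$ is bounded below by $\K \cdot r_i^2/k$, a definite proportion of the total $\K \cdot r_i^2$. Letting $\epsilon \to 0$ places positive mass on $\mathbb{P}^1_{[y_{i_\ell}]}$.

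The main obstacle is the final step, which simultaneously rules out other singular supports and any diffuse part. For any point $p \in X$ not lying on the stated curves, choose a small neighborhood $V$ of $p$ disjoint from a small neighborhood $U$ of $\mathcal{C}_\infty$ and from each $\pi_1^{-1}(\D([y_{i_\ell}], \epsilon))$ for some small $\epsilon > 0$; I claim $T(V) = 0$. For $i = Z^I_{2j-1}$ large, any $z \in \D_{r_i}$ with $f(z) \in V$ must satisfy $|z| > r_{i-1}+2$ (the inner disc carries only $O(r_{i-1}^2) = o(r_i^2)$ mass by~\eqref{disc image area is bounded} and~\eqref{r_i grow rapid}) and $\text{dist}(z, \Lambda) < \epsilon$ (otherwise Subsection~\ref{f(z) near infinity curve} would force $f(z) \in U$, absurd); hence $z \in \D(\lambda, \epsilon)$ for some $\lambda \in B_{r_i}$. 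But Case II guarantees $[\lambda] \in \{[y_{i_1}], \ldots, [y_{i_k}]\}$, so $\pi_1(f(z)) = [z] \in \D([y_{i_\ell}], \epsilon)$ for some $\ell$, forcing $f(z) \in \pi_1^{-1}(\D([y_{i_\ell}], \epsilon))$, which is disjoint from $V$ --- contradiction. Combining with Siu's decomposition, $T$ has the claimed shape, with $a_\infty, a_{i_1}, \ldots, a_{i_k} > 0$.
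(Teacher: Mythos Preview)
Your proof is correct and follows essentially the same approach as the paper: you show that outside a neighborhood of $\mathcal{C}_\infty\cup\mathbb{P}^1_{[y_{i_1}]}\cup\cdots\cup\mathbb{P}^1_{[y_{i_k}]}$ the current charges zero mass (using Subsection~\ref{f(z) near infinity curve} and Case~II of the algorithm), and that each component carries positive mass (via~\eqref{disc area is large enough},~\eqref{disc image area is bounded} for $\mathcal{C}_\infty$ and~\eqref{small disc low bound} for the vertical fibers). The only cosmetic difference is order: the paper first confines the support and then verifies positive mass on each component, whereas you do the reverse.
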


\begin{proof}
	For any small open neighborhood $U$ of $\mathcal{C}_{\infty}\cup \mathbb{P}^1_{[y_{i_{1}}]}
	\cup
	\cdots
	\cup \mathbb{P}^1_{[y_{i_{k}}]}$,
	by Subsection~\ref{f(z) near infinity curve}, for
	$j\gg 1$,
	for all points
	$z\in\D_{r_{Z^{I}_{2j-1}}}\setminus \D_{r_{Z^{I}_{2j-1}-1}+2}$
	we have $f(z)\in U$.
	Indeed, choose a very small  $\epsilon>0$ such that
	$U$ contains $\pi_1^{-1}(\D([y_{i_\ell}], \epsilon))$ for every $\ell=1, \dots, k$. Then if $\text{dist}(z, \Lambda)\geqslant \epsilon$, we know that $f(z)$ is very close to $\mathcal{C}_{\infty}$, whence $f(z)\in U$; 
	otherwise $\text{dist}(z, \Lambda)< \epsilon$,
	that is $z\in \D(\lambda, \epsilon)$
	for some $\lambda\in \pi_0^{-1}([y_{i_\ell}])$ ($\ell=1, \dots, k$) by our construction of $\Lambda$, hence $f(z)\in \pi_1^{-1}(\D([y_{i_\ell}], \epsilon))\subset U$. 
	
	Therefore, by the same argument as that of Observation~\ref{obs 6.1}, one sees that $T$ is supported in
	$\mathcal{C}_{\infty}\cup \mathbb{P}^1_{[y_{i_{1}}]}
	\cup
	\cdots
	\cup \mathbb{P}^1_{[y_{i_{k}}]}$. It remains to  check that $T$ charges positive mass in each of these components.
	
	Indeed, first of all,  our algorithm guarantees that
	\[
	|\Lambda\cap \A_{r_{Z^{I}_{2j-1}}}
\cap
\pi_0^{-1}([y_{i_\ell}])
|\geqslant\K\cdot r_{Z^{I}_{2j-1}}^2
\qquad(\ell=1, \dots, k).
	\]
	By~\eqref{small disc low bound}, for any fixed small  $\epsilon>0$, for large $j\gg 1$ and $i=r_{Z^{I}_{2j-1}}$,
	for any $\lambda\in \Lambda\cap \A_{r_{i}}
	\cap
	\pi_0^{-1}([y_{i_\ell}])$, the holomorphic disc $f(\mathbb{D}(\lambda,\epsilon))$
	is contained in $\pi_1^{-1}(\D([y_{i_\ell}], \epsilon))$ with  area
	$\int_{\mathbb{D}(\lambda,\epsilon)}
	f^*
	\omega_X\geqslant \K$  bounded from below by some uniformly positively constant independent of $\epsilon$.
	Thus the total area  of such discs is $\geqslant \K\cdot r_i^2$. Noting that ~\eqref{disc image area is bounded} implies $\int_{\D(r_i)}\leqslant \K \cdot r_i^2$,
	thus $T$ charges mass $\geqslant \K$ on $\pi_1^{-1}(\D([y_{i_\ell}], \epsilon))$.
	Letting $\epsilon\rightarrow 0$, we conclude that $T$ charges positive mass on $\mathbb{P}_{[y_{i_\ell}]}^1$. Lastly, by the same argument as the Step 1 of Observation~\ref{obs 6.2}, we see that
	$T$ charges positive mass on $\mathcal{C}_{\infty}$.
Thus we conclude the proof.
\end{proof}

\begin{obs}
	Fix any finite subset $I=\{i_1,\dots,i_k\}\subset \mathbb{Z}_+$ having cardinality  $k\geqslant 1$.
	Then from the sequence of radii $\{r_{Z^{I}_{2j}}\}_{
		j\geqslant 1}$, one receives an Ahlfors current $T$
	having the shape 
	\[
	T=a_{\infty}\cdot[\mathcal{C}_{\infty}]+
	\sum_{\ell=1}^k
	a_{i_{\ell}}\cdot
	[\mathbb{P}^1_{[y_{i_{\ell}}]}]
	+
	T_{\diff},
	\]
	where $a_{\infty}$, $a_{i_{\ell}}$ ($1\leqslant \ell\leqslant k$) are some positive constants and where $T_{\diff}$ is a nontrivial diffuse part.
\end{obs}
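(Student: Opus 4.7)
The plan is to combine the strategies of Observations~\ref{obs 6.2} and~\ref{obs 6.3}, since Case II' in Section~\ref{section: algorithm} is engineered as a hybrid: for $i=Z^{I}_{2j}$, each $y_{i_\ell}$ receives at least $[\frac{|\A_{r_i}\cap c\Gamma|}{2k}]$ of the $x_{\mu}$'s, while the remaining at least $\tfrac{1}{2}|\A_{r_i}\cap c\Gamma|$ points are distributed sparsely in $\mathcal{D}_{\mathsf{R}}$. So the singular part is inherited from the concentrated $x_\mu$'s (as in Observation~\ref{obs 6.3}) and the diffuse part should come from the sparse ones (as in Observation~\ref{obs 6.2}).

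First, I would establish that $T$ charges positive mass on each of $\mathcal{C}_{\infty},\PP_{[y_{i_1}]},\dots,\PP_{[y_{i_k}]}$. For $\mathcal{C}_{\infty}$, this is identical to Step~1 of Observation~\ref{obs 6.2}: points $z\in \D_{r_i/3}\setminus\D_{r_{i-1}+2}$ are sent near $\mathcal{C}_{\infty}$ by Subsection~\ref{f(z) near infinity curve}, and combining~\eqref{disc area is large enough} with~\eqref{disc image area is bounded} gives a uniformly positive contribution to any neighborhood of $\mathcal{C}_{\infty}$. For each $\PP_{[y_{i_\ell}]}$, Case II' of the algorithm guarantees $|\Lambda\cap\A_{r_i}\cap\pi_0^{-1}([y_{i_\ell}])|\geqslant \K\cdot r_i^2$ for $i=Z^I_{2j}$, and applying~\eqref{small disc low bound} at each such $\lambda$ yields positive mass on $\pi_1^{-1}(\D([y_{i_\ell}],\epsilon))$ uniformly in $\epsilon$, exactly as in Observation~\ref{obs 6.3}.

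Next, I would verify that $T$ charges no other algebraic curve. For horizontal curves $D\neq\mathcal{C}_{\infty}$, the bound~\eqref{very subtle estimate} holds independently of the distribution of the $x_{\mu}$'s, so Step~2 of Observation~\ref{obs 6.2} applies verbatim. For a vertical curve $\PP_a$ with $a\notin\{[y_{i_1}],\dots,[y_{i_k}]\}$, I would pick $\epsilon>0$ so small that $\D(a,2\epsilon)$ is disjoint from $\{[y_{i_1}],\dots,[y_{i_k}]\}$; then only the sparsely distributed $x_{\mu}$'s can produce $\lambda\in\Lambda$ with $[\lambda]\in\D(a,2\epsilon)$, so~\eqref{distribute sparsely} bounds the count by $\K\cdot\epsilon^2\cdot r_i^2$, and the remaining reasoning of Step~2 of Observation~\ref{obs 6.2} goes through without change.

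The crux of the statement, and what I expect to be the main obstacle, is the nontriviality of $T_{\diff}$. The plan is to choose a small open neighborhood $U\supset \mathcal{C}_{\infty}\cup\bigcup_{\ell=1}^{k}\PP_{[y_{i_\ell}]}$ of the form $U=U'\cup\bigcup_{\ell=1}^{k}\pi_1^{-1}(\D([y_{i_\ell}],\epsilon_0))$, where $U'$ is a neighborhood of $\mathcal{C}_{\infty}$ with $\overline{U'}\cap\mathcal{C}_0=\varnothing$. This decomposition is essential because Observation~\ref{obs 4.2} requires the ambient neighborhood to avoid $\mathcal{C}_0$, whereas our $U$ does meet $\mathcal{C}_0$ at the points $([y_{i_\ell}], 0\oplus 1)$. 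For $j\gg 1$ and $i=Z^{I}_{2j}$, the $\geqslant \K\cdot r_i^2$ sparsely-distributed $\lambda\in \Lambda\cap\A_{r_i}$ project by $\pi_0$ into $\mathcal{D}_{\mathsf{R}}$, which by~\eqref{choose y_i} stays a uniform positive distance from $\{[y_{i_\ell}]\}_{\ell=1}^{k}$. Choosing $\delta'>0$ small enough that $\pi_1(f(\D(\lambda,\delta')))$ avoids $\bigcup_{\ell}\D([y_{i_\ell}],\epsilon_0)$, we have $f(\D(\lambda,\delta'))\setminus U=f(\D(\lambda,\delta'))\setminus U'$, and Observation~\ref{obs 4.2} applied to $U'$ yields a uniform positive area $\geqslant \K_{U'}$ outside $U$ at each such sparse $\lambda$. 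Summing these contributions and dividing by~\eqref{disc image area is bounded} forces $T$ to charge positive mass on $X\setminus U$. Since the previous step rules out any algebraic curve from supporting this mass, Siu's decomposition forces $T_{\diff}$ to be nontrivial.
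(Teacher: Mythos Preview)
Your proposal is correct and follows essentially the same three-step structure as the paper's proof: Step~1 (positive mass on $\mathcal{C}_{\infty}$ and each $\mathbb{P}^1_{[y_{i_\ell}]}$) by reference to Observations~\ref{obs 6.2} and~\ref{obs 6.3}; Step~2 (no charge on other algebraic curves) by repeating Step~2 of Observation~\ref{obs 6.2}; and Step~3 (mass outside the singular support) by exploiting that the sparsely-distributed $\lambda$'s project into $\mathcal{D}_{\mathsf{R}}$, hence stay uniformly away from the $[y_{i_\ell}]$'s, so Observation~\ref{obs 4.2} applied to the $\mathcal{C}_{\infty}$-neighborhood alone suffices. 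Your explicit decomposition $U=U'\cup\bigcup_{\ell}\pi_1^{-1}(\D([y_{i_\ell}],\epsilon_0))$ and the observation that Observation~\ref{obs 4.2} must be applied to $U'$ rather than $U$ are exactly the refinements the paper makes over the argument in Observation~\ref{obs 6.2}.
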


\begin{proof}
	{\sl Step 1}: $T$ charges positive mass along $\mathcal{C}_{\infty},  \mathbb{P}^1_{[y_{i_{1}}]}, \cdots,  \mathbb{P}^1_{[y_{i_{k}}]}$.
	
	This follows from the same arguments as in the preceding proof.
	
	\smallskip
	
	{\sl Step 2}: $T$ does not charge any other algebraic curve. 
	
We can check it by using the same arguments as in the Step 2 of Observation~\ref{obs 6.2}.
	
	\smallskip
	
	{\sl Step 3}: $T$ has positive mass outside $\mathcal{C}_{\infty}\cup \mathbb{P}^1_{[y_{i_{1}}]}
	\cup
	\cdots
	\cup \mathbb{P}^1_{[y_{i_{k}}]}$. 
	
	The argument is similar to the Step 3 of Observation~\ref{obs 6.2}. The key point is that, by our algorithm,
	 \[
	|
	(\mathcal{D}_{\mathsf{R}}+\Gamma)\cap \Lambda\cap \D_{r_{Z^{I}_{2j}}}
	|
	\geqslant
	\K
	\cdot
	r_{Z^{I}_{2j}}^2,
	\]
	and for every  $\lambda\in (\mathcal{D}_{\mathsf{R}}+\Gamma)\cap \Lambda\cap \D_{r_{Z^{I}_{2j}}}$,
	$[\lambda]$ keeps uniform distances $\geqslant \epsilon>0$ to $[y_{i_1}], \dots, [y_{i_k}]$.
	Assuming moreover that $j\gg 1$,  in the same notation as Observation~\ref{obs 4.2} we receive
	\[
		\area\Big(
		(X\setminus U)
		\cap
		f\big(
		\D(\lambda, \epsilon/2)
		\big)
		\Big)_{\omega_{X}}
		\geqslant
		\K_U.
\]
Note that 
$
f\big(
\D(\lambda, \epsilon/2)
\big)$
stays away from
$\pi_1^{-1}(\D([y_{i_\ell}], \epsilon/2))$ for all $\ell=1, \dots, k$. Thus by the same argument as the preceding proof, 
we see that $T$ charges positive mass outside $U\cup \big(\cup_{\ell=1}^k \pi_1^{-1}(\D([y_{i_\ell}], \epsilon/2))\big)$.
\end{proof}

By much the same arguments, we have the following two results.

\begin{obs}
	From the sequence of radii $\{r_{Z^{\mathbb{Z}_{+}}_{2j-1}}\}_{
		j\geqslant 1}$, one receives an Ahlfors current $T$
	having the shape 
	\[
	T=a_{\infty}\cdot[\mathcal{C}_{\infty}]+
	\sum_{\ell= 1}^{\infty}
	a_{{\ell}}\cdot
	[\mathbb{P}^1_{[y_{{\ell}}]}],
	\]
	where $a_{\infty}$, $a_{{\ell}}$ ($\ell\geqslant 1$) are  positive numbers.
	\qed
\end{obs}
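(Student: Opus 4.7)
The plan is to mirror the proof architecture of Observation~\ref{obs 6.3}, with the key new ingredient being that the convergent tail $\sum_{\ell>N}\alpha_{\ell}\to 0$ controls the contribution from the infinitely many fibers. The three standard sub-tasks are: (a) localize the support of $T$ on $\mathcal{C}_{\infty}\cup\bigcup_{\ell\geqslant 1} \mathbb{P}^1_{[y_{\ell}]}$; (b) verify positive mass $a_{\ell}$ on each $\mathbb{P}^1_{[y_{\ell}]}$; (c) verify positive mass $a_{\infty}$ on $\mathcal{C}_{\infty}$, and rule out any remaining diffuse or parasitic contribution.

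For (a), fix an integer $N\geqslant 1$ and let $U_N$ be any open neighborhood of $\mathcal{C}_{\infty}\cup\bigcup_{\ell=1}^{N}\mathbb{P}^1_{[y_{\ell}]}$. For $j\gg 1$ and $i=Z^{\mathbb{Z}_+}_{2j-1}$, I would partition $\D_{r_i}\setminus\D_{r_{i-1}+2}$ into three regions: points $z$ with $\text{dist}(z,\Lambda)\geqslant\epsilon$ (which land near $\mathcal{C}_{\infty}$ by Subsection~\ref{f(z) near infinity curve}); small discs $\D(\lambda,\epsilon)$ with $x_{\lambda}=y_{\ell}$ and $\ell\leqslant N$ (which land in $\pi_1^{-1}(\D([y_{\ell}],\epsilon))\subset U_N$); and the remaining discs $\D(\lambda,\epsilon)$ with $x_{\lambda}=y_{\ell}$ and $\ell>N$. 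By Case~III of the algorithm, the number of $\lambda\in\A_{r_i}\cap\Lambda$ of the last type is at most $\bigl(\sum_{\ell>N}\alpha_{\ell}+o(1)\bigr)\cdot|\A_{r_i}\cap c\Gamma|$, so by~\eqref{small disc high bound} the total $f^*\omega_X$-area of that third region is $\leqslant \K\cdot\bigl(\sum_{\ell>N}\alpha_{\ell}\bigr)\cdot r_i^2+o(r_i^2)$. Combined with~\eqref{disc area is large enough} and~\eqref{disc image area is bounded}, this yields $T(X\setminus U_N)\leqslant \K\cdot\sum_{\ell>N}\alpha_{\ell}$.

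For (b), by the algorithm $|\Lambda\cap\A_{r_i}\cap\pi_0^{-1}([y_{\ell}])|\geqslant [\alpha_{\ell}\cdot|\A_{r_i}\cap c\Gamma|]\geqslant \K\alpha_{\ell}r_i^2$ for $i\gg 1$; by~\eqref{small disc low bound} each such $\lambda$ contributes uniform area $\geqslant\K>0$ near $\mathbb{P}^1_{[y_{\ell}]}$, so~\eqref{disc image area is bounded} gives $T$-mass $a_{\ell}\geqslant\K\alpha_{\ell}>0$ along $\mathbb{P}^1_{[y_{\ell}]}$. For (c), the positive mass $a_{\infty}$ on $\mathcal{C}_{\infty}$ follows verbatim from the Step 1 argument in Observation~\ref{obs 6.2}.

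Finally, to pin down the claimed shape, I would let $N\to\infty$ in (a) to conclude that the diffuse part vanishes and that $T$ does not charge any fiber $\mathbb{P}^1_a$ with $a\notin\{[y_{\ell}]\}_{\ell\geqslant 1}$: any such $a$ is separated from $\{[y_{\ell}]\}_{\ell\leqslant N}$ by a neighborhood (so the $T$-mass there is bounded by $\K\sum_{\ell>N}\alpha_{\ell}\to 0$), and thus the Lelong number along $\mathbb{P}^1_a$ is zero. Horizontal curves other than $\mathcal{C}_{\infty}$ are ruled out exactly as in Step 2 of Observation~\ref{obs 6.2}. Siu's decomposition then assembles $T$ into the claimed form. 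The main obstacle, as I see it, is precisely this tail bookkeeping: one must ensure that in the weak limit, the mass coming from $\lambda$'s with $x_{\lambda}=y_{\ell}$, $\ell>N$, does not secretly accumulate on a limit fiber $\mathbb{P}^1_a$ with $a\in\overline{\{[y_{\ell}]\}}\setminus\{[y_{\ell}]\}$ nor produce a diffuse component---the summability $\sum_{\ell\geqslant 1}\alpha_{\ell}=1$ is exactly what is designed to prevent this.
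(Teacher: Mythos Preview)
Your proposal is correct and is precisely the adaptation the paper has in mind when it writes ``By much the same arguments'' and places a \qed; the only new wrinkle beyond Observation~\ref{obs 6.3} is the tail control via $\sum_{\ell>N}\alpha_\ell\to 0$, which you identify and use exactly as needed. Your final step---shrinking $U_N$ down to $\mathcal{C}_\infty\cup\bigcup_{\ell=1}^N\mathbb{P}^1_{[y_\ell]}$ with the constant $\K$ in the bound $T(X\setminus U_N)\leqslant \K\sum_{\ell>N}\alpha_\ell$ independent of $U_N$, then letting $N\to\infty$ and invoking that the diffuse part of the Siu decomposition puts no mass on any individual curve---is the right way to kill both the diffuse remainder and any parasitic fiber over an accumulation point of $\{[y_\ell]\}$.
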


\begin{obs}
	\label{obs 6.6}
	From the sequence of radii $\{r_{Z^{\mathbb{Z}_{+}}_{2j}}\}_{
		j\geqslant 1}$, one receives an Ahlfors current $T$
	having the shape 
	\[
	T=a_{\infty}\cdot[\mathcal{C}_{\infty}]+
	\sum_{\ell= 1}^{\infty}
	a_{{\ell}}\cdot
	[\mathbb{P}^1_{[y_{{\ell}}]}]
	+
T_{\diff},
	\]
	where $a_{\infty}$, $a_{{\ell}}$ ($\ell\geqslant 1$) are  positive numbers,
	and where $T_{\diff}$ is a nontrivial diffuse part.
	\qed
\end{obs}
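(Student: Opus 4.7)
The plan is to run the three-step template that has been applied to the earlier Observations~\ref{obs 6.2}--\ref{obs 6.3}, only now with the infinite set of distinguished fibers and with the sparse residual points of Case~III$'$ providing the diffuse mass. Throughout I abbreviate $i=Z^{\mathbb{Z}_+}_{2j}$.

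\medskip

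\noindent\textbf{Step 1: positive mass on each prescribed component.} For $\mathcal{C}_{\infty}$ I would copy the argument of Step~1 of Observation~\ref{obs 6.2}: points of $\D_{r_i/3}\setminus\D_{r_{i-1}+2}$ are mapped into any preassigned neighborhood of $\mathcal{C}_{\infty}$ by Subsection~\ref{f(z) near infinity curve}, while~\eqref{disc area is large enough} and~\eqref{disc image area is bounded} make the corresponding mass a definite fraction of the total. For each fixed $\ell\geqslant 1$, Case~III$'$ of the algorithm puts at least $[\tfrac{\alpha_\ell}{2}|\A_{r_i}\cap c\Gamma|]\geqslant \K\alpha_\ell r_i^2$ zeros of $\psi$ into $\pi_0^{-1}([y_\ell])$; invoking the lower bound~\eqref{small disc low bound} on each small disc $\D(\lambda,\epsilon)$ and summing, exactly as in Observation~\ref{obs 6.3}, produces mass $\geqslant \K\alpha_\ell$ inside $\pi_1^{-1}(\D([y_\ell],\epsilon))$, hence positive mass on $\mathbb{P}^1_{[y_\ell]}$ after letting $\epsilon\to 0$.

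\medskip

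\noindent\textbf{Step 2: no other algebraic curve is charged.} For any horizontal irreducible $D\neq \mathcal{C}_{\infty}$, the local estimate~\eqref{very subtle estimate} from Subsection~\ref{estimate area near horizontal curves} applies verbatim and forces $T$ to vanish on $D$ upon sending $\epsilon\to 0$. For a vertical curve $\mathbb{P}^1_a$ with $[a]\notin\{[y_\ell]\}_{\ell\geqslant 1}$, I would combine two cardinality estimates: the $x_\mu=y_\ell$ contributions total at most $\sum_\ell [\alpha_\ell |\A_{r_i}\cap c\Gamma|]\leqslant \K r_i^2$ but are concentrated at the points $[y_\ell]$ away from $[a]$, while the sparsely distributed residual $x_\mu$'s in $\mathcal{D}_\mathsf{R}$ satisfy~\eqref{distribute sparsely}, yielding $|\pi_0^{-1}(\D(a,2\epsilon))\cap\Lambda\cap \D_{r_i}|\leqslant \K\epsilon^2 r_i^2$ for $\epsilon$ small enough that $\D(a,2\epsilon)$ avoids the finitely many $[y_\ell]$ in its vicinity. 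Combined with~\eqref{small disc high bound} and~\eqref{disc area is large enough}, this gives $T$-mass $\leqslant \K\epsilon^2$ on $\pi_1^{-1}(\D(a,\epsilon))\setminus U$; letting $\epsilon\to 0$ concludes.

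\medskip

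\noindent\textbf{Step 3: nontrivial diffuse part.} This is where Case~III$'$ is decisive. The residual $x_\mu\in\mathcal{D}_\mathsf{R}$ account for a definite fraction of $|\A_{r_i}\cap c\Gamma|$, so
\[
\bigl|(\mathcal{D}_\mathsf{R}+\Gamma)\cap\Lambda\cap\D_{r_i}\bigr|\geqslant \K\cdot r_i^2.
\]
Because the strip $\{1/6\leqslant\operatorname{Re}<1/3\}$ containing all $y_\ell$ is separated from $\mathcal{D}_\mathsf{R}=\{1/2\leqslant\operatorname{Re}<1\}$ by the gap $[1/3,1/2)$, every such $\lambda$ has $[\lambda]$ at uniform distance $\geqslant \epsilon_0>0$ from every $[y_\ell]$. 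Choose any open neighborhood $U$ of $\mathcal{C}_\infty$ disjoint from $\mathcal{C}_0$, set $V:=U\cup\bigcup_{\ell\geqslant 1}\pi_1^{-1}(\D([y_\ell],\epsilon_0/2))$, and apply Observation~\ref{obs 4.2}: for each such $\lambda$ with $|\lambda|\gg 1$, the disc $f(\D(\lambda,\epsilon_0/2))$ contributes $\geqslant \K_U$ area inside $X\setminus V$. Summing over the $\geqslant \K r_i^2$ such $\lambda$ and normalizing by the total area bound~\eqref{disc image area is bounded}, the limit current $T$ carries positive mass on $X\setminus V$, hence a nontrivial diffuse part.

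\medskip

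\noindent The only genuinely delicate ingredient is Step~3, specifically verifying that the discs $\D(\lambda,\epsilon_0/2)$ for distinct $\lambda\in(\mathcal{D}_\mathsf{R}+\Gamma)\cap\Lambda$ make genuinely independent contributions inside $X\setminus V$; this is already handled by Observation~\ref{obs 4.2} (via Wirtinger in local charts of a neighborhood of $\mathcal{C}_0$) once we know $[\lambda]$ stays away from $\{[y_\ell]\}$, which the strip separation of~\eqref{choose y_i} provides uniformly in $\ell$.
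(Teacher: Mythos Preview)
Your three-step template is exactly what the paper intends by ``much the same arguments'', and Steps~1 and~3 go through as written (the strip separation in~\eqref{choose y_i} gives the uniform gap $\epsilon_0$ you need for Step~3, and your reduction from $X\setminus U$ to $X\setminus V$ via $f(\D(\lambda,\epsilon_0/2))\subset\pi_1^{-1}(\D([\lambda],\epsilon_0/2))$ is correct).

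There is one small slip in Step~2 for vertical curves. Because the $\{y_\ell\}$ are infinite and the paper imposes no discreteness on them beyond being distinct, the point $[a]$ may lie in the closure of $\{[y_\ell]\}$, and then no choice of $\epsilon>0$ makes $\D(a,2\epsilon)$ avoid all but ``finitely many $[y_\ell]$ in its vicinity''. The repair is immediate from the convergence $\sum_\ell\alpha_\ell=1$: given $\delta>0$, first pick $L$ with $\sum_{\ell>L}\alpha_\ell<\delta$, so that by~\eqref{small disc high bound} the total area coming from all $\lambda$ with $x_\mu\in\{y_\ell:\ell>L\}$ is at most $\K\delta\cdot r_i^2$; then, since $[a]\neq[y_1],\dots,[y_L]$, choose $\epsilon$ small enough that $\D(a,2\epsilon)$ avoids these finitely many points, after which your sparse-distribution bound~\eqref{distribute sparsely} handles the $\mathcal{D}_{\mathsf{R}}$ contribution exactly as you wrote. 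Letting $\epsilon\to 0$ and then $\delta\to 0$ finishes the argument.
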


Thus we prove Theorems~\ref{thm 1},~\ref{thm 2}.

\section{\bf Examples}
\label{section: examples}
\subsection{Diffuse Ahlfors currents}

Let $\mathcal{A}=\mathbb{C}/\Lambda\times\mathbb{C}/\Lambda$ be the surface obtained as the product of two elliptic curves where $\Lambda$ is a lattice. 
Fix a reference metric $\omega_{\mathcal{A}}:=
\dif \dif^c |z_1|^2
+
\dif \dif^c |z_2|^2$ on ${\mathcal{A}}$.
 Choose an irrational number $\lambda\in\mathbb{R}\setminus\mathbb{Q}$. Consider the holomorphic curve $f:\mathbb{C}\longrightarrow\mathcal{A}$ given by $f(z)=([z],[\lambda z])$. 

\begin{pro}
	\label{diffuse in Abelian surface}
	Any Ahlfors current $T$ of $f$ is diffuse.
\end{pro}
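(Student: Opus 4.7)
The plan is to exploit the group-theoretic structure of $f$. Observe that $f$ lifts to the complex-linear map $\tilde f(z)=(z,\lambda z)$ from $\mathbb{C}$ to $\mathbb{C}^{2}$, so $f:\mathbb{C}\to\mathcal{A}$ is a homomorphism of abelian Lie groups. A direct computation gives $f^{*}\omega_{\mathcal{A}}=(1+\lambda^{2})\,\dif\dif^{c}(|z|^{2})$, so $\area_{\omega_{\mathcal{A}}}f(\D_{r})=\K\cdot r^{2}$ and the length-area condition holds along every sequence $r_n\nearrow\infty$, so Ahlfors currents exist and all come from rescalings of $\frac{[f(\D_r)]}{\K\cdot r^2}$.

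The heart of the argument is to prove that every Ahlfors current $T$ of $f$ is invariant under all translations of $\mathcal{A}$. Fix $w\in\mathbb{C}$, and write $\tau_{w}$, $\tau_{f(w)}$ for translation by $w$ on $\mathbb{C}$ and by $f(w)$ on $\mathcal{A}$. Since $f$ is a group homomorphism, $f\circ\tau_{w}=\tau_{f(w)}\circ f$, whence $f(\tau_{w}\D_{r})=\tau_{f(w)}f(\D_{r})$. The symmetric difference $\D_{r}\triangle\tau_{w}\D_{r}$ has Euclidean area at most $\K_{w}\cdot r$ for large $r$, so for any smooth test $(1,1)$-form $\phi$ on $\mathcal{A}$,
\[
\Bigl|\,\int_{\D_{r}}f^{*}\phi-\int_{\tau_{w}\D_{r}}f^{*}\phi\,\Bigr|\leqslant \K_{w,\phi}\cdot r.
\]
Normalizing by $\area_{\omega_{\mathcal{A}}}f(\D_{r})=\K\cdot r^{2}$ and passing to the weak limit along the subsequence defining $T$, we deduce $T=(\tau_{f(w)})_{*}T$ for every $w\in\mathbb{C}$. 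Since $\lambda\notin\mathbb{Q}$, Kronecker's theorem implies that $\{f(w):w\in\mathbb{C}\}$ is dense in $\mathcal{A}$, and continuity of the push-forward action on currents then upgrades this to invariance of $T$ under the full translation group of $\mathcal{A}$.

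Diffuseness now follows via Siu's decomposition $T=T_{\sing}+T_{\diff}$: by uniqueness, both summands inherit the translation invariance of $T$. If $T_{\sing}=\sum_{\ell\in I}c_{\ell}\,[C_{\ell}]$ were nontrivial, translation invariance would force, for every $a\in\mathcal{A}$, the translate $a+C_{\ell}$ to appear in the singular support with the same coefficient $c_{\ell}$; this gives uncountably many distinct irreducible curves charged by $T_{\sing}$, which is impossible for a positive closed $(1,1)$-current of finite mass. Hence $T_{\sing}=0$ and $T$ is diffuse. The only mildly delicate point is verifying that the symmetric-difference contribution is negligible at the level of currents, but this reduces to the uniform bound on $\|f^{*}\phi\|_{\infty}$ together with the elementary estimate $\area(\D_{r}\triangle\tau_{w}\D_{r})=O_{w}(r)$; everything else is a standard consequence of $f$ being a homomorphism with irrational slope, plus the equivariance of Siu's decomposition.
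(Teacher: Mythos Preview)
Your proof is correct and takes a genuinely different route from the paper's. The paper proceeds by first invoking Duval's theorem that any curve in the singular support of an Ahlfors current must be rational or elliptic, then rules out rational curves trivially, and for each elliptic curve $\iota(\mathbb{C}/\Gamma_3)\subset\mathcal{A}$ performs an explicit count of intersection points $|\iota(\mathbb{C}/\Gamma_3)\cap f(\D_r)|\leqslant \K\cdot r^2$ via the affine description of maps between tori, combined with a shrinking-neighborhood argument to show each intersection contributes negligible area.

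Your approach instead exploits the group structure directly: the homomorphism property $f\circ\tau_w=\tau_{f(w)}\circ f$, together with the $O_w(r)$ bound on $\area(\D_r\triangle\tau_w\D_r)$ against the $\K r^2$ total area, yields translation invariance of $T$ under the dense subgroup $f(\mathbb{C})$, hence under all of $\mathcal{A}$. The equivariance of Siu's decomposition then forces $T_{\sing}=0$ by the countability of its support. This line of argument is shorter, avoids Duval's theorem entirely, and in fact proves more: a translation-invariant positive closed $(1,1)$-current on $\mathcal{A}$ is necessarily a smooth form with constant coefficients, so you have also established uniqueness of the Ahlfors current (it must equal the normalized pullback of $\dif\dif^c|z_1|^2+\lambda^2\dif\dif^c|z_2|^2+\lambda(\dif\dif^c(z_1\bar z_2)+\dif\dif^c(\bar z_1 z_2))$ up to scaling). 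The paper's intersection-counting method, on the other hand, gives quantitative information about how $f(\D_r)$ meets a given curve, which is closer in spirit to the techniques used elsewhere in the paper for the surface $X$.
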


\begin{proof}
Since there is no nonconstant holomorphic map from $\mathbb{P}^1(\mathbb{C})$ to an elliptic curve,  $\mathcal{A}$ contains no rational curve.
Hence
by a theorem of Duval \cite{Duval2006}, it suffices to check that $T$ charges zero mass along any elliptic curve in $\mathcal{A}$.

\smallskip
\noindent
{\bf Fact}
(c.f. \cite[Prop.~1.3.2]{Diamond-Shurman2005}).
 {\it
Let $\Phi 
:
\mathbb{C}/\Gamma_1 
\longrightarrow
 \mathbb{C}/\Gamma_2
$
be a holomorphic map between
complex tori. Then there exist complex numbers $m, b$ with
$m\Gamma_1\subset \Gamma_2$,
such that
$\Phi([z]) 
= [mz+b]$.
} 

\smallskip
Therefore, any nonconstant holomorphic map  $\iota: \mathbb{C}/\Gamma_3\rightarrow \mathbb{C}/\Lambda\times\mathbb{C}/\Lambda$ from an elliptic curve
to $\mathcal{A}$ can be written explicitly as
$\iota([z])=([m_1 z+b_1], [m_2 z+ b_2])$
for some complex numbers $m_1, m_2, b_1, b_2$, such that $(m_1, m_2)\neq (0, 0)$ and $m_1\Gamma_3, m_2\Gamma_3\subset \Gamma$. Hence $m_2-\lambda m_1\neq 0$.
We claim that the  intersection numbers
\begin{equation}
	\label{counting intersection nubmers on tori}
|\iota(\mathbb{C}/\Gamma_3)\cap f(\D_{r})|\leqslant \K\cdot r^2
\end{equation}
for $r\gg 1$.
Indeed, we can find
a large disc $\D_R$ containing a fundamental domain of $\Gamma_3$.
Then for $z\in \D_r, y\in \D_R$ with $([z], [\lambda z])=([m_1 y+b_1], [m_2 y+ b_2])$,
we receive that 
\begin{equation}
	\label{solve the linear equation}
z-(m_1 y+b_1)=\lambda_1, \qquad
\lambda z-(m_2 y+ b_2)=\lambda_2
\end{equation}
 for some $\lambda_1, \lambda_2\in\Lambda$ having absolute values less than $ r+\K, |\lambda|\cdot r+\K$ respectively.
 
 Since $m_2-\lambda m_1\neq 0$, we can solve the linear equation~\eqref{solve the linear equation} as
 \[
 z=\frac{m_2(\lambda_1+b_1)-m_1(\lambda_2+b_2)}{m_2-\lambda m_1},
 \qquad
 y=
 \frac{\lambda(\lambda_1+b_1)-(\lambda_2+b_2)}{m_2-\lambda m_1}.
 \]
 Noting that $y\in \D_R$,
 for any fixed $\lambda_1$, the cardinality of possible choices
 of 
 \[
 \lambda_2
 \in
 \big(
 (-m_2+\lambda m_1)\cdot 
 \D_R
 +
 \lambda(\lambda_1+b_1)-b_2
 \big)
 \cap \Lambda
 \] is 
 $\leqslant \K$.
Thus the cardinality of possible choices of such $(\lambda_1, \lambda_2)\in \Lambda\times \Lambda$
is $\leqslant \K\cdot (r+\K)^2\cdot \K\leqslant \K \cdot r^2$. Hence the estimate~\eqref{counting intersection nubmers on tori} is proved.

By the compactness of $\iota(\mathbb{C}/\Gamma_3)$, and by shrinking neighborhood $U$ of $\iota(\mathbb{C}/\Gamma_3)$ if necessary, each intersection point  corresponds to a small area $o(1)$ component of $f(\D_r)\cap U$, thus
the total area of $f(\D_r)\cap U$
is $\leqslant o(1)\K\cdot r^2$.
However, the area growth of $f(\D_r)$ is $\K_{\lambda}\cdot r^2$. Thus any obtained Ahlfors current of $f$ charges  mass
$\leqslant  o(1)\K$ on $U$. By shrinking $U$, we know that $T$ charges zero mass
 along $\iota(\mathbb{C}/\Gamma_3)$.
 Hence
 we conclude the proof.
 \end{proof}

	Take a holomorphic surjective map $\pi_2: \mathcal{A}\longrightarrow \mathbb{P}^2(\mathbb{C})$, which induces an entire curve
	\[
	f_2:=\pi_2\circ f:\,\, \mathbb{C}\,\,\longrightarrow\,\, \mathbb{P}^2(\mathbb{C}).
	\]
		Since $\pi_2^*\omega_{\FS}\geqslant 0$ is closed,
	by the geometry of $\mathcal{A}$, in the cohomology class $[\pi_2^*\omega_{\FS}]$
	we can find a harmonic representative
	\[
	\omega=a_1 \sqrt{-1}\dif z_1\wedge \dif \overline{z_1}
	+
	a_2 \sqrt{-1}
	\dif z_2\wedge \dif \overline{z_2}
	+
	a_3 \sqrt{-1}\dif z_1\wedge \dif \overline{z_2}
	+
	a_4 \sqrt{-1}\dif z_2\wedge \dif \overline{z_1}
	\geqslant 0
	\]
	for some constants  $a_1, a_2, a_3, a_4$. Thus
	$f^*\omega=K \sqrt{-1}\dif z\wedge\dif \overline{ z}
	\geqslant 0$ where
	\[
	K=a_1+a_2 \lambda^2+\lambda(a_3+a_4)\geqslant 0.
	\]
	Since 
	$a_1, a_2, a_3+a_4$ cannot vanish simultaneously,
	 at most one $\lambda$ in
	$ \mathbb{R}\setminus \mathbb{Q}$ can make $K=0$. Now we only choose  $\lambda\in \mathbb{R}\setminus \mathbb{Q}$
	such that $K>0$.
	
	\begin{pro}
		\label{diffuse Ahlfors currents in Cp2}
	Any Ahlfors current $T_2$ of $f_2$ is diffuse.
		\end{pro}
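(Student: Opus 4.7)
The plan is to reduce the statement to Proposition~\ref{diffuse in Abelian surface} by exhibiting every Ahlfors current of $f_2$ as a positive multiple of the direct image $(\pi_2)_*T$ of some Ahlfors current $T$ of $f$ on $\mathcal{A}$, and then transferring diffuseness across the generically finite map $\pi_2$.

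First I would nail down the two relevant area growths. Since $f(z)=([z],[\lambda z])$, one has exactly $A(r):=\int_{\mathbb{D}_r}f^*\omega_{\mathcal{A}}=(1+\lambda^2)\pi r^2$, so the length--area condition for $f$ with respect to $\omega_{\mathcal{A}}$ is automatic along any sequence $r_n\nearrow\infty$. The choice of the harmonic representative $\omega$ of $[\pi_2^*\omega_{FS}]$ lets me write $\pi_2^*\omega_{FS}=\omega+dd^c\varphi$ for some smooth function $\varphi$ on the compact torus $\mathcal{A}$ via the $dd^c$-lemma. A direct Stokes computation on $\partial\mathbb{D}_r$, combined with boundedness of $\nabla(\varphi\circ f)$, gives $\int_{\mathbb{D}_r}f^*(dd^c\varphi)=O(r)$; together with $f^*\omega=K\,\tfrac{i}{2}dz\wedge d\bar z$ and $K>0$ this yields
\[
A_2(r):=\int_{\mathbb{D}_r}f_2^*\omega_{FS}=K\pi r^2+O(r),
\]
so that $A_2(r)/A(r)\to K':=K/(1+\lambda^2)>0$ as $r\to\infty$.

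Given an Ahlfors current $T_2$ coming from a sequence $\{r_n\}$ satisfying the length--area condition for $f_2$, by weak compactness of unit-mass positive currents on $\mathcal{A}$ I would extract a subsequence along which $[f(\mathbb{D}_{r_n})]/A(r_n)\rightharpoonup T$ for some positive $(1,1)$-current $T$. The automatic length--area condition for $f$ makes $T$ closed and of unit mass, hence itself an Ahlfors current of $f$; by Proposition~\ref{diffuse in Abelian surface}, $T$ is diffuse. Since $(\pi_2)_*[f(\mathbb{D}_{r_n})]=[f_2(\mathbb{D}_{r_n})]$ and pushforward is continuous in the weak topology, the asymptotic $A(r_n)/A_2(r_n)\to 1/K'$ forces $T_2=K'^{-1}(\pi_2)_*T$.

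It remains to propagate diffuseness from $T$ to $T_2$. Being a holomorphic surjection between compact complex surfaces, $\pi_2$ is generically finite. For any algebraic curve $C\subset\mathbb{P}^2(\mathbb{C})$ the preimage $\pi_2^{-1}(C)$ is an algebraic curve in $\mathcal{A}$, and each of its irreducible components carries zero generic Lelong number of $T$ by diffuseness. At a generic point $y\in C$---smooth on $C$ and a regular value of $\pi_2$---the generic Lelong number of $(\pi_2)_*T$ along $C$ equals the sum over the unramified preimages $x_i\in\pi_2^{-1}(y)$ of the generic Lelong numbers of $T$ along the components of $\pi_2^{-1}(C)$ through $x_i$, which all vanish. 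Hence $(\pi_2)_*T$ charges no algebraic curve, and so $T_2$ is diffuse. I expect the main delicate points to be the area asymptotic $A_2(r)=K\pi r^2+o(r^2)$ via the harmonic representative and Stokes' theorem, together with the clean Lelong-number accounting under the generically finite map $\pi_2$; both are manageable but essential for the reduction to go through.
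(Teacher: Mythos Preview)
Your argument is correct and shares the same opening move as the paper---pass to a subsequence along which the normalized discs of $f$ converge to an Ahlfors current $T$ on $\mathcal{A}$ and invoke Proposition~\ref{diffuse in Abelian surface}---but the two proofs diverge in how diffuseness is pushed down to $T_2$. You identify $T_2$ explicitly as $K'^{-1}(\pi_2)_*T$ via the precise asymptotic $A_2(r)=K\pi r^2+O(r)$ obtained from the harmonic representative and Stokes, and then transfer diffuseness through a Lelong-number computation under the generically finite map $\pi_2$. The paper instead avoids pushforward entirely: from $\pi_2^*\omega_{\FS}\leqslant\K\cdot\omega_{\mathcal{A}}$ it gets the pointwise area comparison $\area\big(f_2(\D_{r})\cap U\big)_{\omega_{\FS}}\leqslant\K\cdot\area\big(f(\D_{r})\cap\pi_2^{-1}(U)\big)_{\omega_{\mathcal{A}}}$, and since $T$ charges no mass along $\pi_2^{-1}(C)$ the right-hand side is $o(r^2)$ as $U$ shrinks; the lower bound on the $\omega_{\FS}$-area comes simply from $T(\pi_2^*\omega_{\FS})=T(\omega)>0$ using closedness of $T$, without your explicit Stokes computation. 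The paper's route is more elementary and self-contained; yours buys the additional structural identification $T_2=K'^{-1}(\pi_2)_*T$. One small sharpening of your final step: rather than matching generic Lelong numbers along components of $\pi_2^{-1}(C)$, it is cleaner to note that by Siu's theorem and diffuseness the set $\{x\in\mathcal{A}:\nu(T,x)>0\}$ is countable, so for $y\in C$ outside its (countable) $\pi_2$-image and where $\pi_2$ is \'etale one has directly $\nu((\pi_2)_*T,y)=\sum_{x\in\pi_2^{-1}(y)}\nu(T,x)=0$.
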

	
	It is interesting to see that $f_2$ is tangent to a multi-valued vector field
	induced by the push-forward of the constant vector field $(1, \lambda)$ on $\mathcal{A}$.
	
	\begin{proof}
	Assume that $T_2$ is obtained by an increasing radii $\{r_i\}_{i\geqslant 1} \nearrow \infty$.
	By our chosen metric $\omega_{\mathcal{A}}$, the ``Length-Area'' condition of Ahlfors' lemma is automatically satisfied, thus by  passing to some subsequence
	$\{r_{i_k}\}_{k\geqslant 1}$
	we can receive an Ahlfors current $T$ of $f$.

Noting that $f^*\omega=K \sqrt{-1}\dif z\wedge\dif \overline{ z}
> 0$,
	by closeness of $T$
	and by $\area(f(\D_{r}))_{\omega_{\mathcal{A}}}=\K\cdot r^2$, we  receive 
	 \[
	T(\pi_2^*\omega_{\FS})=
	T(\omega)\geqslant \K
	>0.
	\]
	By the construction of $T$,
we receive that
	\[
	\area\big(f_2(\D_{r_{i_k}})\big)_{\omega_{\FS}}\geqslant \K\cdot r_{i_k}^2 \qquad
	{\scriptstyle(k\,\gg\, 1)}.
	\]
	
	Fix some
	$\K$ such that $\pi_2^*\omega_{\FS}\leqslant \K\cdot \omega_{\mathcal{A}}$. For any irreducible curve $C\subset \mathbb{P}^2(\mathbb{C})$, for any open neighborhood $U$ of $C$,
	we have
	\[
	\area\big(f_2(\D_{{r_{i_k}}})\cap U\big)_{\omega_{\FS}}
	=
	\area\big(f(\D_{r_{i_k}})\cap \pi_2^{-1}(U)\big)_{\pi_2^*\omega_{\FS}}
	\leqslant 
	\K\cdot
	\area\big(f(\D_{r_{r_{i_k}}})\cap \pi_2^{-1}(U)\big)_{\omega_{\mathcal{A}}}.
	\]
	Since $T$ charges no mass along $\pi_2^{-1}(C)$ by Proposition~\ref{diffuse in Abelian surface},  by shrinking $U$,
	the right-hand-side above  is
	$\leqslant 
	o(1)\cdot r_{i_k}^2$.
	Thus $T_2$ charges no mass along $C$. Since $C$ is arbitrary, 
	we conclude the proof.
	\end{proof}

\subsection{Singular Nevanlinna currents on $X$}
\label{Singular Nevanlinna currents on X}
Replacing ``Ahlfors currents'' by ``Nevanlinna currents'' in Observations~\ref{obs 6.1} -- \ref{obs 6.6},
the same statements still hold true by much the same arguments. Indeed, every upper or lower bound about  $\int_{\mathbb{D}(\lambda,\epsilon)}
f^*
\omega_X$ or $\int_{\D_{r_i}}
f^*
\omega_X$ has a corresponding one
about order function.
A remaining technical detail we would like to mention is the following

\begin{obs}
	For every $\ell\geqslant 1$,
	there exists some positive $\beta_\ell<1$ 
	such that, for $j\gg 1$ and $i=Z^{\mathbb{Z}_+}_j$, 
	\[
	|\Lambda\cap \A_{r_i}
	\cap \D_{\beta_\ell r_i}\cap \pi_0^{-1}([y_{\ell}])
	|
	\geqslant
	\K_{\ell}
	\cdot
	r_i^2.
	\]
\end{obs}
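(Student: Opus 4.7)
The strategy is a straightforward pigeonhole argument combining the Algorithm's quantitative assignment of $x_\mu = y_\ell$ with elementary lattice-point counting.

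First I would unpack what the Algorithm guarantees. For $i = Z^{\mathbb{Z}_+}_j$ with $j\gg 1$, the label $I = \mathbb{Z}_+$ falls into Cases III or III', and both ensure that
\[
\nu_i \;:=\; \bigl| \{\mu \in \A_{r_i}\cap c\Gamma : x_\mu = y_\ell\} \bigr| \;\geqslant\; \bigl\lfloor \tfrac{\alpha_\ell}{2}\cdot |\A_{r_i}\cap c\Gamma|\bigr\rfloor.
\]
Standard Gauss-circle counting gives $|\A_{r_i}\cap c\Gamma| = \tfrac{3\pi}{4}\cdot r_i^2/c^2 + O(r_i/c)$, whence $\nu_i \geqslant \K_\ell \cdot r_i^2$ for $r_i$ large. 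Each such $\mu$ produces $\lambda := \mu + y_\ell \in \Lambda$ with $\pi_0(\lambda) = [y_\ell]$, since $\mu \in c\Gamma \subset \Gamma$ and $y_\ell\in\mathcal{D}$ lies in the fundamental domain.

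Next I would confine a positive fraction of these $\mu$'s to a smaller disc. The number of points of $c\Gamma$ in the thin outer annulus $\{\beta\, r_i < |\mu| \leqslant r_i\}$ is at most $(1-\beta^2)\pi\cdot r_i^2/c^2 + O(r_i/c)$. Choose $\beta_\ell \in (\tfrac{1}{2}, 1)$ close enough to $1$ that $(1-\beta_\ell^2)\pi < \tfrac{3\pi \alpha_\ell}{16}$, a constraint depending only on $\alpha_\ell$. Then the outer count is strictly less than $\nu_i/2$ for all large $i$, and pigeonhole gives at least $\nu_i/2 \geqslant \K_\ell \cdot r_i^2$ of the marked $\mu$'s satisfying $r_i/2 \leqslant |\mu| \leqslant \beta_\ell r_i$.

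Finally I would transfer the conclusion from $\mu$ to $\lambda = \mu + y_\ell$. Since $|y_\ell| \leqslant \sqrt{2}$, these $\lambda$ lie in $\{r_i/2 - \sqrt{2} \leqslant |\lambda| \leqslant \beta_\ell r_i + \sqrt{2}\}$; replacing $\beta_\ell$ by $\beta_\ell' := (1+\beta_\ell)/2 \in (\beta_\ell, 1)$ absorbs the $\sqrt{2}$ shift once $r_i$ is large, so each such $\lambda$ belongs to $\A_{r_i} \cap \D_{\beta_\ell' r_i}$. Moreover, by the rapid growth $r_{i+1} \geqslant r_i^4$, no element of $\Lambda$ stemming from $B_{r_{i'}}$ with $i' \neq i$ can lie in $\A_{r_i}$, so the count is an honest lower bound on $|\Lambda \cap \A_{r_i} \cap \D_{\beta_\ell' r_i}\cap \pi_0^{-1}([y_\ell])|$. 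The only real subtlety is this $O(1)$ boundary correction, which is handled trivially by inflating $\beta_\ell$; the rest is elementary pigeonhole and lattice-point counting.
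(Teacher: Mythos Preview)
Your argument is correct and follows essentially the same approach as the paper: both combine the Algorithm's lower bound $|\Lambda\cap\A_{r_i}\cap\pi_0^{-1}([y_\ell])|\geqslant \K_\ell\cdot r_i^2$ with the elementary upper bound $|\Lambda\cap\A_{r_i}\setminus\D_{\beta r_i}|\leqslant \K\cdot(1-\beta)\,r_i^2$ and then choose $\beta_\ell$ close enough to $1$ by pigeonhole. You supply more explicit detail (Gauss-circle constants, the $O(1)$ shift from $\mu$ to $\lambda$), while the paper's proof is a terse two-line version of the same idea.
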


It will be helpful to show that certain  Nevanlinna currents of $f$ charge positive mass along $\mathbb{P}^1_{[y_\ell]}$.

\begin{proof}
	Note that for $j\gg 1$ we have
	$
	|\Lambda\cap \A_{r_i}
	\cap  \pi_0^{-1}([y_{\ell}])
	|
	>
	\frac{\alpha_{\ell}}{3}
	\cdot 
	\K
	\cdot
	r_i^2
	$. Moreover, for any fixed $\beta<1$,
	for $j\gg 1$, we have
	$
	|\Lambda\cap \A_{r_i}
	\setminus
	\D_{\beta r_i})|
	\leqslant \K\cdot (1-\beta) r_i^2
	$. By these two estimates, we can conclude the proof.
	\end{proof}

Therefore we can replace  ``Ahlfors currents'' by ``Nevanlinna currents'' in the statements of Theorems~\ref{thm 1},~\ref{thm 2}. Also, by much the same proofs,
Propositions~\ref{diffuse in Abelian surface}, \ref{diffuse Ahlfors currents in Cp2} also hold true for Nevanlinna currents.

\subsection{Singular Ahlfors currents on blow-ups of $X$}
We sketch a  construction of elaborate (in the sense of cohomology classes) singular Ahlfors currents,
 on the blow-ups of $X$ having Picard numbers $\geqslant 3$.

For any given positive integer $n\geqslant 1$, recall the collection of points  $y_1, \dots, y_n$ given in~\eqref{choose y_i}, let
 $\mathcal{X}$ be
the blow-up of $X$ at these points with the corresponding exceptional divisors  $E_1, \dots, E_n$.
Let $\mathsf{p}: \mathcal{X}\rightarrow X$ be the projection.
We now use the section $\psi^2\cdot s_m$ instead of $\psi\cdot s_m$
to induce an entire curve $\mathsf{f}:\mathbb{C}\longrightarrow X$. By lifting we thus receive an entire curve $\zeta: \mathbb{C}\longrightarrow \mathcal{X}$. We   strengthen our choices of $m, c$ in~\eqref{key choices of m, k}
by the condition $m\cdot\alpha-2\K/c^2
>0$, to make sure that the same clustering phenomenon as in Subsection~\ref{f(z) near infinity curve} 
 holds true
for $\mathsf{f}$
and
$\mathcal{C}_{\infty}$.
Let $e_i$ be the intersection point of the strict transformation $\widetilde{\mathcal{C}_0}$ of $\mathcal{C}_0$ with $E_i$ ($i=1,\dots, n$).
The purpose of using $\psi^2$ instead of $\psi$ is to make sure that, for $\lambda\in \Lambda$ with $[\lambda]=[y_i]$,
we have the certain value $\zeta(\lambda)=e_i$.

It is well-known that,
there exist some  hermitian metrics $h_{i}$ of the line bundles $\mathcal{O}(-E_i)$ and some small positive constant $\epsilon_2\ll 1$
such that
$\omega_{\mathcal{X}}:=\mathsf{p}^*\omega_X+\epsilon_2\sum_{\ell=1}^{n}\Theta_{h_{\ell}
}
$
is a K\"ahler form on $\mathcal{X}$
(c.f. \cite[Proposition 3.24]{Voisin2007-I}).
Moreover,
comparing the lifting
$\zeta: \mathbb{C}\longrightarrow \mathcal{X}$ with $\mathsf{f}: \mathbb{C}\longrightarrow X$,
we have
\[
T_{\zeta,r}(\omega_{\mathcal{X}})
:=
\int_{1}^r\frac{\dif t}{t}\int_{\D_t}
\zeta^*\omega_{\mathcal{X}}
\leqslant 
T_{\mathsf{f},r}(\omega_X)+O(1),
\]
(c.f. \cite[page~64, Observation 2.5.1]{Huynh2016}).
Thus we can use the same arguments for~\eqref{disc image area is bounded} to conclude that
\[
\int_{\D_{2r_i}}{\zeta}^*\omega_{\mathcal{X}}\leqslant\K\cdot r_i^2.
\]

For $\lambda\in \Lambda$ with $[\lambda]=[y_i]$,
computing in local coordinates around $e_i$,
for any small  $\epsilon>0$,
for any open neighborhood $U$ of $E_i$,
assuming further that $|\lambda|\gg 1$, then the area of $\zeta(\D(\lambda, \epsilon))\cap U$
is uniformly positively bounded (independent of $U$ and $\epsilon$) from below by using Propositions ~\ref{the most difficult estimate of psi} and~\ref{curve in ball area}.

Therefore, as an analogue of Observation~\ref{obs 6.3},
	for the finite subset $I=\{1, \dots, n\}$,
 from the sequence of radii $\{r_{Z^{I}_{2j-1}}\}_{
	j\geqslant 1}$, after a perturbation and passing to a subsequence, we can receive an Ahlfors current 
\[
T=a_{\infty}\cdot[
\widetilde{\mathcal{C}_{\infty}}]+
\sum_{\ell=1}^n
a_{{\ell}}\cdot
[\widetilde{\mathbb{P}^1_{[y_{\ell}]}}]
+
\sum_{\ell=1}^n
b_{{\ell}}\cdot
[E_i],
\]
where $\widetilde{\mathcal{C}_{\infty}}$, $\widetilde{\mathbb{P}^1_{[y_{\ell}]}}$  stand for the strict transformations of $\mathcal{C}_{\infty}$, $\mathbb{P}^1_{[y_{\ell}]}$, and where $a_{\infty}, a_{\ell}, b_{\ell}>0$ ($\ell=1, \dots, n$) are some positive numbers.

Similarly, we have the counterparts of  other Observations~\ref{obs 6.1} -- \ref{obs 6.6}.

\begin{center}
	\bibliographystyle{alpha}
	\bibliography{article}
\end{center}

\end{document}